\DeclareFontFamily{U}{wncy}{}
\DeclareFontShape{U}{wncy}{m}{n}{<->wncyr10}{}
\DeclareSymbolFont{mcy}{U}{wncy}{m}{n}
\DeclareMathSymbol{\shift}{\mathbin}{mcy}{"58}
\newtheorem{thm}{Theorem}[section]
\newtheorem*{thm*}{Theorem}
\newtheorem{lem}[thm]{Lemma}
\theoremstyle{definition} \newtheorem{defn}[thm]{Definition}
\newtheorem{ex}[thm]{Example}
\newtheorem*{lem*}{Lemma}
\newtheorem{rem}[thm]{Remark}
\newtheorem*{conj*}{Conjecture}
\newtheorem{cor}[thm]{Corollary}
\newtheorem{conj}[thm]{Conjecture}
\newtheorem*{cor*}{Corollary}
\newtheorem*{warn*}{Warning}
\newcommand{\C}{\mathcal{C}}
\newcommand{\N}{\mathbb{N}}
\newcommand{\Z}{\mathbb{Z}}
\newcommand{\D}{\mathcal{D}}
\newcommand{\Q}{\mathbb{Q}}
\newcommand{\B}{\mathcal{B}}
\renewcommand{\dim}{\operatorname{dim}}
\renewcommand{\epsilon}{\varepsilon}
\newcommand{\Gr}{\operatorname{Gr}}
\newcommand{\st}{\operatorname{st}}
\newcommand{\rank}{\operatorname{rank}}
\renewcommand{\P}{\mathcal{P}}
\renewcommand{\Q}{\mathcal{Q}}
\renewcommand{\th}{\textsuperscript{th}\,}
\newcommand{\eqdef}{\overset{\text{def}}{=}}
\newcommand{\height}{\operatorname{ht}}
\newcommand{\touch}{\operatorname{tch}}
\begin{document}

\date{\today}
\author{Brendan Pawlowski}
\thanks{The author was partially supported by grant DMS-1101017 from the NSF}
\title{Catalan matroid decompositions of certain positroids}

\begin{abstract}
A \emph{positroid} is the matroid of a matrix whose maximal minors are all nonnegative. Given a permutation $w$ in $S_n$, the matroid of a generic $n \times n$ matrix whose non-zero entries in row $i$ lie in columns $w(i)$ through $n+i$ is an example of a positroid. We enumerate the bases of such a positroid as a sum of certain products of Catalan numbers, each term indexed by the $123$-avoiding permutations above $w$ in Bruhat order. We also give a similar sum formula for their Tutte polynomials. These are both avatars of a structural result writing such a positroid as a disjoint union of matroids, each isomorphic to a direct sum of Catalan matroids and a matroid with one basis.
\end{abstract}

\maketitle

\section{Introduction}

Given a permutation $w \in S_n$, consider a generic $n \times 2n$ matrix $M_w$ whose nonzero entries in row $i$ are in columns $[w(i), i+n]$. Here $[a,b]$ denotes $\{a, a+1, \ldots, b\}$ for integers $a$ and $b$; we also write $[n]$ for $[1,n]$. For example,
\begin{equation*}
M_{2143} = \begin{bmatrix}
0 & * & * & * & * & 0 & 0 & 0\\
* & * & * & * & * & * & 0 & 0\\
0 & 0 & 0 & * & * & * & * & 0\\
0 & 0 & * & * & * & * & * & *
\end{bmatrix}.
\end{equation*}
Let $\P_w$ be the set of bases of the matroid associated to $M_w$. That is, $\P_w$ is the set of $I \in {[2n] \choose n}$ such that the $n \times n$ minor of $M_w$ in rows $[n]$ and columns $I$ is nonzero.

The matroid $\P_w$ belongs to (at least) two interesting classes of matroids. First, it is a \emph{transversal matroid}; see \cite{bonin-transversal-matroids} for an introduction. Take a collection $A = \{A_1, \ldots, A_n\}$ of finite sets. A \emph{transversal} of $A$ is a set $\{x_1, \ldots, x_n\}$ such that $x_i \in A_i$ for each $i$ and all the $x_i$ are distinct. The set of all transversals of $A$ is the set of bases for a matroid. Indeed, if $M_A$ is a generic matrix with $n$ rows whose nonzero entries in row $i$ are in columns $j \in A_i$, then the matroid of $M_A$ is exactly the transversal matroid of $A$. Thus, $M_w$ is the transversal matroid of the set collection $\{[w(i), i+n] : 1 \leq i \leq n\}$.

Second, $\P_w$ is a \emph{positroid}: the matroid of a real matrix whose maximal minors are all nonnegative. Let $\Gr(k, N)$ be the Grassmann variety of $k$-planes in $\mathbb{C}^N$. Given a rank $k$ positroid $P$ on $[N]$, Knutson, Lam, and Speyer considered the closure of the locus of points in $\Gr(k, N)$ having matroid $P$ \cite{positroidjuggling}. Among other nice properties, these \emph{positroid varieties} turn out to be exactly the images of Richardson varieties in the complete flag variety under the projection to $\Gr(k,N)$.

Given any set of intervals $S = \{[a_1, b_1], \ldots, [a_k, b_k]\}$ in $[N]$, taking the rowspans of matrices of the form $M_S$ gives a subset of $\Gr(k,N)$ whose closure is an irreducible variety called a \emph{rank variety}. Billey and Coskun showed that rank varieties are exactly the images of Richardson varieties under the projection from the variety of \emph{partial} flags $F_1 \subseteq \cdots \subseteq F_k \subseteq \mathbb{C}^N$, where $\dim F_i = i$   \cite{rank-varieties}. Every rank variety is therefore a positroid variety, and in particular, $\P_w$ is a positroid.

Our main results concern the size and structure of $\P_w$. An \emph{anti-fixed point} of $w \in S_n$ is a number $i \in [n]$ such that $w(i) = n-i+1$. Define a permutation statistic
\begin{equation*}
g(w) = C_{\ell_1+1} \cdots C_{\ell_k+1},
\end{equation*}
where $\ell_1, \ldots, \ell_k$ are the lengths of the maximal runs of consecutive anti-fixed points in $w$, and $C_j$ is the $j$\th Catalan number. For example, in $w = 8697\underline{5}34\underline{21}$ we have underlined the maximal runs of anti-fixed points, and $g(w) = C_2 C_3 = 10$. Write $\leq$ for the strong Bruhat order on $S_n$.

\begin{thm} \label{thm:Pw-formula}
$\P_w$ has size \hspace{-0.5cm} $\displaystyle \sum_{\substack{v \geq w \\ \text{$v$ avoids $123$}}} \hspace{-0.3cm}  g(v)$ for any $w \in S_n$.
\end{thm}

Here, a permutation \emph{avoids} $123$ if it has no (not necessarily consecutive) increasing subsequence of length $3$. In the special case that $w = w_0 = n(n-1)\cdots 1$, Theorem~\ref{thm:Pw-formula} reads $\#\P_{w_0} = C_{n+1}$. In fact, $\P_{w_0}$ is isomorphic to the rank $n+1$ Catalan matroid $\C_{n+1}$ defined by Ardila, whose bases are the Dyck paths of length $2n+2$, each path viewed as the set of its upsteps \cite{catalan-matroid}.

Theorem~\ref{thm:Pw-formula} arises from a stronger structural result for $\P_w$ (cf. Theorem~\ref{thm:matroid-structure} below).

\begin{thm} \label{thm:Pw-structure} There is a partition of ${[2n] \choose n}$ into sets $\Q_v$ indexed by $123$-avoiding permutations $v$ such that for any $w \in S_n$,
\begin{itemize}
\item $\P_w$ is the disjoint union $\displaystyle \hspace{-0.4cm} \bigsqcup_{\substack{v \geq w \\ \text{$v$ avoids $123$}}} \hspace{-0.35cm} \Q_v$
\item If $v$ has runs of consecutive anti-fixed points of lengths $\ell_1, \ldots, \ell_k$, then $\Q_v$ is isomorphic to a direct sum of the Catalan matroids $\C_{\ell_1+1}, \ldots, \C_{\ell_r+1}$ plus a matroid with one basis. In particular, $\#\Q_v = g(v)$.
\end{itemize}
\end{thm}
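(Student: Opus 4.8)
The plan is to describe $\P_w$ combinatorially, use that description to attach to every $I \in \binom{[2n]}{n}$ a canonical $123$-avoiding permutation $v(I)$, and then verify the two bullets in turn. Since $M_w$ presents the transversal matroid of the intervals $[w(i), i+n]$, whose right endpoints $n+1 < n+2 < \cdots < 2n$ are already in increasing order, $I$ is a basis of $\P_w$ exactly when the bipartite graph joining $j \in I$ to each row $i$ with $j \in [w(i), i+n]$ has a perfect matching. I would isolate the two features that make this tractable: a column $n+k$ with $1 \le k \le n$ is joined to exactly the rows $\{k, k+1, \ldots, n\}$, independently of $w$; while a column $j \le n$ is joined to the rows $w^{-1}(\{1, \ldots, j\})$. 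Feeding this into Hall's theorem (deficiency form, for interval bipartite graphs) produces a finite list of inequalities characterizing membership in $\P_w$, in which the dependence on $w$ enters only through the initial segments $w^{-1}(\{1, \ldots, j\})$ --- exactly the data governing Bruhat order through the tableau criterion.

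First I would extract from these inequalities that passing up in Bruhat order only destroys bases, i.e.\ $w' \le w$ implies $\P_w \subseteq \P_{w'}$; hence $D_I := \{w : I \in \P_w\}$ is an order ideal of the Bruhat order on $S_n$, and it is nonempty since $\P_e$ is free (the identity gives the staircase $\{[i, i+n]\}$, every $n$-subset of which is a system of distinct representatives). The crux of the first half is then to show that $D_I$ is the principal order ideal generated by a single permutation $v(I)$ which is moreover $123$-avoiding; I would do this by reading the Bruhat-largest permutation compatible with the Hall inequalities directly off the sorted elements of $I$, and checking that those inequalities force its one-line notation to be a shuffle of two decreasing sequences. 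Granting this, define $\Q_v = \{I : v(I) = v\}$. The sets $\Q_v$ partition $\binom{[2n]}{n}$ by construction, and $\P_w = \{I : w \in D_I\} = \{I : w \le v(I)\} = \bigsqcup_{v \ge w,\ v\text{ avoids }123} \Q_v$, which is the first bullet.

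For the second bullet, fix a $123$-avoiding $v$ whose maximal runs of consecutive anti-fixed points occupy row-sets of sizes $\ell_1, \ldots, \ell_k$. I expect each such run to cut out a block $E_j$ of the ground set $[2n]$ --- a ``diamond'' --- with the blocks $E_1, \ldots, E_k$ pairwise disjoint and their complement $E_0$ a residual block. The plan is to unwind the Hall inequalities for $w = v$ together with the defining requirement $v(I) = v$ (equivalently: $I \in \P_v$, but $I \notin \P_u$ whenever $u \not\le v$), and deduce that (i) every $I \in \Q_v$ meets each block in a prescribed number of elements, with the elements in $E_0$ completely forced, giving the one-basis matroid of loops and coloops; (ii) inside the diamond $E_j$ the surviving sets are exactly the ``Dyck paths'' across it, so $\Q_v$ restricted to $E_j$ is the Catalan matroid $\C_{\ell_j + 1}$ --- the base case being $v = w_0$, a single diamond equal to all of $[2n]$ with $\P_{w_0} \cong \C_{n+1}$; and (iii) the constraints decouple across blocks, so $I \in \Q_v$ if and only if $I \cap E_j$ is a basis of the $j$th factor for every $j$, i.e.\ $\Q_v$ really is the direct sum claimed. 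The count $\#\Q_v = g(v)$ is then immediate, and summing over $v \ge w$ gives Theorem~\ref{thm:Pw-formula}.

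The main obstacle is the pair of structural facts that set these interval systems apart from arbitrary transversal matroids: that $D_I$ is a principal ideal with a $123$-avoiding generator, and that $\Q_v$ splits as a direct sum of Catalan matroids. Both trace back to the same phenomenon --- that within a run of anti-fixed points the Hall inequalities degenerate exactly into the Dyck-path condition and are otherwise rigid --- so I would aim to prove a single combinatorial lemma about lattice paths in a staircase region (viewing an $n$-subset of $[2n]$ as a path from $(0,0)$ to $(n,n)$) and derive both halves from it. The genuinely delicate part is the bookkeeping of how the runs, their diamonds, and the residual block interlock for an arbitrary $123$-avoiding $v$, and making the decoupling in (iii) precise.
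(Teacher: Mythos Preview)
Your overall architecture matches the paper's: attach to each $I$ a maximum element of $W_I = \{w : I \in \P_w\}$, show it is $123$-avoiding, set $\Q_v$ to be the fiber over $v$, and then identify the structure of each $\Q_v$. The paper packages the first step via positroid machinery (juggling sequences) rather than Hall's theorem, but the content is the same: it builds an injective word $v_I$ with $I \in \P_w \iff v_I \geq w$, and your Hall inequalities would produce the same nested sets $B_j(I)$ that yield $v_I$.

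There is one real point you gloss over in the first half. You assert that $D_I$ is a \emph{principal} order ideal by ``reading the Bruhat-largest permutation \ldots\ directly off the sorted elements of $I$''. But $v_I$ is only an injective word, not a permutation, and the Bruhat order on injective words is not a lattice, so one must actually argue that $v_I$ and $w_0$ have a greatest lower bound. The paper devotes a separate lemma (Lemma~\ref{lem:word-meet}) to constructing this meet $u_I$ explicitly and verifying it; you would need something equivalent.

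The more substantive divergence is in the second bullet. You propose to establish the direct-sum description of $\Q_v$ by a direct two-sided argument (``unwind the Hall inequalities \ldots\ and deduce (i), (ii), (iii)''). The paper does \emph{not} do this. It proves only the containment $\tilde{\Q}_v \subseteq \Q_v$ (where $\tilde{\Q}_v$ is the explicit direct sum), and then closes the argument by a \emph{counting identity}: since the $\Q_v$ partition $\binom{[2n]}{n}$ and $\sum_v \#\tilde{\Q}_v = \sum_v g(v) = \binom{2n}{n}$ (this is Corollary~\ref{cor:123-avoiding-identity}, proved independently in Section~\ref{sec:standardization} via Krattenthaler's bijection and the standardization map on lattice paths), the containments must all be equalities. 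So the paper's proof of Theorem~\ref{thm:Pw-structure} genuinely relies on the Dyck-path identity from Section~\ref{sec:standardization}, whereas your plan, if completed, would make that section superfluous. Your route is cleaner in principle but you have not supplied the ``single combinatorial lemma'' that does the work, and you yourself flag the decoupling in (iii) as the delicate part; the paper simply sidesteps that difficulty.
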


In Section~\ref{sec:standardization}, we use a bijection of Krattenthaler between 123-avoiding permutations and Dyck paths to prove Theorem~\ref{thm:Pw-formula} in the case where $w$ is the identity permutation. This special case will be useful in proving Theorem~\ref{thm:Pw-structure}, which we do in Section~\ref{sec:Pw-structure}. In Section~\ref{sec:tutte-polynomial}, we give a formula for the Tutte polynomial of $\P_w$ along the lines of Theorem~\ref{thm:Pw-formula}. Section~\ref{sec:diagram-matroids} concludes with some conjectures about a related family of matroids also indexed by permutations.

\subsection*{Acknowledgements}

I would like to thank Sara Billey, Zach Hamaker, Vic Reiner, Jose Samper, Jair Taylor, and Alex Woo for helpful comments and discussions.

\section{Standardizing lattice paths to Dyck paths}
\label{sec:standardization}

Given a positive integer $n$, a \emph{Dyck path} of length $2n$ is a lattice path from $(0,0)$ to $(2n,0)$ which only uses steps $(1,1)$ (\emph{upsteps}) or $(1,-1)$ (\emph{downsteps}), and which never goes below the line $y = 0$. Let $\D_n$ be the set of Dyck paths of length $2n$. It is well-known that $\#\D_n$ is the $n$\th Catalan number $C_n$, and that this is also the number of $123$-avoiding $w \in S_n$.

If $w \in S_{2n}$ is the identity permutation, then $\P_w = {[2n] \choose n}$. In this case, Theorem~\ref{thm:Pw-formula} reads
\begin{equation} \label{eq:123-avoiding-identity}
\sum_{\substack{v \in S_n\\ \text{$v$ avoids $123$}}} C_{\ell_1+1}\cdots C_{\ell_k+1} = {2n \choose n},
\end{equation}
where $\ell_1, \ldots, \ell_k$ are the lengths of runs of anti-fixed points of each $v$.

Here is a similar identity for Dyck paths. We can view any $I \in {[2n] \choose n}$ as a lattice path from $(0,0)$ to $(2n,0)$ by taking one step for each $i = 1, 2, \ldots, 2n$, either $(1,1)$ or $(1,-1)$ depending on whether $i \in I$ or $i \notin I$. We say such a lattice path has a \emph{peak} at step $i$ if step $i$ is an upstep and step $i{+}1$ is a downstep. The \emph{height} of an upstep $i$ in a Dyck path is the $y$-coordinate of its endpoint; that is, the number of upsteps (weakly) before $i$ minus the number of downsteps before $i$. By the height of a peak $i$ we will mean the height of the corresponding upstep.

\begin{defn}
A \emph{saw} in a lattice path is a maximal consecutive sequence of height $1$ peaks.
\end{defn}
Here, two peaks are \emph{consecutive} if their upsteps occur in positions $i$ and $i+2$ for some $i$. The following identity will be the Dyck path analogue of \eqref{eq:123-avoiding-identity}.
\begin{lem} \label{lem:saw-identity} For any $n$,
\begin{equation*} 
\sum_{D \in \D_n} C_{\ell_1+1}\cdots C_{\ell_k+1} = {2n \choose n},
\end{equation*}
where $2\ell_1, \ldots, 2\ell_k$ are the lengths of the saws of each Dyck path $D$.
\end{lem}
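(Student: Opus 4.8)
The plan is to exhibit a bijection that "standardizes" arbitrary lattice paths from $(0,0)$ to $(2n,0)$ with $n$ up-steps (equivalently, the sets $I \in \binom{[2n]}{n}$) to Dyck paths, in such a way that the fiber over a given Dyck path $D$ has size exactly $C_{\ell_1+1}\cdots C_{\ell_k+1}$, where $2\ell_1, \ldots, 2\ell_k$ are the saw lengths of $D$. Since $\binom{[2n]}{n}$ has size $\binom{2n}{n}$, this immediately gives the identity. The natural candidate is the following operation: given a lattice path $P$, repeatedly locate the first step at which $P$ goes below $y = 0$ — say this is a down-step whose endpoint is at height $-1$ — and "reflect" or "cyclically rotate" some initial portion of the path to push it back up. A cleaner way to package this is via the classical cycle lemma / reflection principle, but we want the fibers to have a controlled product-of-Catalan-numbers structure, so the bijection must be chosen carefully.

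First I would pin down the correct normal form. Given any $P \in \binom{[2n]}{n}$, I claim there is a canonical Dyck path $\st(P)$ obtained as follows: scan $P$ from left to right; whenever the path first dips to height $-1$ via a down-step $d$, find the most recent up-step $u$ before $d$ that is a "low" up-step (one whose starting height is $0$), and swap the roles so that what was below level $0$ gets lifted. Iterating, one arrives at a Dyck path. The key structural observation will be: the only freedom in reconstructing $P$ from $\st(P)$ is located exactly at the height-$1$ peaks of $\st(P)$ — more precisely, within each saw of $\st(P)$ consisting of $\ell$ consecutive height-$1$ peaks (spanning $2\ell$ steps, i.e.\ a sequence of $\ell$ "up-down" pairs all touching level $0$), the original path $P$ could have had, in that region, any sub-lattice-path that starts and ends at height $0$, uses $\ell$ up-steps and $\ell$ down-steps, but is allowed to go below $0$ — and the number of such sub-paths of length $2\ell$ returning to $0$ is $\binom{2\ell}{\ell} - \binom{2\ell}{\ell-1}$... no: rather, the count that comes out is $C_{\ell+1}$. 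I would verify this count directly: the number of lattice paths of length $2\ell$ from $0$ to $0$ (with no positivity constraint) that "standardize" into the all-height-$1$-peaks pattern is $C_{\ell+1}$; this is a small self-contained Catalan identity, provable by a further bijection with Dyck paths of length $2(\ell+1)$ (e.g.\ prepend an up-step and append a down-step and check the constraint).

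The key steps, in order: (1) define $\st: \binom{[2n]}{n} \to \D_n$ precisely via the iterated local correction, and prove it is well-defined (terminates, lands in $\D_n$); (2) prove that the non-saw portions of a Dyck path $D$ are "rigid," i.e.\ any $P$ with $\st(P) = D$ agrees with $D$ outside the regions occupied by saws — this is the heart of the argument and uses that outside saws, $D$ either stays strictly above level $0$ or its returns to $0$ are forced; (3) prove that within each saw of length $2\ell$, the possible local behaviors of $P$ are in bijection with some Catalan-counted set of size $C_{\ell+1}$, and that these choices are independent across distinct saws; (4) conclude $\#\st^{-1}(D) = \prod_i C_{\ell_i+1}$ and sum. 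The main obstacle I expect is step (2)/(3): making precise exactly which local perturbations of a Dyck path standardize back to it, and ruling out "interaction" between a saw and the surrounding path or between two nearby saws — one has to be careful that a dip below $0$ created in one saw's region is repaired locally and does not propagate. I would handle this by choosing the repair rule in $\st$ to be as local as possible (always correcting using the nearest available low up-step to the left) and then arguing by induction on $n$, peeling off the first return of $D$ to level $0$: if that first return happens at step $2$ (so $D$ starts with a height-$1$ peak, beginning a saw) one analyzes that saw and recurses on the rest; otherwise $D$ starts with an irreducible Dyck path of length $\geq 4$ whose interior stays above $0$, forcing $P$ to agree with $D$ there, and one recurses.
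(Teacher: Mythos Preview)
Your approach is the same as the paper's---define a standardization map $\st : \binom{[2n]}{n} \to \D_n$ and count fibers---but you have missed the clean one-shot definition that makes the argument short. The paper takes $\st(P)$ to be the Dyck path obtained by replacing each maximal segment of $P$ lying below the $x$-axis with a saw of the same length. No iteration, no local repair rule, no worry about interaction between saws: the map is defined globally in one stroke, and it is immediate that it lands in $\D_n$.

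With this definition the fiber analysis is essentially one line, and here your proposal has a genuine imprecision. You describe the freedom in a saw region of length $2\ell$ as ``any sub-lattice-path that starts and ends at height $0$ \ldots\ allowed to go below $0$,'' which would give $\binom{2\ell}{\ell}$ choices, not $C_{\ell+1}$. The correct constraint is that the sub-path must stay weakly below height~$1$: the parts of $P$ at or above the axis in that region must themselves be height-$1$ peaks (otherwise standardization would not reproduce the saw), while the parts below the axis can be arbitrary excursions. Paths of length $2\ell$ from $0$ to $0$ staying $\le 1$ are in bijection with Dyck paths of length $2\ell+2$ (prepend a down-step, append an up-step, and reflect), giving exactly $C_{\ell+1}$. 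Outside the saw regions $D$ stays at or above the axis and contains no height-$1$ peaks there, which forces $P=D$ on those steps; so the choices across different saws are automatically independent. Your steps~(2) and~(3), and the inductive peeling argument you sketch, are all rendered unnecessary once you adopt this definition of~$\st$ and identify the correct local constraint.
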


This identity is not hard to prove. Suppose $I \in {[2n] \choose n}$ is a lattice path. The \emph{standardization} of $I$ is the Dyck path $\st(I)$ obtained by replacing each maximal segment of $I$ below the $x$-axis with a saw of the same length.

\begin{ex}
If $n = 9$ and $I = \{1,5,6,7,9,10,14,16,17\}$, so $I$ is the lattice path
\begin{center}
\begin{tikzpicture}[scale=.3]
%python
%latticepath("UDDDUUUDUUDDDUDUUD")
\tikzstyle{pathnode} = [minimum size=1.1mm, inner sep=0pt, fill, circle] %%%inserted by pymacro
\draw[red, ultra thick] (2, 0) node {} -- (3, -1) node[pathnode] {} -- (4, -2) node[pathnode] {} -- (5, -1) node[pathnode] {} -- (6, 0) node {};
\draw (0,0) node[pathnode] {} -- (1, 1) node[pathnode] {} -- (2, 0) node[pathnode] {};
\draw[red, ultra thick] (12, 0) node {} -- (13, -1) node[pathnode] {} -- (14, 0) node {};
\draw[red, ultra thick] (14, 0) node {}  -- (15, -1) node[pathnode] {} -- (16, 0) node {};
\draw (6, 0) node[pathnode] {} -- (7, 1) node[pathnode] {} -- (8, 0) node[pathnode] {} -- (9, 1) node[pathnode] {} -- (10, 2) node[pathnode] {} -- (11, 1) node[pathnode] {} -- (12, 0) node[pathnode] {};
\draw (14, 0) node[pathnode] {};
\draw (16, 0) node[pathnode] {} -- (17, 1) node[pathnode] {} -- (18, 0) node[pathnode] {}; %%%inserted by pymacro
%%%
\end{tikzpicture}
\end{center}
then $\st(I)$ is
\begin{center}
\begin{tikzpicture}[scale=.3]
%python
%latticepath("UDUDUDUDUUDDUDUDUD")
\tikzstyle{pathnode} = [minimum size=1.1mm, inner sep=0pt, fill, circle] %%%inserted by pymacro
\draw[red, ultra thick] (2, 0) node {} -- (3, 1) node[pathnode] {} -- (4, 0) node[pathnode] {} -- (5, 1) node[pathnode] {} -- (6, 0) node {};
\draw[red, ultra thick] (12, 0) node {} -- (13, 1) node[pathnode] {} -- (14, 0) node {};
\draw[red, ultra thick] (14, 0) node {} -- (15, 1) node[pathnode] {} -- (16, 0) node {};
\draw (0,0) node[pathnode] {} -- (1, 1) node[pathnode] {} -- (2, 0) node[pathnode] {};
\draw (6, 0) node[pathnode] {} -- (7, 1) node[pathnode] {} -- (8, 0) node[pathnode] {} -- (9, 1) node[pathnode] {} -- (10, 2) node[pathnode] {} -- (11, 1) node[pathnode] {} -- (12, 0) node[pathnode] {};
\draw (14, 0) node[pathnode] {};
\draw (16, 0) node[pathnode] {} -- (17, 1) node[pathnode] {} -- (18, 0) node[pathnode] {}; %%%inserted by pymacro
%%%
\end{tikzpicture}
\end{center}
where we have indicated maximal segments below the $x$-axis and their replacements in $\st(I)$ with bold red.
\end{ex}

\begin{proof}[Proof of Lemma~\ref{lem:saw-identity}]
Suppose $D \in \D_n$ is a Dyck path with saws of lengths $2\ell_1, \ldots, 2\ell_k$. The set $\st^{-1}(D)$ then has size $C_{\ell_1+1} \cdots C_{\ell_k+1}$. Indeed, the members of $\st^{-1}(D)$ are obtained from $D$ by replacing each saw of length $2\ell_i$ with an arbitrary lattice path of the same length which starts and ends on the $x$-axis and stays below $y = 1$. Prepending a downstep and appending an upstep shows that such lattice paths are in bijection with Dyck paths of length $2\ell_i + 2$. Thus Lemma~\ref{lem:saw-identity} reflects the partition of ${[2n] \choose n}$ into the fibers of the standardization map.
\end{proof}

Given Lemma~\ref{lem:saw-identity}, the identity \eqref{eq:123-avoiding-identity} would follow from a bijection from $123$-avoiding permutations to Dyck paths which turns anti-fixed points into peaks of height $1$. In fact, Krattenthaler has defined such a bijection \cite{krattenthaler-123-avoiding-dyck-path-bijection}. For the moment, view Dyck paths as proceeding from the southwest corner of the square $[n] \times [n]$ to the northeast, and remaining above the southwest-northeast diagonal. There is a partial order on Dyck paths where $D_1 \leq D_2$ if $D_1$ lies between $D_2$ and the diagonal of the square. If $w \in S_n$ is $123$-avoiding, define $K(w)$ to be the reverse of the minimal Dyck path which is northwest of the \emph{graph} of $w$, i.e. the set of points $\{(i, w(i)) : i \in [n]\} \subseteq [n] \times [n]$.

\begin{ex}
Say $w = 6475312$. The graph of $w$ is represented using $\times$'s, while $K(w)$ is the path in bold:

\begin{center}
\begin{tikzpicture}[scale=.35]
\draw (0,0) -- (7,0);
\draw (0,1) -- (7,1);
\draw (0,2) -- (7,2);
\draw (0,3) -- (7,3);
\draw (0,4) -- (7,4);
\draw (0,5) -- (7,5);
\draw (0,6) -- (7,6);
\draw (0,7) -- (7,7);
\draw (0,0) -- (0,7);
\draw (1,0) -- (1,7);
\draw (2,0) -- (2,7);
\draw (3,0) -- (3,7);
\draw (4,0) -- (4,7);
\draw (5,0) -- (5,7);
\draw (6,0) -- (6,7);
\draw (7,0) -- (7,7);

\node at (5.5, 6.5) {$\times$};
\node at (3.5, 5.5) {$\times$};
\node at (6.5, 4.5) {$\times$};
\node at (4.5, 3.5) {$\times$};
\node at (2.5, 2.5) {$\times$};
\node at (0.5, 1.5) {$\times$};
\node at (1.5, 0.5) {$\times$};

\draw[very thick] (0,0) -- (0,2) -- (2,2) -- (2,3) -- (3,3) -- (3,6) -- (5,6) -- (5,7) -- (7,7);
\end{tikzpicture} \qquad 
\raisebox{1.1cm}{$\leadsto \qquad K(w) = $ \raisebox{-.2cm}{\begin{tikzpicture}[scale=0.3]
%python
%latticepath("UUDUUDDDUDUUDD")
\tikzstyle{pathnode} = [minimum size=1.1mm, inner sep=0pt, fill, circle] %%%inserted by pymacro
\draw (0,0) node[pathnode] {} -- (1, 1) node[pathnode] {} -- (2, 2) node[pathnode] {} -- (3, 1) node[pathnode] {} -- (4, 2) node[pathnode] {} -- (5, 3) node[pathnode] {} -- (6, 2) node[pathnode] {} -- (7, 1) node[pathnode] {} -- (8, 0) node[pathnode] {} -- (9, 1) node[pathnode] {} -- (10, 0) node[pathnode] {} -- (11, 1) node[pathnode] {} -- (12, 2) node[pathnode] {} -- (13, 1) node[pathnode] {} -- (14, 0) node[pathnode] {}; %%%inserted by pymacro
%%%
\end{tikzpicture}}}
\end{center}
\end{ex}

\begin{defn}
    A \emph{left-to-right minimum} of $w \in S_n$ is a position $i \in [n]$ such that $j < i$ implies $w(i) < w(j)$. A \emph{right-to-left maximum} is a position $i$ such that $j > i$ implies $w(i) > w(j)$.
\end{defn}

\begin{lem} \label{lem:LRmin-description}
Say $w \in S_n$ avoids $123$ and $j \in [n]$. Then $j$ is a left-to-right minimum if and only if $w(j) \leq n-j+1$, a right-to-left maximum if and only if $w(j) \geq n-j+1$, and an anti-fixed point if and only if it is both.
\end{lem}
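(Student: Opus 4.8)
The plan is to prove the three equivalences by unwinding the definitions of left-to-right minimum and right-to-left maximum for a $123$-avoiding permutation, and observing that avoiding $123$ forces the permutation to split into a decreasing sequence on its left-to-right minima and a decreasing sequence on the remaining positions. First I would record the basic structural fact: since $w$ avoids $123$, if $i$ is \emph{not} a left-to-right minimum, witnessed by some $j < i$ with $w(j) < w(i)$, then for $w$ to avoid an increasing subsequence of length $3$ there can be no $k > i$ with $w(k) > w(i)$, so $i$ is a right-to-left maximum. This shows every position is a left-to-right minimum, a right-to-left maximum, or both; equivalently, the left-to-right minima occupy one set of values and the right-to-left maxima occupy a complementary (overlapping only on anti-fixed points) set, and on each of these two sets $w$ is strictly decreasing.

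Next I would make the counting argument that pins down the threshold $n-j+1$. Suppose $j$ is a left-to-right minimum. The positions $1, 2, \ldots, j$ that precede or equal $j$ each have $w$-value at least $w(j)$ only if they too are left-to-right minima, but in any case I want an inequality on $w(j)$. The cleanest route: a left-to-right minimum $j$ has $w(j) < w(i)$ for all $i < j$, so the $j-1$ values $w(1), \ldots, w(j-1)$ are all strictly greater than $w(j)$; hence $w(j) \le n - (j-1) = n-j+1$. Conversely, if $j$ is not a left-to-right minimum, then (by the structural fact) $j$ is a right-to-left maximum, so $w(j) > w(i)$ for all $i > j$, giving $n-j$ values below $w(j)$ and thus $w(j) \ge n-j+1$; and in fact $w(j) > n-j+1$ because $j$ is not \emph{also} a left-to-right minimum, which by the symmetric version of the first computation would force $w(j) \le n-j+1$ with equality only in the anti-fixed case. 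Combining: $j$ is a left-to-right minimum $\iff w(j) \le n-j+1$, and by the mirror-image argument (or by applying the first equivalence to $w_0 w w_0$, which swaps the two notions) $j$ is a right-to-left maximum $\iff w(j) \ge n-j+1$. Then $j$ is both $\iff w(j) = n-j+1$, which is exactly the definition of an anti-fixed point.

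The one point needing a little care — and the place I expect the only real friction — is making sure the two strict/non-strict inequalities are matched up correctly so that the ``both'' case is precisely equality rather than, say, being vacuous or overcounted. The safest way to handle this is to prove the contrapositive direction of each equivalence carefully: show directly that $w(j) \le n-j+1$ \emph{implies} $j$ is a left-to-right minimum, by noting that if some $i < j$ had $w(i) < w(j)$ then among $w(1), \ldots, w(j)$ there would be at least $j - (n-j+1 - w(j))$ — here I would instead argue via the complementary count on right-to-left maxima to avoid off-by-one errors. Concretely: the values appearing in positions $\ge j$ that exceed $w(j)$ number at most $n - w(j)$, while if $j$ were not a right-to-left maximum there would be a position $> j$ with smaller value, and pairing this with the $123$-avoidance gives $w(j) \le n-j+1$ could only fail; I would reorganize this into two clean lemma-free paragraphs, one for each inequality, using only the decreasing-on-each-part structure. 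Once the structural dichotomy is in hand, each of the three claims is a one-line consequence, so the bulk of the writing is establishing that dichotomy and the counting bound $w(j) \le n-j+1$ for left-to-right minima.
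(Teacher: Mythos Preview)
Your proposal is correct and ultimately uses the same two ingredients as the paper: the dichotomy that in a $123$-avoiding permutation every position is a left-to-right minimum or a right-to-left maximum, and a pigeonhole count relating $w(j)$ to $n-j+1$. The difference is purely organizational. You begin with the easy implications (left-to-right minimum $\Rightarrow w(j)\le n-j+1$, and symmetrically), then try to deduce the converses from the dichotomy; you correctly notice this leaves the equality case $w(j)=n-j+1$ unresolved and propose to patch it by proving $w(j)\le n-j+1\Rightarrow$ left-to-right minimum directly. The paper simply does that harder implication first, by the pigeonhole argument you gesture at: if $w(j)\le n-j+1$ but some $i<j$ has $w(i)<w(j)$, then the $n-w(j)\ge j-1$ positions with value exceeding $w(j)$ must all lie in $[j-1]\setminus\{i\}$, which has only $j-2$ elements. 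Once that direction is in hand, the dichotomy gives the converse for free. Your detour through the easy direction first is harmless but unnecessary; committing to the direct pigeonhole argument from the start (as you eventually do) yields the paper's proof.
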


\begin{proof}
Suppose $w(j) \leq n-j+1$ but $j$ is not a left-to-right minimum, so there is $i < j$ with $w(i) < w(j)$. Since $w$ avoids $123$, every $k$ such that $w(j) < w(k)$ must be in $[j] \setminus \{i\}$. But there are at least $j$ such values of $k$ given that $w(j) \leq n-j+1$, so this is impossible by the pigeonhole principle. Likewise, if $w(j) \geq n-j+1$, then $j$ is a right-to-left maximum. Every entry of $w$ is either a left-to-right minimum or a right-to-left maximum (a counterexample would yield a $123$ pattern), so the converses hold as well.
\end{proof}

The Dyck path $K(w)$ can now be described as follows. Say $1 = i_1 < \cdots < i_k$ are the left-to-right minima of $w$. Set $w(i_0) = n+1 = i_{k+1}$. Using $U$ for an upstep and $D$ for a downstep, 
\begin{equation} \label{eq:krattenthaler-bijection}
K(w) = U^{w(i_0) - w(i_1)} D^{i_2 - i_1} U^{w(i_1) - w(i_2)} D^{i_3 - i_2} \cdots U^{w(i_{k-1}) - w(i_k)} D^{i_{k+1} - i_k}.
\end{equation}

\begin{lem} \label{lem:peak-height}
    Suppose $w \in S_n$ avoids $123$. Then $j$ is a left-to-right minimum of $w$ if and only if $K(w)$ has a peak at $n-w(j)+j$, in which case the peak has height $n+2-w(j)-j$.
    \end{lem}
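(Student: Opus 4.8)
The plan is to read off both the step index and the height of every peak directly from the $U$/$D$-expression for $K(w)$ displayed just above the lemma, keeping the notation there: $i_1 < \cdots < i_k$ are the left-to-right minima of $w$, and $w(i_0) = n+1 = i_{k+1}$. Two elementary points set things up. First, position $1$ is vacuously a left-to-right minimum, so $i_1 = 1$; this is essentially the only input beyond the displayed expression that the computation needs. Second, since consecutive left-to-right minima are strictly decreasing in value, every up-run $U^{w(i_{t-1})-w(i_t)}$ is nonempty (for $t \ge 2$ because $w(i_{t-1}) > w(i_t)$, for $t = 1$ because $w(i_1) \le n$), and likewise every down-run $D^{i_{t+1}-i_t}$ is nonempty; hence $K(w)$ has exactly $k$ peaks, the $m$-th one occurring at the end of the $m$-th up-run.

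To locate the $m$-th peak I would sum the lengths of all blocks up to and including the $m$-th up-run, that is, the first $m$ up-runs and the first $m-1$ down-runs. Both sums telescope:
\[
\sum_{t=1}^{m}\bigl(w(i_{t-1})-w(i_t)\bigr) = (n+1)-w(i_m), \qquad \sum_{t=1}^{m-1}\bigl(i_{t+1}-i_t\bigr) = i_m - i_1 = i_m - 1 .
\]
Adding these gives that the $m$-th peak is step number $n - w(i_m) + i_m$, while subtracting the second from the first (up-steps so far minus down-steps so far) gives its height, $n + 2 - w(i_m) - i_m$. Setting $j = i_m$ yields both formulas, and the ``if'' direction of the statement.

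For the ``only if'' direction I would use that $w(i_1) > \cdots > w(i_k)$ while $i_1 < \cdots < i_k$, so $m \mapsto i_m - w(i_m)$, and therefore $m \mapsto n - w(i_m) + i_m$, is strictly increasing. Consequently $j \mapsto n - w(j) + j$ carries the left-to-right minima of $w$ bijectively onto the $k$ peaks of $K(w)$: this is precisely the correspondence between left-to-right minima and peaks mentioned just before the lemma, now made quantitative, and in particular it is injective on left-to-right minima.

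I do not expect a genuine obstacle here; the lemma is a short computation from the explicit word for $K(w)$. The only thing needing care is the block-counting — that the $m$-th peak sits after exactly $m$ up-runs and $m-1$ down-runs — together with the resulting $\pm 1$'s in the step index and the height. The single place where anything beyond the explicit expression enters is the monotonicity of $m \mapsto n - w(i_m) + i_m$ used in the last step.
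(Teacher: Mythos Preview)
Your proposal is correct and is exactly the computation the paper has in mind: the paper states the lemma without proof, pointing to the displayed $U/D$-word for $K(w)$ and saying ``it is easy to see that left-to-right minima of $w$ correspond to peaks of $K(w)$''; you simply carry out that reading-off via the two telescoping sums, which is the natural (and only reasonable) approach.

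One small remark on the ``only if'' direction: your argument establishes that $j \mapsto n - w(j) + j$ is a bijection from the left-to-right minima onto the peak positions, which is the content the paper uses. If one reads the biconditional literally as a statement about \emph{all} $j\in[n]$, it can fail unless the height clause is included in the hypothesis (e.g.\ for $w=312$ and $j=3$, step $n-w(j)+j=4$ is a peak even though $j$ is not a left-to-right minimum, but the height there is $2$, not $n+2-w(j)-j=0$). Your bijection argument, together with the observation that the pair (step, height) determines $(j,w(j))$ uniquely via $j = 1 + \tfrac{1}{2}(\text{step}-\text{height})$, covers that reading as well.
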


\begin{proof}
    It is clear from \eqref{eq:krattenthaler-bijection} that the left-to-right minima of $w$ correspond to the peaks of $K(w)$. The peak corresponding to $i_p$ is preceded by $\sum_{q=1}^p (w(i_{q-1}) - w(i_q)) = n+1 - w(i_p)$ upsteps and by $\sum_{q=2}^p (i_{q} - i_{q-1}) = i_p - 1$ downsteps. The position and height of this peak are, respectively, the sum and difference of these two counts: $n-w(i_p)+i_p$ and $n+2-w(i_p)-i_p$.
\end{proof}

The next two corollaries follow using Lemmas \ref{lem:LRmin-description} and \ref{lem:saw-identity}.
\begin{cor} If $w$ avoids $123$ and has runs of anti-fixed points of lengths $\ell_1, \ldots, \ell_k$, then $K(w)$ has saws of lengths $2\ell_1, \ldots, 2\ell_k$.
\end{cor}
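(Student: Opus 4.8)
The plan is to read both claims straight off Lemma~\ref{lem:peak-height}, combined with the characterization of anti-fixed points as the positions $j$ with $w(j)=n-j+1$ (equivalently, those that are simultaneously a left-to-right minimum and a right-to-left maximum). First I would observe that if $j$ is an anti-fixed point of $w$, then $w(j)=n-j+1$ and hence $j$ is a left-to-right minimum, so Lemma~\ref{lem:peak-height} puts a peak of $K(w)$ at $n-w(j)+j=2j-1$ of height $n+2-w(j)-j=1$. Conversely, every height-$1$ peak of $K(w)$ arises this way: by the bijection between left-to-right minima and peaks of $K(w)$, each peak occupies a position $n-w(j)+j$ for a left-to-right minimum $j$, and it has height $1$ precisely when $n+2-w(j)-j=1$, i.e. $w(j)=n-j+1$, i.e. $j$ is anti-fixed. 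So $j\mapsto(\text{peak at }2j-1)$ is a bijection from the anti-fixed points of $w$ onto the height-$1$ peaks of $K(w)$.

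The remaining point is to see how this bijection matches the two notions of ``consecutive''. Anti-fixed points $j$ and $j+1$ are sent to peaks at steps $2j-1$ and $2j+1$, which differ by $2$ and so count as consecutive peaks, and this is reversible. Hence a maximal run $j,j+1,\ldots,j+\ell_i-1$ of anti-fixed points maps onto a maximal consecutive sequence of $\ell_i$ height-$1$ peaks, at steps $2j-1,2j+1,\ldots,2j+2\ell_i-3$; maximality transfers because a height-$1$ peak at step $2j-3$ or at step $2j+2\ell_i-1$ would, under the bijection, force $j-1$ or $j+\ell_i$ to be an anti-fixed point, contradicting maximality of the run. Such a block of $\ell_i$ peaks occupies steps $2j-1$ through $2j+2\ell_i-2$, which is $2\ell_i$ steps, so it is a saw of length $2\ell_i$. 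Therefore the saws of $K(w)$ have lengths $2\ell_1,\ldots,2\ell_k$.

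I do not expect a real obstacle: all of the content is already packed into Lemma~\ref{lem:peak-height}, which has converted left-to-right minima into peaks with explicit positions and heights, so the corollary is just the specialization of that dictionary to anti-fixed points plus the bookkeeping that ``index $+1$'' corresponds to ``step $+2$''. The one thing worth stating carefully is the ``only if'' direction above — that the height-$1$ peaks coming from anti-fixed points are the \emph{only} height-$1$ peaks — since otherwise a stray height-$1$ peak could split or merge a saw; but this is immediate from Lemma~\ref{lem:peak-height}.
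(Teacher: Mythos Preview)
Your argument is correct and is exactly the deduction the paper intends: the corollary is stated without proof as an immediate consequence of Lemma~\ref{lem:peak-height}, and you have simply spelled out that specialization (anti-fixed points $\leftrightarrow$ height-$1$ peaks, with ``index $+1$'' $\leftrightarrow$ ``step $+2$''). Your care in checking the converse direction---that every height-$1$ peak comes from an anti-fixed point, so that no stray peak can disrupt a saw---is appropriate and is the only point that requires a moment's thought.
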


\begin{cor} \label{cor:123-avoiding-identity} For any $n$,
\begin{equation*}
\sum_{\substack{v \in S_n\\ \text{$v$ avoids $123$}}} C_{\ell_1+1}\cdots C_{\ell_k+1} = {2n \choose n},
\end{equation*}
where $\ell_1, \ldots, \ell_k$ are the lengths of runs of anti-fixed points of each $v$.
\end{cor}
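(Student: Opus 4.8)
The plan is to deduce the identity by pushing the Dyck-path identity of Lemma~\ref{lem:saw-identity} through Krattenthaler's bijection $K$. First recall that $K$ restricts to a bijection from the set of $123$-avoiding permutations in $S_n$ onto $\D_n$ (this is Krattenthaler's theorem \cite{krattenthaler-123-avoiding-dyck-path-bijection}; the explicit $U/D$-word for $K(w)$ in terms of the left-to-right minima $i_1 < \cdots < i_k$ of $w$ recorded above makes both the length $2n$ and the invertibility transparent). Consequently it is enough to show that the summand attached to a $123$-avoiding $v \in S_n$ on the left of Corollary~\ref{cor:123-avoiding-identity} equals the summand attached to the Dyck path $K(v)$ on the left of Lemma~\ref{lem:saw-identity}, since then re-indexing the first sum by $D = K(v)$ converts it into the second sum, which is $\binom{2n}{n}$.

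That compatibility is precisely the preceding corollary: if $v$ avoids $123$ and its maximal runs of consecutive anti-fixed points have lengths $\ell_1, \ldots, \ell_k$, then $K(v)$ has saws of lengths $2\ell_1, \ldots, 2\ell_k$, so the product $C_{\ell_1+1}\cdots C_{\ell_k+1}$ indexing $v$ in Corollary~\ref{cor:123-avoiding-identity} is literally the product indexing $K(v)$ in Lemma~\ref{lem:saw-identity}. So once that corollary is in hand, Corollary~\ref{cor:123-avoiding-identity} is immediate.

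The only genuinely substantive ingredient is therefore that preceding corollary, equivalently Lemma~\ref{lem:peak-height}, and I would establish it straight from the word
\[
K(w) = U^{w(i_0) - w(i_1)} D^{i_2 - i_1} U^{w(i_1) - w(i_2)} D^{i_3 - i_2} \cdots U^{w(i_{k-1}) - w(i_k)} D^{i_{k+1} - i_k},
\]
where $w(i_0) = i_{k+1} = n+1$ and $i_1 = 1$ (position $1$ is always a left-to-right minimum). Every $U$- and $D$-block here is nonempty, so the peaks of $K(w)$ are exactly the ends of the $U$-blocks and correspond bijectively to the left-to-right minima $i_j$ of $w$. Summing partial runs of up- and down-steps, the peak coming from $i_j$ occurs at step $n - w(i_j) + i_j$ and has height $(n+1-w(i_j)) - (i_j - 1) = n + 2 - w(i_j) - i_j$; this height equals $1$ exactly when $w(i_j) + i_j = n+1$, i.e.\ when $i_j$ is an anti-fixed point, and if $i_j$ and $i_j + 1$ are both anti-fixed points then $i_j + 1 = i_{j+1}$ is the next left-to-right minimum and its peak sits two steps to the right of that of $i_j$. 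Hence a maximal run of $\ell$ consecutive anti-fixed points of $v$ yields a maximal block of $\ell$ consecutive height-$1$ peaks of $K(v)$, i.e.\ a saw of length $2\ell$. I foresee no real obstacle beyond careful bookkeeping with these step indices; everything else is re-indexing a finite sum.
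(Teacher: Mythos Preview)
Your proposal is correct and follows exactly the paper's approach: push Lemma~\ref{lem:saw-identity} through Krattenthaler's bijection $K$, using that runs of anti-fixed points go to saws of twice the length (the unnumbered corollary preceding Corollary~\ref{cor:123-avoiding-identity}). Your additional computation verifying Lemma~\ref{lem:peak-height} from the explicit $U/D$-word is accurate and simply fills in details the paper leaves implicit.
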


\section{The structure of $\P_w$}
\label{sec:Pw-structure}

\begin{defn}
    The $n$\th \emph{Catalan matroid} has groundset $[n]$ and bases
    \begin{equation*}
        \C_n \eqdef \{ \{i \in [n] : \text{$i$ an upstep of $D$}\}: D \in \D_n\}.
    \end{equation*}
\end{defn}

Ardila showed that $\C_n$ is indeed the set of bases of a matroid, and that this matroid can also be represented by a generic $n \times 2n$ matrix of the form
\begin{equation*}
A_{n} \eqdef \begin{bmatrix}
*       & 0      &      0 &      0 &   0    & \cdots & 0      & 0      & 0\\
*       & *      &      * &      0 &   0    & \cdots & 0      & 0      & 0\\
*       & *      &      * &      * &   *    & \cdots & 0      & 0      & 0\\
\vdots  & \vdots & \vdots & \vdots & \vdots & \ddots & \vdots & \vdots & \vdots\\
*       & *      &      * &      * &   *    & \cdots & *      & *      & 0
\end{bmatrix}
\end{equation*}
That is, for an $n$-subset $I$ of $[2n]$, the minor of this matrix in rows $[n]$ and columns $I$ is nonzero if and only if $I \in \C_n$.

Recall that $\P_w$ is the matroid of the matrix $M_w$ as defined in the introduction. Write $\P_n$ for $\P_{w_0}$ where $w_0 = n(n{-}1)\cdots 321 \in S_n$. Then $\P_n$ is represented by the $n \times 2n$ matrix
\begin{equation*}
M_{w_0} = \begin{bmatrix}
0      & 0      &  \cdots  & 0 & * & * & 0 & \cdots & 0      & 0\\
\vdots & \vdots &  \iddots  & * & * & * & * & \ddots & \vdots & 0\\
0      & 0      &  \iddots & * & * & * & * & \ddots & 0      & 0\\
0      & *      &  \cdots  & * & * & * & * & \cdots & *      & 0\\
*      & *      &  \cdots  & * & * & * & * & \cdots & *      & *
\end{bmatrix}
\end{equation*}
Deleting row $1$ and columns $1$ and $2n+2$ of $A_{n+1}$, then permuting columns appropriately, gives the matrix $M_{w_0}$. Hence $\P_n$ is isomorphic to $\C_{n+1}$. Specifically, say $\alpha : [2,2n+1] \to [2n]$ is the function sending $2, 3, \ldots, n+1$ to $n+1, n, n+2, n-1, \ldots, 2n, 1$. Then $D \in \C_{n+1}$ if and only if $\alpha(D \setminus \{1\}) \in \P_n$.
\begin{lem} \label{lem:reverse-perm-positroid} If $w_0 \in S_n$ is the reverse permutation, $\P_n$ is the set of $I \in {[2n] \choose n}$ such that $\#(I \cap [n-j+1,n-j]) \geq j$ for $1 \leq j \leq n$.
\end{lem}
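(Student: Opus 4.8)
The plan is to transport Ardila's description of $\C_{n+1}$ through the isomorphism $D \mapsto \alpha(D \setminus 1)$ recorded just above the statement, after which the argument is purely a matter of tracking indices. Recall that $\C_{n+1}$ consists of the sets $D \in {[2n+2] \choose n+1}$ with $\#(D \cap [2j+1]) \geq j+1$ for $0 \leq j \leq n$. The extreme cases are worth isolating: $j = 0$ forces $1 \in D$ for every such $D$, so $1$ is a coloop of $\C_{n+1}$, and $j = n$ forces $2n+2 \notin D$, so $2n+2$ is a loop. Consequently $D \mapsto D \setminus 1$ carries $\C_{n+1}$ bijectively onto a family of $n$-element subsets of $[2,2n+1]$, and post-composing with the bijection $\alpha\colon [2,2n+1] \to [2n]$ produces precisely the bijection $\C_{n+1} \to \P_n$ from the preceding paragraph. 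So it is enough to rewrite the inequalities defining $\C_{n+1}$ in terms of $I = \alpha(D \setminus 1)$.

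To do this, note that since $1 \in D$ always, the level-$j$ inequality $\#(D \cap [2j+1]) \geq j+1$ is equivalent to $\#\bigl((D \setminus 1) \cap [2,2j+1]\bigr) \geq j$, and applying the bijection $\alpha$ turns the left-hand side into $\#\bigl(I \cap \alpha([2,2j+1])\bigr)$. It remains only to identify $\alpha([2,2j+1])$: by the defining recipe for $\alpha$, the even elements $2, 4, \ldots, 2j$ map to $n+1, n+2, \ldots, n+j$ while the odd elements $3, 5, \ldots, 2j+1$ map to $n, n-1, \ldots, n-j+1$, so $\alpha([2,2j+1]) = [n-j+1, n+j]$. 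Therefore $\P_n$ is exactly the set of $I \in {[2n] \choose n}$ with $\#\bigl(I \cap [n-j+1, n+j]\bigr) \geq j$ for $0 \leq j \leq n$; the case $j = 0$ is vacuous, leaving the range $1 \leq j \leq n$, which is the asserted description.

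I do not expect a genuine obstacle here: the only delicate point is the index bookkeeping in computing $\alpha$ on the prefixes $[2,2j+1]$, and even that is routine once the pattern of $\alpha$ is unpacked. For completeness one should also recall why $D \mapsto \alpha(D \setminus 1)$ is a matroid isomorphism $\C_{n+1} \to \P_n$: deleting row $1$ of $A_{n+1}$ together with columns $1$ and $2n+2$ is harmless, since row $1$ vanishes outside column $1$ (so no elimination is needed to clear that column) and column $2n+2$ vanishes identically, leaving an $n \times 2n$ staircase matrix that represents $\C_{n+1}$ with its coloop $1$ and loop $2n+2$ removed; the column permutation $\alpha$ then straightens this staircase into the centered band $M_{w_0}$, whose row $i$ is supported on $[n-i+1, n+i] = [w_0(i), i+n]$.
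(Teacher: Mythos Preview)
Your proof is correct and follows exactly the approach the paper indicates: the paper does not give a separate proof of this lemma, but simply says the description ``follows from this isomorphism'' after recording the Dyck-path characterization of $\C_{n+1}$ and the map $\alpha$, and your argument is precisely the index-chasing that fills this in. Your identification $\alpha([2,2j+1]) = [n-j+1,n+j]$ is the key computation, and it is correct (note also that the interval in the statement should read $[n-j+1,n+j]$, not $[n-j+1,n-j]$, as you evidently realized).
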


\begin{proof}
    $D \in {[2n+2] \choose n+1}$ is the set of upsteps of a Dyck path of length $2n+2$ if and only if $[k]$ contains at least as many members of $D$ as of $[2n+2] \setminus D$, for each $k$. In fact, this only needs to hold for each odd $k$. Equivalently, $D \in \C_{n+1}$ if and only if $\#(D \cap [2j+1]) \geq j+1$ for $0 \leq j \leq n$. Setting $I = \alpha(D \setminus \{1\})$, this condition is equivalent to the lemma.
\end{proof}

We will need to consider versions of $\P_n$ on groundsets other than $[n]$, for which the following notation will be useful. Given a subset $X = \{x_1 < \cdots < x_k\}$ of $[n]$, write $Z_j X$ for the set
\begin{equation*}
\{n-x_j+1, \ldots, n-x_1+1, n+x_1, \ldots, n+x_j\}
\end{equation*}
Note that $Z_j X$ also depends on $n$, but we suppress that in the notation. We will abbreviate $Z_{\#X}(X)$ as $Z(X)$. Now for an interval $K \subseteq [n]$ of size $k$, let $f_{K,n}$ be the unique increasing function $[2k] \to Z(K)$. Finally, define $\P_{K,n}$ to be $f_{K,n}(\P_k)$. For example, $\P_{[3,4],7}$ is the matroid of a generic matrix
%\begin{center}
%$\displaystyle \overbracket{\phantom{\begin{array}{ccccccc} 0 & 0 & 0 & 0 & * & 0 & 0 \end{array}}}^{n} \phantom{\begin{array}{cc} 0 & 0 \end{array}} \overbracket{\phantom{\begin{array}{cc} * & 0 \end{array}}}^{\ell}\,\, \phantom{\begin{array}{ccc} 0 & 0 & 0 \end{array}}$ \vspace{-.3cm} \\
\begin{equation*} \left[
\begin{array}{cccccccccccccc}
0 & 0 & 0 & 0 & * & 0 & 0 & 0 & 0 & * & 0 & 0 & 0 & 0\\
0 & 0 & 0 & * & * & 0 & 0 & 0 & 0 & * & * & 0 & 0 & 0 
\end{array} \right].
\end{equation*}
%$ \vspace{-.35cm}\\
%$\displaystyle \phantom{\begin{array}{ccccccc} 0 & 0 & 0 & 0 & * & 0 & 0 \end{array}} \underbracket{\phantom{\begin{array}{ccc} 0 & 0 & * \end%{array}}}_{a}\,\,\,\,\, \phantom{\begin{array}{cccc} * & 0 & 0 & 0 \end{array}}$
%\end{center}
Alternatively, we can give a description in the style of Lemma~\ref{lem:reverse-perm-positroid}: $\P_{K,n}$ consists of the $k$-subsets $I$ of $Z(K)$ such that $\#(I \cap Z_j K) \geq j$ for each $j$ in $[k]$.

Let $L(w)$ be the set of left-to-right minima of $w$ which are not right-to-left maxima, and $R(w)$ the set of right-to-left maxima which are not left-to-right minima. We can now state our main structural result for $\P_w$.

\begin{thm} \label{thm:matroid-structure} Say $v, w \in S_n$.
\begin{enumerate}[(a)]
\item If $v \leq w$ in Bruhat order, then $\P_w \subseteq \P_v$.
\item The sets $\Q_w \eqdef \P_w \setminus \bigcup_{v > w} \P_v$ are pairwise disjoint.
\item If $w$ contains $123$, then $\Q_w$ is empty.
\item If $w$ avoids $123$, let $A_1, \ldots, A_k$ be the maximal intervals in the set of anti-fixed points of $w$. Then
\begin{equation*}
\Q_w = \bigoplus_{i=1}^k \P_{A_i, n} \oplus \{w(L(w))\} \oplus \{n + R(w)\}.
\end{equation*}
\end{enumerate}
\end{thm}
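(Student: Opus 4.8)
The whole argument rests on one combinatorial description of $\P_w$. Reading $\P_w$ as the transversal matroid of the intervals $[w(i),i+n]$, Hall's theorem says $I\in\binom{[2n]}{n}$ is a basis iff $\#\bigl(I\cap\bigcup_{i\in S}[w(i),i+n]\bigr)\ge\#S$ for every $S\subseteq[n]$. Since each $[w(i),i+n]$ has left endpoint $\le n$ and right endpoint $>n$, one may reduce to $S$ whose union of intervals is a single block $[c,d]$ with $c\le n<d$, and then enlarge $S$ to all $i\le d-n$ with $w(i)\ge c$; writing $d=q+n$ and $c=p$ this gives: $I\in\P_w$ if and only if
\[
e_w(p,q):=\#\{i\le q:w(i)\ge p\}\ \le\ \#\bigl(I\cap[p,q+n]\bigr)=:\sigma_I(p,q)\qquad\text{for all }p,q\in[n].
\]
Lemma~\ref{lem:reverse-perm-positroid} is this description for $w=w_0$ (with redundant inequalities dropped). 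Part (a) is now immediate: $(e_w(p,q))_{p,q}$ is a reindexing of the rank matrix of $w$, and $v\le w$ in Bruhat order is equivalent to $e_v\le e_w$ entrywise (a standard criterion), so $e_w\le\sigma_I$ forces $e_v\le\sigma_I$, i.e.\ $\P_w\subseteq\P_v$.

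The key feature of $\sigma_I$ is its additive shape: with $A_p=\#(I\cap[p,n])$ and $B_q=\#(I\cap[n+1,n+q])$ one has $\sigma_I(p,q)=A_p+B_q$, where $A$ is weakly decreasing, $B$ weakly increasing, both changing by $0$ or $1$ at each step, and $A_1+B_n=n$. Using this I would prove two facts. (i) For every $I$ the Bruhat ideal $\{u\in S_n:e_u\le\sigma_I\}$ has a unique maximal element $\pi(I)$, and $\pi(I)$ avoids $123$. (ii) For $123$-avoiding $v$, the fiber $\pi^{-1}(v)$ is exactly $\bigoplus_i\P_{A_i,n}\oplus\{v(L(v))\}\oplus\{n+R(v)\}$. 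Granting these, the fibers of $\pi$ partition $\binom{[2n]}{n}$; by (a), $\P_w=\{I:w\le\pi(I)\}=\bigsqcup_{v\ge w}\pi^{-1}(v)$ with the sum over $123$-avoiding $v$ only; and then $\Q_w=\P_w\setminus\bigcup_{v>w}\P_v$ collapses to $\pi^{-1}(w)$, which is empty when $w$ contains $123$ and is the stated direct sum otherwise, while the disjointness of the $\Q_w$ is the disjointness of the fibers. (As a consistency check, $\sum_v\#\Q_v=\sum_v g(v)=\binom{2n}{n}$ by Corollary~\ref{cor:123-avoiding-identity}.)

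To establish (i) I would exhibit $\pi(I)$ explicitly --- e.g.\ by building it greedily, choosing $\pi(I)(q)$ as large as the inequalities $e_\cdot\le\sigma_I$ permit while keeping the remaining values completable to a permutation --- and then verify separately that it dominates every element of the ideal, the unit-step behavior of $A$ and $B$ being exactly what forces the greedy choice and hence gives a genuine Bruhat maximum; alternatively one can recognize $\pi$ as a standardization map in the spirit of Section~\ref{sec:standardization}, under a bijection of $\binom{[2n]}{n}$ with lattice paths sending $\P_w$ to a set of paths dominating a fixed path and $\pi$ to path-standardization composed with Krattenthaler's $K$, which makes existence transparent. In either description the left-to-right minima of $\pi(I)$ (the entries weakly below the antidiagonal of its graph) are pinned down by the jump pattern of $(A,B)$, and being reconstructible from that data $\pi(I)$ avoids $123$ by the analysis around Lemma~\ref{lem:reverse-perm-positroid} and the $123$-avoiding/Dyck dictionary of Section~\ref{sec:standardization}. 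For (ii), that same jump pattern exhibits the anti-fixed runs $A_i$ of $v=\pi(I)$, the sets $L(v)$, $R(v)$, and the tight entries of $e_v\le\sigma_I$; one then checks, using Lemma~\ref{lem:reverse-perm-positroid}'s description of $\P_{A_i,n}$ as the $\#A_i$-subsets of $Z_{\#A_i}A_i$ meeting each $Z_jA_i$ in at least $j$ elements, that the conditions $e_v\le\sigma_I$ on an anti-fixed block translate precisely into $I\cap Z_{\#A_i}A_i\in\P_{A_i,n}$ and on the rest into $v(L(v))\cup(n+R(v))\subseteq I$ with $I$ inside the union of these ground sets, while the maximality of $v$ --- no cover $u=v\,t_{ab}$ with $v(a)<v(b)$ lies in the ideal, i.e.\ the box of positions $[a,b)$ and values $(v(a),v(b)]$ on which $u$ increases $e_v$ always contains a tight entry of $e_v\le\sigma_I$ --- is what forbids $I$ from lying in any larger fiber.

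I expect the main obstacle to be the maximality half of (i) together with the harder inclusion in (ii): equivalently, showing that any $I\in\P_w$ not of the advertised direct-sum shape can be pushed into $\P_u$ for some cover $u\gtrdot w$. Concretely this amounts to producing, inside the box attached to a suitable cover transposition of $w$, an entry at which $e_w\le\sigma_I$ is not yet tight; making this choice uniformly is where the additive and unit-step structure of $\sigma_I$ has to be exploited carefully.
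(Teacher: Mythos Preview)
Your Hall-criterion characterization $I\in\P_w\iff e_w(p,q)\le\sigma_I(p,q)$ is correct and equivalent to what the paper proves, and the overall architecture---find a unique Bruhat-maximal $\pi(I)$ with $e_{\pi(I)}\le\sigma_I$, then identify the fibers---is exactly the paper's strategy. The paper packages (i) differently: rather than working with rank matrices, it associates to $I$ an injective word $v_I$ of length $n$ on the alphabet $I$ (Lemma~\ref{lem:positroid-simplification-2}) so that $I\in\P_w\iff v_I\ge w$, and then explicitly computes the meet $u_I=v_I\wedge w_0$ in Bruhat order on injective words (Lemma~\ref{lem:word-meet}). This makes existence, uniqueness, and $123$-avoidance of $\pi(I)=u_I$ fall out of a single concrete formula (Remark~\ref{rem:u_I-calc}: $u_I(j)=v_I(j)$ whenever $v_I(j)\le n-j+1$, and the remaining entries fill in decreasingly), rather than a greedy construction whose maximality must be checked separately.

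The more substantive divergence is in part (d). You propose to prove both inclusions $\Q_w=\pi^{-1}(w)$ directly, and you correctly flag the reverse inclusion---showing that every $I$ with $\pi(I)=w$ has the advertised direct-sum shape---as the hard step. The paper simply does not do this. It proves only the easy inclusion $\tilde\Q_w\subseteq\Q_w$ (every $I$ of direct-sum shape satisfies $u_I=w$), and then invokes the counting identity of Corollary~\ref{cor:123-avoiding-identity}, which you mention only as a consistency check, to conclude equality: since the $\Q_w$ partition $\binom{[2n]}{n}$ and $\sum_w\#\tilde\Q_w=\binom{2n}{n}$, the inclusions $\tilde\Q_w\subseteq\Q_w$ must all be equalities. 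So the obstacle you anticipate is real, and the paper's answer is to sidestep it entirely via the enumerative identity from Section~\ref{sec:standardization}. Your route would give a more self-contained structural proof if you can carry it through, but the paper's counting shortcut is what actually closes the argument.
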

Here, for two families of sets $\mathcal{F}$ and $\mathcal{G}$, we write $\mathcal{F} \oplus \mathcal{G}$ for the family $\{I \sqcup J : I \in \mathcal{F}, J \in \mathcal{G}\}$. That is, if $\mathcal{F}$ and $\mathcal{G}$ are sets of bases for two matroids, then $\mathcal{F} \oplus \mathcal{G}$ is the set of bases for the direct sum of the two matroids. Also, for a set $A$ and integer $n$, we let $n + A \eqdef \{i + n : i \in A\}$.

\begin{ex}
Take $w = 645312$, which avoids $123$. The runs of anti-fixed points occur in positions $1$ and $4$, and $L(w) = \{2,5\}$ and $R(w) = \{6,3\}$. Hence
\begin{equation*}
\Q_w = \P_{[1,1], 6} \oplus \P_{[4,4], 6} \oplus \{\{4,1\}\} \oplus \{\{12,9\}\}.
\end{equation*}
We have $\P_{[1,1],6} = \{\{6\}, \{7\}\}$ and $\P_{[4,4],6} = \{\{3\}, \{10\}\}$. So, $\Q_w$ consists of the four sets
\begin{equation*}
13469(12),\quad 13479(12),\quad 1469(10)(12),\quad 1479(10)(12).
\end{equation*}
\end{ex}

\begin{rem} \label{rem:Qw-description}
    The description of $\Q_w$ given by Theorem~\ref{thm:matroid-structure}(d) can be rephrased in the manner of Lemma~\ref{lem:reverse-perm-positroid}.  Let $A$ be the set of anti-fixed points of $w$, and define
    \begin{equation*}
    G(w) \eqdef w(L(w) \cup A) \cup (n + (R(w) \cup A)) = w(L(w)) \cup (R(w) + n) \cup Z(A).
    \end{equation*}
    Then $\Q_w$ consists of the $n$-subsets $I$ of $G(w)$ such that $\#(I \cap Z_j K) \geq j$ for each maximal interval $K \subseteq A$ and each $j \in [\#K]$. In particular, $\#(I \cap Z(A \cap [j])) \geq \#(A \cap [j])$ for any $j \in [n]$.

    Alternatively, $\Q_w$ is the set of bases of a matroid. For $w$ avoiding $123$, let $N_w$ be a generic matrix whose entries are zero except that
     \begin{itemize}
     \item The entries $(i, w(i))$ for $i \in L(w)$ are nonzero.
     \item The entries $(i, i+n)$ for $i \in R(w)$ are nonzero.
     \item Suppose $w$ has runs of anti-fixed points in positions $A_1, \ldots, A_k$. For each $p$, the submatrix of $N$ in rows $A_p$ and columns $Z(A_p)$ is $M_{w_0}$, where $w_0 \in S_{\#A_p}$.
     \end{itemize}
     Then $\Q_w$ is the matroid of $N_w$. For instance, if $w = 645312$ as above then
     \begin{equation*}
        \left[\begin{array}{cccccccccccc}
            0 & 0 & 0 & 0 & 0 & * & * & 0 & 0 & 0 & 0 & 0\\
            0 & 0 & 0 & * & 0 & 0 & 0 & 0 & 0 & 0 & 0 & 0\\
            0 & 0 & 0 & 0 & 0 & 0 & 0 & 0 & * & 0 & 0 & 0\\
            0 & 0 & * & 0 & 0 & 0 & 0 & 0 & 0 & * & 0 & 0\\
            * & 0 & 0 & 0 & 0 & 0 & 0 & 0 & 0 & 0 & 0 & 0\\
            0 & 0 & 0 & 0 & 0 & 0 & 0 & 0 & 0 & 0 & 0 & *
        \end{array} \right]
    \end{equation*}
    \end{rem}

To prove Theorem~\ref{thm:matroid-structure}, we begin with a characterization of positroids from \cite{oh-positroid-characterization}. An \emph{affine permutation} is a bijection $f : \Z \to \Z$ such that $f(i+n) = f(i)+n$ for some fixed $n$ (the \emph{quasiperiod} of $f$) and all $i \in \Z$. Notice that an affine permutation is determined completely by the word $f(1)f(2)\cdots f(n)$, and we will specify an affine permutation by this word. For example, $4721$ sends $4k+1 \mapsto 4k+4$ for any $k$, sends $4k+2 \mapsto 4k+7$, and so on.

An affine permutation $f$ is \emph{bounded} if $i \leq f(i) \leq n+i$ for each $i \in \Z$. Suppose $f$ is bounded and that exactly $k$ of the values $f(1), \ldots, f(n)$ exceed $n$. The \emph{juggling sequence} of $f$ is the sequence $(J_1, \ldots, J_n)$ of $k$-subsets of $[n]$ given by $J_i = \{f(j) - i + 1 : j < i\} \cap \N$. Finally, let $\chi$ be the cyclic shift permutation $23\cdots n1 \in S_n$.

\begin{defn} \label{def:positroid}
The \emph{positroid} associated to an $f$ as described above is the matroid on $[n]$ with bases
\begin{equation} \label{eq:positroid-characterization}
\left\{I \in {[n] \choose k} : \chi^{-i+1} I \geq J_i \text{ for all $i = 1, \ldots, n$}\right\},
\end{equation}
where $\{a_1 < \cdots < a_k\} \leq \{b_1 < \cdots < b_k\}$ if $a_i \leq b_i$ for all $i$.
\end{defn}
Postnikov \cite{postnikov-positroids} gave various combinatorial descriptions of positroids, and conjectured that Definition~\ref{def:positroid} agrees with the definition of positroid given in the introduction---this conjecture was proven in \cite{oh-positroid-characterization}. The description in terms of bounded affine permutations is due to Knutson, Lam, and Speyer \cite{positroidjuggling}.

% Suppose $w \in S_n$. Section 3 of \cite{pawlowski-liu-conjecture-rank-varieties} shows that our $\P_w$, the matroid of a generic matrix of the form $M_w$, is the positroid associated to the bounded affine permutation $f_w$ of quasiperiod $2n$ defined by
For $w \in S_n$, let $f_w$ be the bounded affine permutation of quasiperiod $2n$ with
\begin{equation*}
f_w(i) = \begin{cases}
i+n & \text{if $1 \leq i \leq n$}\\
w(i)+2n & \text{if $n+1 \leq i \leq 2n$}
\end{cases}.
\end{equation*}
For instance, $f_{2143} = 5678(10)9(12)(11)$. 
\begin{thm}\label{thm:positroid-equals-rank} $\P_w$ is the positroid associated to $f_w$. \end{thm}
    \begin{proof}
        Let $\Pi_w^\circ$ be the set of $n$-planes in $\Gr(n, 2n)$ whose matroid is the positroid associated to $f_w$, and let $\Sigma_w^\circ$ be the set of $n$-planes in $\Gr(n, 2n)$ which are rowspans of matrices of the form $M_w$ whose nonzero entries are algebraically independent. It is shown in \cite[\S 4]{pawlowski-liu-conjecture-rank-varieties} that the Zariski closures $\overline{\Pi_w^\circ}$ and $\overline{\Sigma_w^{\circ}}$ are equal. Because $\Pi_w^\circ$ is defined by requiring certain Pl\"ucker coordinates on $\Gr(n, 2n)$ to be nonzero and the rest to be zero, it is locally closed, so $\overline{\Pi_w^{\circ}} \setminus \Pi_w^{\circ}$ is closed. This means that $\Sigma_w^{\circ}$ cannot be contained in $\overline{\Pi_w^{\circ}} \setminus \Pi_w^{\circ}$, because then its closure would be, contradicting $\overline{\Pi_w^\circ} = \overline{\Sigma_w^{\circ}}$. It follows that $\Sigma_w^{\circ} \cap \Pi_w^\circ$ is nonempty. Every member of $\Sigma_w^\circ$ has matroid $\P_w$, so this proves the theorem.
    \end{proof}

The juggling sequence $(J_1, \ldots, J_{2n})$ of $f_w$ is easy to describe: $J_1 = \cdots = J_{n+1} = [n]$, while $J_{n+j+1} = [n-j] \cup \{w([j]) + n-j\}$ for $j \in [n-1]$. This leads to a correspondingly simpler version of the test for membership in $\P_w$ given by Definition~\ref{def:positroid}. Given $I \in {[2n] \choose n}$ and some $j$, write $\chi^{-n-j}I = \{b_1 < \cdots < b_n\}$, and define $B_j(I) = \{b_{n-j+1}, \ldots, b_n\} - n + j$.

\begin{lem} \label{lem:positroid-simplification-1} A set $I \in {[2n] \choose n}$ is in $\P_w$ if and only if $B_j(I) \geq w([j])$ for $j = 1, \ldots, n$. \end{lem}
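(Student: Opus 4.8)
The plan is to unwind the definition of the positroid associated to $f_w$ (Definition~\ref{def:positroid}) using the explicit juggling sequence $(J_1,\ldots,J_{2n})$ of $f_w$, and to show that the test $\chi^{-i+1}I \geq J_i$ is vacuous for $i \leq n+1$ and, for $i = n+j+1$, reduces exactly to $B_j(I) \geq w([j])$.

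First I would verify the claimed description of the juggling sequence. By definition $J_i = \{f_w(j) - i + 1 : j < i\} \cap \N$. For $1 \leq i \leq n+1$, every $j < i$ with $j \leq n$ contributes $f_w(j) = j+n$, plus the periodic copies $f_w(j-2n) = w(j) $ for... — more carefully, the relevant indices $j < i$ modulo the quasiperiod $2n$ give values $f_w(j) - i + 1$, and one checks that exactly $n$ of these land in $\N$ and they are precisely $[n]$ when $i \leq n+1$. For $i = n+j+1$ with $1 \leq j \leq n$, the indices $j' < i$ contributing positive values are $j'\in[n]$ (giving $n+j' - (n+j+1) + 1 = j' - j$, positive only for $j' > j$, i.e. the values $[n-j]$ after subtracting, shifted appropriately) together with $j' \in [n+1, n+j]$ (giving $w(j'-n) + 2n - (n+j+1)+1 = w(j'-n) + n - j$, i.e. $w([j]) + n - j$). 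So $J_{n+j+1} = [n-j] \cup (w([j]) + n - j)$, as stated. I would present this computation compactly since it is routine bookkeeping modulo $2n$.

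Next, the test. For $i \leq n+1$, $\chi^{-i+1}I$ is always $\geq [n] = J_i$ in the Gale/componentwise order, since $[n]$ is the minimum of $\binom{[n]}{n}$ — so these conditions are automatic and can be discarded. For $i = n+j+1$, write $\chi^{-n-j}I = \{b_1 < \cdots < b_n\}$; then $\chi^{-i+1}I = \chi^{-n-j}I$ is this same set. The condition $\{b_1 < \cdots < b_n\} \geq [n-j] \cup (w([j])+n-j)$ in componentwise order: the first $n-j$ entries $b_1 < \cdots < b_{n-j}$ are automatically $\geq 1, 2, \ldots, n-j$ (smallest possible), so the condition is equivalent to $\{b_{n-j+1} < \cdots < b_n\} \geq w([j]) + n - j$ entrywise, i.e. (subtracting $n-j$) $B_j(I) = \{b_{n-j+1},\ldots,b_n\} - n + j \geq w([j])$. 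Here I need $w([j])$ sorted increasingly to match the sorted $B_j(I)$, which is exactly what the componentwise $\geq$ means. Combining over all $j = 1, \ldots, n$ gives the statement.

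**The main obstacle** I anticipate is the first step: carefully matching the periodicity conventions so that $J_i$ comes out correct, in particular confirming that exactly $n$ of the translates $f_w(j) - i + 1$ over $j < i$ are positive and identifying them — the definition $J_i = \{f_w(j)-i+1 : j<i\}\cap\N$ ranges over all integers $j < i$, not just $j \in [i-1]$, so one must use $n$-periodicity (here $2n$-periodicity) to see the set stabilizes. A secondary subtlety is being careful that the Gale order $\geq$ on $k$-subsets is "componentwise after sorting," so that dropping the first $n-j$ coordinates and translating is legitimate; this is standard but worth stating. Once the juggling sequence is pinned down, the reduction of the test is purely formal.
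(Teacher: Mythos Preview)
Your proposal is correct and follows essentially the same approach as the paper: discard the tests for $i \leq n+1$ because $J_i = [n]$, and for $i = n+j+1$ observe that the first $n-j$ coordinates of the Gale-order comparison are automatic, leaving exactly $B_j(I) \geq w([j])$. The paper's proof is terser---it states the juggling sequence just before the lemma without deriving it and leaves the ``first $n-j$ coordinates are automatic'' step implicit---but the logic is identical.
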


\begin{proof}
By Theorem~\ref{thm:positroid-equals-rank}, $I \in \P_w$ if and only if $\chi^{-i+1} I \geq J_i$ for $i \in [2n]$. This test is vacuous for $i \leq n+1$ since $J_i = [n]$. If $i = n+j+1$, it reads $\chi^{-n-j}I \geq [n-j] \cup \{w([j]) + n-j\}$, which is equivalent to $B_j(I) \geq w([j])$.
\end{proof}

Lemma~\ref{lem:positroid-simplification-1} can be simplified and rephrased in terms of Bruhat order on words.
\begin{defn} An \emph{injective word} on $\N$ is a word whose letters are all distinct, i.e. an injective function $v : [\ell] \to \N$ for some $\ell$. The \emph{Bruhat order} on injective words of length $\ell$ has $v \geq w$ if and only if $v([j]) \geq w([j])$ for $j \in [\ell]$.
\end{defn}
When restricted to permutations of $[\ell]$, the definition of Bruhat order above is sometimes called the \emph{tableau criterion}, and it agrees with the usual strong Bruhat order on permutations \cite[Theorem 2.6.3]{bjorner-brenti}.

Given
\begin{equation*}
I = \{i_1 < \cdots < i_p \leq n < i_{p+1} < \cdots < i_n\} \in {[2n] \choose n},
\end{equation*}
let $v_I$ be the injective word with $i_{p+1}, \ldots, i_n$ in positions $i_{p+1}-n, \ldots, i_n - n$ (in increasing order), and $i_p, \ldots, i_1$ in the remaining positions (in decreasing order). For example, if $n = 6$ and $I = \{1, 2, 5, 7, 10, 11\}$, then $v_I = 752(10)(11)1$.

\begin{lem} \label{lem:positroid-simplification-2} For any $w \in S_n$ and $I \in {[2n] \choose n}$, $I \in \P_w$ if and only if $v_I \geq w$. \end{lem}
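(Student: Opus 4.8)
The plan is to bootstrap from Lemma~\ref{lem:positroid-simplification-1}, which says $I \in \P_w$ iff $B_j(I) \geq w([j])$ for all $j \in [n]$, and to show that the single injective word $v_I$ packages all of these comparisons simultaneously — i.e.\ that $B_j(I) = v_I([j])$ as sorted $j$-element sets for every $j$. Granting this, Lemma~\ref{lem:positroid-simplification-2} is immediate from the tableau criterion for Bruhat order: $v_I \geq w$ iff $v_I([j]) \geq w([j])$ for all $j$ iff $B_j(I) \geq w([j])$ for all $j$ iff $I \in \P_w$.

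So the real content is the identity $B_j(I) = v_I([j])$. First I would unwind the definition of $B_j(I)$. Writing $I = \{i_1 < \cdots < i_p \leq n < i_{p+1} < \cdots < i_n\}$, the cyclic shift $\chi^{-n-j} I$ on $[2n]$ sends $x$ to the representative of $x - n - j$ in $[2n]$; concretely it moves the ``small'' part $i_1, \ldots, i_p$ (those $\leq n$) up past the ``large'' part and reverses the relative order of the two blocks in the usual way for cyclic rotations. One checks that after this rotation, the $j$ largest elements $b_{n-j+1} < \cdots < b_n$ of $\chi^{-n-j} I$, once we subtract $n-j$, are exactly the $j$ values of $v_I$ read off from positions $1, \ldots, j$. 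The cleanest way to see this is to track what $\chi^{-n-j}$ does step by step: rotating by $n$ first swaps the two blocks and sends $\{i_{p+1}, \ldots, i_n\}$ to $\{i_{p+1}-n, \ldots, i_n - n\}$ and $\{i_1, \ldots, i_p\}$ to $\{i_1 + n, \ldots, i_p + n\}$; then rotating by a further $j$ cycles the bottom $j$ positions to the top. Comparing with the recipe for $v_I$ — put $i_{p+1}, \ldots, i_n$ in positions $i_{p+1}-n, \ldots, i_n-n$ increasingly, and $i_p, \ldots, i_1$ in the remaining positions decreasingly — one verifies that the multiset of values landing in positions $[j]$ of $v_I$ coincides with $B_j(I)$. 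It is worth doing the $n=6$, $I = \{1,2,5,7,10,11\}$ example from the text explicitly as a sanity check, since the bookkeeping of which block wraps is the only place an error can creep in.

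I expect the main obstacle to be precisely this index chase: reconciling the ``$\cap\, \N$'' / cyclic-representative description implicit in $B_j(I)$ (inherited from the juggling-sequence definition) with the static, position-based description of $v_I$. There are two things to be careful about — the reversal of the descending block, and the fact that the threshold $n-j+1$ in the definition of $B_j$ interacts with the rotation amount $n+j$ so that exactly the ``right'' $j$ entries survive. Once the bijection between positions and values is pinned down for one value of $j$, the statement for all $j$ follows uniformly, and then the theorem is just the tableau criterion. A secondary, very minor point: one should note $v_I$ is genuinely an injective word of length $n$ on distinct positive integers (so that Bruhat comparison with $w \in S_n$ makes sense), which is clear since $I$ has $n$ distinct elements in $[2n]$ and the two blocks occupy complementary position sets.
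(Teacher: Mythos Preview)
Your proposal is correct and follows essentially the same route as the paper: reduce via the tableau criterion to the identity $B_j(I) = v_I([j])$, then verify that identity by unwinding the cyclic shift $\chi^{-n-j}$ block by block. The paper organizes the index chase you anticipate by introducing $q(j) = \#(I \cap [n+1,n+j])$, deriving the closed form $B_j(I) = \{i_{p+q(j)-j}, \ldots, i_{p+q(j)-1}\}$, and then checking the nesting $B_{j-1}(I) \subset B_j(I)$ with $B_j(I)\setminus B_{j-1}(I) = \{v_I(j)\}$ in the two cases $j \in (I-n)$ and $j \notin (I-n)$; this is exactly the ``which block wraps'' bookkeeping you flag as the only delicate point.
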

\begin{proof} This will follow from Lemma~\ref{lem:positroid-simplification-1} if we show that $B_{j-1}(I) \subseteq B_j(I)$ for each $j$ and that $B_j(I) \setminus B_{j-1}(I) = \{v_I(j)\}$. For each $j \geq 0$, we have
\begin{equation*}
\chi^{-n-j} I = \{i_{p+q(j)} - n < \cdots < i_n - n < i_1 + n < \cdots < i_{p+q(j)-1} + n\} - j,
\end{equation*}
where $q(j) \geq 0$ is such that $i_1 < \cdots < i_{p+q(j)-1} \leq n+j < i_{p+q(j)} < \cdots < i_n$. We must have $n-(p+q(j))+1 \leq 2n - (n+j)$, or equivalently $p+q(j)-1 \geq j$. Therefore
\begin{equation} \label{eq:Bj-description}
B_j(I) = \{i_{p+q(j)-j} < \cdots < i_{p+q(j)-1}\}.
\end{equation}

There are two cases now. If $j = i_{p+r} - n$ for some $r \geq 1$, then $q(j) = q(j-1)+1 = r+1$. One can then see from \eqref{eq:Bj-description} that $B_j(I) \subseteq B_{j-1}(I)$ and that
\begin{equation*}
B_j(I) \setminus B_{j-1}(I) = \{i_{p+q(j-1)}\} = \{i_{p+r}\} = \{v_I(j)\}.
\end{equation*}

On the other hand, if $j \notin \{i_{p+1}-n, \ldots, i_n-n\}$ and $j \geq 1$, then $q(j) = q(j-1)$. Again \eqref{eq:Bj-description} shows that $B_j(I) \subseteq B_{j-1}(I)$, and now
\begin{equation*}
B_j(I) \setminus B_{j-1}(I) = \{i_{p+q(j-1)-j}\} = \{i_{p+q(j)-j}\}.
\end{equation*}

Since the sets $B_j(I)$ are nested and get larger by one element with each step, the word formed by the singletons $B_j(I) \setminus B_{j-1}(I)$ must be injective, and its entries are the members of $I$ in some order by \eqref{eq:Bj-description}. We have seen that the entries in positions $\{i_{p+1}-n, \ldots, i_n-n\}$ agree with those for $v_I$. Therefore to show that the remaining entries are $i_p, \ldots, i_1$, it suffices to show that they come in decreasing order. This follows from the fact that the function $j \mapsto p+q(j)-j$ is weakly decreasing, since $q(j+1)-q(j) \in \{0,1\}$ for each $j$.
\end{proof}

Lemma~\ref{lem:positroid-simplification-2} says that $\P_w$ is the inverse image in ${[2n] \choose n}$ of the order filter above $w$ in the poset of length $n$ injective words under the map $I \mapsto v_I$. The following dual perspective will also be useful. Given a fixed $I \in {[2n] \choose n}$, let $W_I \eqdef \{w \in S_n : I \in \P_w\}$. By Lemma~\ref{lem:positroid-simplification-2}, $W_I = \{w \in S_n : v_I \geq w\}$. Recall that part (b) of Theorem~\ref{thm:matroid-structure} claims that the sets $\P_w \setminus \bigcup_{v > w} \P_v$ are pairwise disjoint for $w \in S_n$, which is equivalent to the statement that $W_I$ has a unique maximal element.  If $v_I$ and $w_0 = n(n{-}1)\cdots 321$ have a greatest lower bound, it will be the unique maximal element of $W_I$. The poset of injective words with Bruhat order is not a lattice, but in fact the greatest lower bound exists in this case.

\begin{lem} \label{lem:word-meet} Let $v$ be an injective word of length $n$ and $w_0 = n(n{-}1)\cdots 321$. For each $j$, define
\begin{equation*}
s(j) = \#([v(j)] \cap v([j])) = \#\{1 \leq i \leq j : v(i) \leq v(j)\}.
\end{equation*}
Let $u \in S_n$ be such that $u(j) = v(j)$ if $v(j) \leq n-j + s(j)$, and whose other entries are the other members of $[n]$, in decreasing order. Then $u$ is a greatest lower bound for $v$ and $w_0$ in Bruhat order. \end{lem}

\begin{proof}
Define
\begin{equation*}
E_j = \min(v([j]), w_0([j])) = \{\min(b_1, n-j+1) < \cdots < \min(b_j, n)\},
\end{equation*}
where $v([j]) = \{b_1 < \cdots < b_j\}$. If the sets $E_j$ are nested, the corresponding injective word will be a greatest lower bound for $v$ and $w_0$, so we must show that $E_{j-1} \subseteq E_j$ and $E_j \setminus E_{j-1} = \{u(j)\}$ for each $j$. The proof will be similar to that of Lemma~\ref{lem:positroid-simplification-2}.

For each $j$, take  $r(j)$ maximal such that $b_{r(j)} \leq n-j+r(j)$, or $0$ if there is no such $r$. For a fixed $j$, write $v([j-1]) = \{b_1 < \cdots < b_{j-1}\}$. Then
\begin{equation*}
E_{j-1} = \{b_1 < \cdots < b_{r(j-1)} < n-j+r(j-1)+2 < \cdots < n\}.
\end{equation*}
Now we consider two cases.
\begin{itemize}
    \item Suppose $v(j) \leq n-j+s(j)$. Then $r(j) \geq s(j)$, so $r(j) = r(j-1)+1$ and
\begin{align*}
E_j &= \{b_1 < \cdots < b_{s-1} < v(j) < b_s < \cdots < b_{r(j-1)} < n-j + r(j)+1 < \cdots < n\}\\
    &= \{b_1 < \cdots < b_{s-1} < v(j) < b_s < \cdots < b_{r(j-1)} < n-j + r(j-1)+2 < \cdots < n\}\\
    &= E_{j-1} \cup \{v(j)\}.
\end{align*}

\item Suppose $v(j) > n-j+s(j)$. Then $r(j) \leq s(j)-1$. In this case we have $r(j) \leq r(j-1)$, and if $r(j) < i \leq r(j-1)$, then $b_i = n-j+i+1$. Therefore
\begin{align*}
&E_j = \{b_1 < \cdots < b_{r(j)} < n-j + r(j)+1 < \cdots < n\}\\
&E_{j-1} = \{b_1 < \cdots < b_{r(j)} < n-j+r(j)+2 < \cdots < n\},
\end{align*}
so $E_j = E_{j-1} \cup \{n-j+r(j)+1\}$.
\end{itemize}

In both cases we see that $E_{j-1} \subseteq E_j$, so the permutation $u$ defined by $E_j \setminus E_{j-1} = \{u(j)\}$ is a greatest upper bound for $w_0$ and $v$. Moreover, if $v(j) \leq n-j+s(j)$, then $u(j) = v(j)$. If on the other hand $j$ is such that $v(j) > n-j+s(j)$, then $u(j) = n-j+r(j)+1$; since $j \mapsto r(j)-j$ is a weakly decreasing function, we see that $u$ is weakly decreasing on such positions $j$, as claimed.
\end{proof}

\begin{cor} \label{cor:W_I-max} For any $I \in {[2n] \choose n}$, the set $W_I$ has a unique maximal element $u_I$. \end{cor}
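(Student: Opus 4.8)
The plan is to deduce this immediately from Lemma~\ref{lem:positroid-simplification-2} and Lemma~\ref{lem:word-meet}. First I would reinterpret $W_I$: by Lemma~\ref{lem:positroid-simplification-2}, $I \in \P_w$ is equivalent to $v_I \geq w$ in Bruhat order on injective words, so $W_I = \{w \in S_n : w \leq v_I\}$. In particular every $w \in W_I$ is a common lower bound, in the poset of length-$n$ injective words, of the two words $v_I$ and $w_0 = n(n-1)\cdots 1$ — the latter because $w_0$ is the Bruhat-maximum of $S_n$ and $W_I \subseteq S_n$.

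Next I would apply Lemma~\ref{lem:word-meet} with $v = v_I$. It furnishes a permutation $u \in S_n$ that is a greatest lower bound of $v_I$ and $w_0$. Setting $u_I = u$, the greatest-lower-bound property gives $w \leq u_I$ for every $w \in W_I$; on the other hand $u_I \leq v_I$, so $u_I$ itself lies in $W_I$. Hence $u_I$ is the unique maximal element of $W_I$, which also establishes part (b) of Theorem~\ref{thm:matroid-structure} in view of the remark that part (b) is equivalent to $W_I$ having a unique maximal element.

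Since the two lemmas do all the work, there is essentially no obstacle here. The only point meriting a word of care is that the greatest lower bound produced by Lemma~\ref{lem:word-meet} is genuinely a permutation lying below $v_I$ — both of which are explicit in that lemma's statement — so that it actually belongs to $W_I$ rather than merely dominating it from outside $S_n$.
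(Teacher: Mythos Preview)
Your proof is correct and follows exactly the same approach as the paper's, which simply states that the unique maximal element of $W_I$ is the greatest lower bound of $v_I$ and $w_0$ furnished by Lemma~\ref{lem:word-meet}. You have merely spelled out in more detail why this greatest lower bound both lies in $W_I$ and dominates every element of it.
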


\begin{proof} $u_I$ is the greatest lower bound of $v_I$ and $w_0$ given by Lemma~\ref{lem:word-meet}.
\end{proof}

\begin{rem} \label{rem:u_I-calc}
Calculating $u_I$ is simpler than Lemma~\ref{lem:word-meet} might lead one to believe, because the entries of $v_I$ in $[n]$ form a decreasing sequence, so in the case that $v_I(j)\leq n-j+s(j)$, we actually have $s(j) = 1$. Hence $u_I$ is the permutation in $S_n$ such that $u_I(j) = v_I(j)$ when $v_I(j) \leq n-j+1$, and whose other entries form a decreasing subsequence. For example, say $n = 9$ and $I = \{1, 3, 4, 6, 9, 10, 15, 16, 17\}$. Then $v_I = (10)9643(15)(16)(17)1$ and $u_I = 986437521$. The next lemma shows that $u_I$ is determined by even less information.
\end{rem}

\begin{lem} \label{lem:u_I-vs-v_I}
For any $I \in {[2n] \choose n}$, we have $u_I(j) < n-j+1$ if and only if $v_I(j) < n-j+1$ (and in this case $u_I(j) = v_I(j)$). Moreover, the permutation $u_I$ is uniquely determined by the set $\{(j, v_I(j)) : v_I(j) < n-j+1\}$.
\end{lem}

\begin{proof}
The description of $u_I$ in Remark~\ref{rem:u_I-calc} shows that $u_I$ is the union of two decreasing subsequences, and so it avoids the pattern $123$. A $123$-avoiding permutation $z \in S_n$ is uniquely determined by the pairs $(j, z(j))$ for which $z(j) < n-j+1$, because the other entries will be right-to-left maxima and come in decreasing order. Thus it suffices to prove the first claim.

By Remark~\ref{rem:u_I-calc}, if $v_I(j) < n-j+1$ then $u_I(j) = v_I(j)$. Suppose that $u_I(j) < n-j+1$ but that $u_I(j) \neq v_I(j)$. This implies $v_I(j) > n-j+1$. By the pigeonhole principle, there must be $k > j$ such that $u_I(k) > n-k+1$, and the minimal such $k$ must satisfy $u_I(k) > u_I(j)$. For such a $k$ we have $v_I(k) > n-k+1$. But then the construction of $u_I$ implies that $u_I(j)$ and $u_I(k)$ are part of the same decreasing subsequence. This is a contradiction, since $j < k$ and $u_I(j) < u_I(k)$.
\end{proof}

We now restate and prove Theorem~\ref{thm:matroid-structure}.

\begin{thm*}[Theorem \ref{thm:matroid-structure}] Say $v, w \in S_n$. Then
\begin{enumerate}[(a)]
\item If $v \leq w$ in Bruhat order, then $\P_w \subseteq \P_v$.
\item The sets $\Q_w = \P_w \setminus \bigcup_{v > w} \P_v$ are pairwise disjoint.
\item If $w$ contains $123$, then $\Q_w$ is empty.
\item If $w$ avoids $123$, say $w$ has runs of anti-fixed points $A_1, \ldots, A_k$. Then
\begin{equation} \label{eq:Qw-matroid}
\Q_w = \bigoplus_{i=1}^k \P_{A_i, n} \oplus \{w(L(w))\} \oplus \{n + R(w)\}.
\end{equation}
\end{enumerate}
\end{thm*}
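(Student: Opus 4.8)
The plan is to leverage Corollary~\ref{cor:W_I-max} and Lemma~\ref{lem:positroid-simplification-2} throughout, viewing $\P_w$ via the map $I \mapsto v_I$ and the set $W_I = \{w : v_I \geq w\}$. Part (a) is immediate: if $v \le w$ and $I \in \P_w$ then $v_I \geq w \geq v$, so $I \in \P_v$. For part (b), the key observation (already flagged in the excerpt) is that $\Q_w = \{I : u_I = w\}$, where $u_I$ is the unique maximal element of $W_I$. Indeed $I \in \Q_w$ means $I \in \P_w$ (so $w \in W_I$, i.e. $w \le u_I$) but $I \notin \P_v$ for all $v > w$ (so no $v > w$ lies in $W_I$); since $u_I \in W_I$ this forces $u_I = w$. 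Conversely if $u_I = w$ then $w \in W_I$ so $I \in \P_w$, and any $v > w = u_I$ is not $\le u_I$, hence $v \notin W_I$, so $I \notin \P_v$. Thus the $\Q_w$ are the fibers of $I \mapsto u_I$ and are automatically disjoint. Part (c) then follows from Lemma~\ref{lem:u_I-vs-v_I}: $u_I$ is always a union of two decreasing subsequences, hence $123$-avoiding, so if $w$ contains $123$ no $I$ has $u_I = w$ and $\Q_w = \emptyset$.

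The substance is part (d): identifying the fiber $\{I : u_I = w\}$ with the displayed direct sum when $w$ avoids $123$. By Lemma~\ref{lem:u_I-vs-v_I}, $u_I = w$ if and only if the set of pairs $\{(j, v_I(j)) : v_I(j) < n-j+1\}$ equals $\{(j, w(j)) : w(j) < n-j+1\}$ — that is, the "below-anti-diagonal" part of $v_I$ is forced to coincide with that of $w$, while the rest of $v_I$ (the entries $\ge n-j+1$ in their positions, in decreasing order by construction of $v_I$) is unconstrained beyond being a decreasing arrangement of the complementary values. I would translate this condition on $v_I$ back into a condition on $I$ using the explicit recipe for $v_I$: $I$ determines which of its elements lie in $[n]$ versus $[n+1,2n]$, and the positions. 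Concretely, writing $I = \{i_1 < \cdots < i_p \le n < i_{p+1} < \cdots < i_n\}$, the small entries of $v_I$ are $i_p > \cdots > i_1$ placed in the positions \emph{not} of the form $i_q - n$, and the large entries are $i_{p+1} < \cdots < i_n$ in positions $i_{p+1}-n < \cdots < i_n - n$. By Lemma~\ref{lem:peak-height}-style bookkeeping and the definition of $L(w), R(w)$ and the anti-fixed runs $A_i$, I would show: the elements of $I$ in $[n]$ that encode the "forced" small-entry pairs are exactly $w(L(w))$ together with the reflected lower halves of the anti-fixed runs; the elements of $I$ in $[n+1,2n]$ that are forced are exactly $n + R(w)$ together with the reflected upper halves; and within each anti-fixed run $A_i = [a, a+\ell-1]$, the freedom in choosing $I \cap Z_\ell A_i$ is precisely the condition $\#(I \cap Z_j A_i) \ge j$ for $1 \le j \le \ell$. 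That last condition is exactly the description of $\P_{A_i, n}$ given just before Theorem~\ref{thm:matroid-structure} (equivalently, Lemma~\ref{lem:reverse-perm-positroid} transported by $f_{A_i, \ell, n}$), which identifies the run-$A_i$ factor with $\P_{A_i,n}$ and yields the direct sum decomposition, since the relevant index sets $Z_\ell A_1, \dots, Z_\ell A_k, w(L(w)), n+R(w)$ are pairwise disjoint and exhaust $G(w)$.

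I expect the main obstacle to be the bookkeeping in part (d): carefully matching the "$v_I(j) < n-j+1$" pairs with the groundset decomposition $G(w) = w(L(w)) \sqcup (n+R(w)) \sqcup \bigsqcup_i Z_{\#A_i} A_i$, and verifying that the residual Bruhat condition $v_I \ge w$, once the forced coincidences are imposed, reduces exactly to the inequalities $\#(I \cap Z_j A_i) \ge j$ within each run and nothing more. The cleanest route is probably to first handle the single-run case $w = w_0 \in S_n$ (where the claim is just Lemma~\ref{lem:reverse-perm-positroid}, already proved), then the case of a single run embedded in an otherwise "generic" $123$-avoiding permutation via the shift functions $f_{K,n}$, and finally assemble the general case by observing that the conditions coming from distinct runs involve disjoint blocks of coordinates and therefore combine as a direct sum. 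A small additional check is that every $I$ in the asserted direct sum actually has $u_I = w$ (not merely $u_I \ge w$), which follows from Lemma~\ref{lem:u_I-vs-v_I} once one confirms that such an $I$ produces no small-entry pair outside those of $w$.
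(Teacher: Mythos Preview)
Your treatment of (a)--(c) matches the paper's exactly: (a) is immediate from Lemma~\ref{lem:positroid-simplification-2}, (b) follows from Corollary~\ref{cor:W_I-max} by identifying $\Q_w$ with the fiber $\{I : u_I = w\}$, and (c) falls out of Remark~\ref{rem:u_I-calc}/Lemma~\ref{lem:u_I-vs-v_I}.

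For (d) you take a genuinely different route. The paper does \emph{not} attempt to prove both inclusions by unwinding $v_I$. Instead it lets $\tilde{\Q}_w$ denote the right-hand side of \eqref{eq:Qw-matroid}, proves only $\tilde{\Q}_w \subseteq \Q_w$ by a direct computation (showing, for each $j$ with $w(j)<n-j+1$, that $\#(I\cap[w(j),n+j])=j$ via two inequalities, the upper bound coming from Hall's marriage theorem applied to the matrix $N_w$), and then closes the argument by the global counting identity of Corollary~\ref{cor:123-avoiding-identity}: since the nonempty $\Q_w$ partition $\binom{[2n]}{n}$ and $\sum_w \#\tilde{\Q}_w = \binom{2n}{n}$, the inclusions must be equalities. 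So the entire apparatus of Section~\ref{sec:standardization} (standardization of lattice paths and Krattenthaler's bijection) is what the paper uses to finish (d).

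Two remarks on your plan. First, you have the relative difficulty inverted: what you call the ``small additional check'' --- that $I\in\tilde{\Q}_w$ forces $u_I=w$ --- is the part the paper actually works out in detail, and it is not short. Second, the inclusion $\Q_w\subseteq\tilde{\Q}_w$, which you propose to obtain by translating the condition on the below--anti-diagonal pairs of $v_I$ back to $I$, is precisely the step the paper sidesteps by counting; your sketch of it is plausible, but the claim that the residual constraint inside each run is \emph{exactly} $\#(I\cap Z_jA_i)\ge j$ and nothing more needs justification, since the positions at which the small entries of $v_I$ land depend on all of $I$, not just on $I\cap Z_{\#A_i}A_i$. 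If you carry this through, you obtain a self-contained proof of (d) independent of Section~\ref{sec:standardization}; the paper's route is shorter because it cashes in work already done there.
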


\begin{proof} \hfill
\begin{enumerate}[(a)]
\item Immediate from Lemma~\ref{lem:positroid-simplification-2}.

\item We have $I \in \Q_w$ if and only if $w$ is a maximal element of $W_I = \{w \in S_n : I \in \P_w\}$, so this follows from Corollary~\ref{cor:W_I-max}.

\item Suppose $I \in \Q_w$. As in (b), this is equivalent to $\max W_I = w$. The description of $u_I = \max W_I$ in Remark~\ref{rem:u_I-calc} shows that $u_I$ is the union of two decreasing subsequences, and so it avoids $123$.% (One can also prove this on the level of matrices, by using a $123$ pattern in $w$ and a transversal of $M_w$ in columns $I$ to find some transposition $t$ such that $w \leq wt$ along with a transversal of $M_{wt}$ in columns $I$.)

%Suppose $w \in S_n$ contains the pattern $123$, and that $I \in \P_w$. We will produce $v > w$ such that $I \in \P_v$. Let $S(w)$ be the set $\{(a,b) : 1 \leq a \leq n, w(a) \leq b \leq a+n\}$. A \emph{transversal} of $S(w)$ in columns $I = \{i_1 < \cdots < i_n\}$ is a permutation $\pi \in S_n$ such that $(a, i_{\pi(a)}) \in S(w)$ for each $1 \leq a \leq n$. A set $I$ is in $\P_w$ if and only if $S(w)$ has a transversal with columns $I$.

%Take a transversal $\pi$ of $S(w)$ in columns $I$, and choose $a < b < c$ such that $w(a) < w(b) < w(c)$. First assume that $i_{\pi(a)} \notin [w(a), w(b)-1]$. Since $S(wt_{ab})$ is obtained from $S(w)$ by replacing $\{a\} \times [w(a), w(b)-1]$ with $\{b\} \times [w(a), w(b)-1]$, in this case $\pi$ is still a transversal of $S(wt_{ab})$ in columns $I$, so $I \in \P_{wt_{ab}}$. Likewise, if $i_{\pi(b)} \notin [w(b), w(c)-1]$ then $I$ is in $\P_{wt_{bc}}$.

%Now we can assume that $i_{\pi(a)} \in [w(a), w(b)-1]$ and $i_{\pi(b)} \in [w(b), w(c)-1]$. But then $i_{\pi(b)} \leq a+n$, so $(a, i_{\pi(b)})$ and $(b, i_{\pi(a)})$ are in $S(wt_{ab})$. This implies that $\pi t_{ab}$ is a transversal of $S(wt_{ab})$ with columns $I$, so $I \in \P_{wt_{ab}}$.

\item Let $\tilde{\Q}_w$ be the set on the right-hand side of \eqref{eq:Qw-matroid}, and suppose $I \in \tilde{\Q}_w$. Let us see that $w = u_I$, which implies $I \in \Q_w$. By part (c), $u_I$ avoids $123$. As mentioned in the proof of Lemma~\ref{lem:u_I-vs-v_I}, a $123$-avoiding permutation $z$ is completely determined by the set $\{(j, z(j)) : z(j) < n-j+1\}$. Hence, it is enough to show that if $w(j) < n-j+1$ or $u_I(j) < n-j+1$, then $w(j) = u_I(j)$. By Lemma~\ref{lem:u_I-vs-v_I}, this is equivalent to the claim that if $w(j) < n-j+1$ or $v_I(j) < n-j+1$, then $w(j) = v_I(j)$.

Observe that $v_I(j) < n{-}j{+}1$ if and only if $n+j \notin I$ and $\#(I \cap [v_I(j), n+j]) = j$. Thus, we want either of $w(j) < n{-}j{+}1$ or $v_I(j) < n{-}j{+}1$ to imply $n+j \notin I$ and $\#(I \cap [w(j), n{+}j]) = j$. The first condition is easy: if $n+j \in I$, then (1) $w(j) \geq n{-}j{+}1$ because $j$ must be a right-to-left maximum of $w$, and (2) the construction of $v_I$ implies $v_I(j) > n$.

Let $A$ be the set of anti-fixed points of $w$. Then $I$ is the disjoint union of $w(L(w))$, $R(w)+n$, and $I \cap Z(A)$, and we consider these three pieces of $I$ separately.
\begin{itemize}
    \item $\#(w(L(w)) \cap [w(j), n+j]) = \#(w(L(w)) \cap [w(j), n]) = \#(L(w) \cap [j])$, where the second equality uses the fact that $j \in L(w)$.
\item  $\#((R(w) + n) \cap [w(j), n+j]) = \#(R(w) \cap [j])$.
\item Lemma~\ref{lem:LRmin-description} implies that $[w(j), n+j] \supseteq [n-j+1, n+j]$,
\begin{align*}
    \#(I \cap Z(A) \cap [w(j), n+j]) &\geq \#(I \cap Z(A) \cap [n-j+1, n+j])\\
    &= \#(I \cap Z(A \cap [j])) \geq \#(A \cap [j]),
\end{align*}
where the last inequality follows from the description of $\tilde{\Q}_w$ from Remark~\ref{rem:Qw-description}.
\end{itemize}
Putting these three pieces of $I \cap [w(j), n+j]$ together,
\begin{align*}
\#(I \cap [w(j), n+j])) &= \#(L(w) \cap [j]) + \#(R(w) \cap [j]) + \#(I \cap Z(A) \cap [w(j), n+j])\\
&\geq \#(L(w) \cap [j]) + \#(R(w) \cap [j]) + \#(A \cap [j]) = j.
\end{align*}

For the reverse inequality, we use the easy direction of Hall's marriage theorem. Let $D_i$ be the set of *'s in column $i$ of the matrix $N_w$ representing $\tilde{\Q}_w$ (cf. Remark~\ref{rem:Qw-description}). That is,
\begin{equation*}
D_i = \begin{cases}
\{w^{-1}(i)\} & \text{if $i \in w(L(w))$}\\
\{i-n\} & \text{if $i \in R(w)+n$}\\
[a_{\ell}-\ell+k, a_{\ell}] & \text{if $i$ or $2n-i+1$ is $k$\th in a run of anti-fixed points $a_1, \ldots, a_{\ell}$}.
\end{cases}
\end{equation*}
Since $N_w$ has a transversal in columns $I$, we must have
\begin{align*}
\#(I \cap [w(j), n+j]) \leq \#\left(\bigcup_{i \in I \cap [w(j), n+j]} D_i\right).
\end{align*}
Notice that if $w(i) < i' \leq n$ or $n > i' > i+n$, the contents of $D_{i'}$ are bounded above by $i$. Therefore $\bigcup_{i \in I \cap [w(j), n+j]} D_i \subseteq [j]$, and we get the desired inequality.

We have now shown that $\tilde{\Q}_w \subseteq \Q_w$ for all $123$-avoiding $w$. By parts (a) and (b), the non-empty $\Q_w$ partition $\P_{12\cdots n} = {[2n] \choose n}$. Thus to get $\tilde{\Q}_w = \Q_w$, it is enough to show that
\begin{equation*}
\sum_{\substack{w \in S_n\\ \text{$w$ avoids $123$}}} \#\tilde{\Q}_w = {2n \choose n},
\end{equation*}
which we have done in Corollary~\ref{cor:123-avoiding-identity}.

\end{enumerate}

\end{proof}

Because $\#\P_{K, n} = C_{\#K+1}$, we get an immediate enumerative corollary.
\begin{cor} The size of $\P_w$ is
\begin{equation*}
\sum_{\substack{v \geq w\\ \text{$v$ avoids $123$}}} C_{\ell_1+1} \cdots C_{\ell_k+1},
\end{equation*}
where $\ell_1, \ldots, \ell_k$ are as in the statement of Theorem~\ref{thm:matroid-structure}, the lengths of the runs of anti-fixed points in each $v$.
\end{cor}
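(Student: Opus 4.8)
The plan is to obtain this corollary as an immediate consequence of Theorem~\ref{thm:matroid-structure}; no new technique is needed, only bookkeeping with the pieces already in hand.

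\emph{Step 1: $\P_w = \bigsqcup_{v \geq w}\Q_v$.} I would argue by downward induction on Bruhat order (equivalently, on $\ell(w)$). When $w = w_0$ there is no $v > w_0$, so $\Q_{w_0} = \P_{w_0}$ and there is nothing to prove. In general, by definition $\P_w = \Q_w \cup \bigcup_{v > w}\P_v$, where part~(a) guarantees each $\P_v$ with $v \geq w$ lies in $\P_w$. By the inductive hypothesis $\P_v = \bigsqcup_{u \geq v}\Q_u$ for every $v > w$, and the union of these over all such $v$ is exactly $\bigcup_{u > w}\Q_u$. Part~(b) makes the $\Q_u$ pairwise disjoint, so this union is disjoint and $\Q_w$ is disjoint from it; hence $\P_w = \bigsqcup_{u \geq w}\Q_u$. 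Finally part~(c) gives $\Q_u = \emptyset$ whenever $u$ contains $123$, so only $123$-avoiding $u$ contribute and $\#\P_w = \sum_{v \geq w,\ v\ \text{avoids}\ 123}\#\Q_v$.

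\emph{Step 2: $\#\Q_v = C_{\ell_1+1}\cdots C_{\ell_k+1}$ for $123$-avoiding $v$.} This I would read off part~(d), which writes $\Q_v = \bigoplus_{i=1}^k \P_{A_i,n} \oplus \{v(L(v))\} \oplus \{n+R(v)\}$ with $A_1,\ldots,A_k$ the runs of anti-fixed points of $v$. The families in this $\oplus$-product are supported on pairwise disjoint subsets of $[2n]$ --- this is part of the content of the theorem, and is transparent from the description of $G(w)$ in Remark~\ref{rem:Qw-description} --- so the map sending a tuple of members to their disjoint union is a bijection and $\#(\mathcal F \oplus \mathcal G) = \#\mathcal F \cdot \#\mathcal G$. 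The two singleton factors contribute $1$ each, and $\#\P_{A_i,n} = C_{\#A_i+1}$: indeed $f_{A_i,n}$ is a bijection, so $\#\P_{A_i,n} = \#\P_{\#A_i}$, while $\#\P_m = \#\C_{m+1} = C_{m+1}$ by the isomorphism $\P_m \cong \C_{m+1}$ noted just before Lemma~\ref{lem:reverse-perm-positroid} together with $\#\C_{m+1} = \#\D_{m+1} = C_{m+1}$. With $\ell_i := \#A_i$ this gives $\#\Q_v = C_{\ell_1+1}\cdots C_{\ell_k+1} = g(v)$.

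Combining Steps~1 and~2 gives $\#\P_w = \sum_{v\geq w,\ v\ \text{avoids}\ 123} C_{\ell_1+1}\cdots C_{\ell_k+1}$, which is the corollary (and, as a special case, Theorem~\ref{thm:Pw-formula}). The only real hurdle is Theorem~\ref{thm:matroid-structure} itself; granting it, the sole points worth a sentence of care are the downward induction assembling the $\Q_v$ into a partition of $\P_w$ and the observation that $\oplus$ multiplies cardinalities on disjoint groundsets --- both routine.
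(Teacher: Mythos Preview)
Your proposal is correct and follows essentially the same route as the paper: the paper states the corollary as immediate from Theorem~\ref{thm:matroid-structure}, noting that $\P_w$ is the disjoint union of the $\Q_v$ over $123$-avoiding $v \geq w$ and that $\#\P_{K,n} = C_{\#K+1}$. Your Step~1 makes the disjoint-union decomposition explicit via downward induction and your Step~2 spells out the cardinality count, but these are exactly the bookkeeping details implicit in the paper's one-sentence justification.
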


We conclude this section with a few results on symmetries of positroids which will be useful later. For $x \in [2n]$, write $\bar{x} \eqdef 2n+1-x$.

\begin{thm} \label{thm:reflection} $\overline{\P}_w = \P_{w_0 w^{-1} w_0}$ for any $w \in S_n$. \end{thm}

\begin{proof}
This follows from the matrix identity $w_0 w^{-1} M_w w_0^{(2n)} = M_{w_0 w^{-1} w_0}$, where $w_0$ is the reverse permutation in $S_n$ and $w_0^{(2n)}$ is the reverse permutation in $S_{2n}$.
\end{proof}

\begin{cor} \label{cor:reflection} $u_{\overline{I}} = w_0 u_I^{-1} w_0$ for any $I \in {[2n] \choose n}$. \end{cor}

\begin{proof}
Theorem~\ref{thm:reflection} is equivalent to $W_{\overline{I}} = w_0 W_I^{-1} w_0$. Since $w \mapsto w_0 w^{-1} w_0$ is an automorphism of Bruhat order,
\begin{equation*}
u_{\overline{I}} = \max W_{\overline{I}} = w_0(\max W_I)^{-1} w_0 = w_0 u_I^{-1} w_0.
\end{equation*}
\end{proof}

If $M$ is a matroid with groundset $E$, then $\{E \setminus I : \text{$I$ a basis of $M$}\}$ is also the set of bases for a matroid, the \emph{dual matroid} $M^*$.

\begin{thm} \label{thm:dual} $\P_w^*$ is isomorphic to $\P_{w^{-1}}$ for any permutation $w$. \end{thm}

\begin{proof}
Set $w_n^* = (n+1)\cdots(2n)1\cdots n \in S_n$. Let us see that $w_n^* \P_w^* = \P_{w^{-1}}$. Since inversion is an automorphism of Bruhat order and $\P_w = \bigcup_{v \geq w} \Q_v$ by Theorem~\ref{thm:matroid-structure}, it is enough to show that $w_n^* \Q_v^* = \Q_{v^{-1}}$. Let $A$ be the set of anti-fixed points of $v$, and $A_1, \ldots, A_k$ the maximal intervals in $A$. Since $[n]$ is the disjoint union $L(v) \cup R(v) \cup A$, we have
\begin{equation*}
\Q_v^* = \{vR(v)\} \oplus \{L(v) + n\} \oplus \bigoplus_{i=1}^k \P_{A_i, n}^*.
\end{equation*}
Also, $L(v) = v^{-1}L(v^{-1})$ and $R(v) = v^{-1}R(v^{-1})$, so
\begin{equation*}
w_n^* \Q_v^* = \{v^{-1}L(v^{-1})\} \oplus \{R(v^{-1}) + n\} \oplus \bigoplus_{i=1}^k w_n^* \P_{A_i, n}^*.
\end{equation*}
The set of anti-fixed points of $v^{-1}$ is $n+1-A$, so all we need to do is show $w_n^* \P_{K, n}^* = \P_{n+1-K,n}$ for any interval $K \subseteq [n]$. When pushed through the isomorphism of $\P_{K,n}$ with $\C_{\#K+1}$ given at the beginning of this section, this identity becomes $w_0 \C_{\#K+1}^* = \C_{\#K+1}$, where $w_0 \in S_{\#K+1}$. But the latter identity is certainly true: it reflects the existence of the automorphism of the set of Dyck paths which reverses the path and interchanges the notions of upstep and downstep.
\end{proof}

\section{The Tutte polynomial of $\P_w$}
\label{sec:tutte-polynomial}

Theorem~\ref{thm:matroid-structure} writes $\P_w$ as the disjoint union of matroids $\Q_v$ over 123-avoiding permutations $v$ above $w$ in Bruhat order, with each $\Q_v$ isomorphic to a direct sum of Catalan matroids and a matroid with one basis. In this section we give an analogous formula for the Tutte polynomial of $\P_w$, writing it as a sum over 123-avoiding permutations $v$ above $w$ of certain modifications of the Tutte polynomials of the $\Q_v$. First we recall one definition of the Tutte polynomial.

\begin{defn}
Given a matroid $M$ with groundset $S$, the \emph{rank} of a subset $I \subseteq S$ is the maximal size of an intersection of $I$ with a basis of $M$. Write $\rank_M(I)$ for this number. The \emph{Tutte polynomial} of $M$ is then the bivariate generating function
\begin{equation*}
T_M(x,y) = \sum_{I \subseteq S} (x-1)^{\rank(M) - \rank_M(I)} (y-1)^{\#I - \rank_M(I)}.
\end{equation*}
Here $\rank(M)$ is the size of any basis of $M$.
\end{defn}

Let $T_n(x,y)$ be the Tutte polynomial of the matroid $\P_n$. If $M$ is the matroid on $\{2n+1,2n+2\}$ with bases $\{\{2n+1\}\}$, then $M \oplus \P_n$ is isomorphic to $\C_{n+1}$. The Tutte polynomial of $M$ is $xy$, and Tutte polynomials are multiplicative on direct sums, so $T_n(x,y)$ is the Tutte polynomial of $\C_{n+1}$ divided by $xy$.

Given a Dyck path $D$, let $\height(D)$ be the height of the first peak and $\touch(D)$ the number of times $D$ touches the $x$-axis, not counting the first. In \cite{catalan-matroid}, Ardila shows that
\begin{equation*} 
\sum_{D \in \D_n} x^{\height(D)} y^{\touch(D)}.
\end{equation*}
is the Tutte polynomial of $\C_n$. Hence
\begin{equation} \label{eq:ardila-tutte}
T_n(x,y) = \sum_{D \in \D_{n+1}} x^{\height(D)-1} y^{\touch(D)-1}.
\end{equation}

It is more natural to give $T_n(x,y)$ as a sum over $\P_n$ using the bijection to $C_{n+1}$ given at the beginning of Section~\ref{sec:Pw-structure}. Define a total order $\prec$ on $[2n]$ by
\begin{equation*}
n+1 \prec n \prec n+2 \prec n-1 \prec \cdots \prec 2n \prec 1.
\end{equation*}
For $I \in \P_n$, define $c(I)$ as the length of the longest $\prec$-initial segment of $[2n]$, and $d(I)$ as the number of integers $j \in [2n]$ such that $\#(I \cap [n+1,n+j]) = \#(I \cap [n-j+1,n-1])$. Then Ardila's formula \eqref{eq:ardila-tutte} translates to
\begin{equation} \label{eq:catalan-tutte-polynomial}
T_n(x,y) = \sum_{I \in \P_n} x^{c(I)} y^{d(I)}.
\end{equation}

Given an interval $K \subseteq [n]$, define a modified version of $T_n$ as follows:
\begin{equation*}
T_{K,n}(x,y) = \begin{cases}
T_{\#K}(x,y) & \text{if $K = [n]$}\\
T_{\#K}(x,1) & \text{if $1 \in K$ and $n \notin K$}\\
T_{\#K}(1,y) & \text{if $1 \notin K$ and $n \in K$}\\
T_{\#K}(1,1) & \text{if $1, n \notin K$}
\end{cases}
\end{equation*}
Notice that $T_{\#K+1}(1,1) = C_{\#K+1}$, the number of bases in $\P_{K,n}$. Also, given a $123$-avoiding $w \in S_n$ with runs of anti-fixed points $A_1, \ldots, A_k$, define
\begin{equation*}
U_w(x,y) = \prod_{i=1}^k T_{A_i, n}(x,y).
\end{equation*}

\begin{thm} \label{thm:tutte-polynomial} For any permutation $w \in S_n$, the Tutte polynomial of $\P_w$ is
\begin{equation*}
U_{w_0}(x,y) + (1 - (x-1)(y-1))\sum_{\substack{w \leq v < w_0 \\ \text{$v$ avoids $123$}}} U_v(x,y).
\end{equation*}
\end{thm}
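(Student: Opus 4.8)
The plan is to compute $T_{\P_w}(x,y)$ by exploiting the disjoint-union decomposition $\P_w = \bigsqcup_{v \geq w,\ v \text{ avoids } 123} \Q_v$ from Theorem~\ref{thm:matroid-structure}, and to track how the rank function of $\P_w$ restricts to subsets built out of the pieces $\Q_v$. The key point is that, although a disjoint union of sets of bases is \emph{not} in general a direct sum of matroids, the rank function of $\P_w$ on a subset $I$ of the groundset $\binom{[2n]}{n}$ can still be related to ranks of the constituent $\Q_v$'s and their (iterated) contractions/restrictions, and the $123$-avoiding structure makes this bookkeeping manageable.

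First I would rewrite the Tutte polynomial using the corank--nullity expansion, grouping subsets $I \subseteq \binom{[2n]}{n}$ according to which $123$-avoiding $v \geq w$ is ``responsible'' for them. Concretely, for a fixed $v$ the matroid $\Q_v$ has its own groundset $G(v)$ (in the notation of Remark~\ref{rem:Qw-description}), and $\Q_v \cong \bigoplus_i \P_{A_i,n} \oplus \{v(L(v))\} \oplus \{n+R(v)\}$. Since Tutte polynomials are multiplicative on direct sums and the two singleton-basis factors contribute a coloops/loops pattern, the Tutte polynomial of $\Q_v$ itself factors as $U_v(x,y)$ up to the contributions of the elements of $[2n]$ not in $G(v)$, which act as loops. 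I would next observe that $\rank_{\P_w}$ agrees with $\rank_{\Q_{w_0}}$ on the subsets of the groundset $G(w_0)$ of the ``bottom'' piece $\Q_{w_0}$ — this is where the leading term $U_{w_0}(x,y)$ comes from — and that every other $\Q_v$ (for $v \neq w_0$, $v$ avoiding $123$, $v \geq w$) contributes a correction supported away from $G(w_0)$.

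The heart of the argument is identifying the correction factor $1 - (x-1)(y-1)$. This should come from the fact that each non-bottom piece $\Q_v$ differs from being a genuine direct summand of $\P_w$ by exactly one element of the groundset that behaves simultaneously like an element whose deletion and contraction coincide — i.e., the corank--nullity sum over subsets ``owned by $v$'' produces $T_{\Q_v}$ evaluated so that one factor of $(x-1)(y-1)$ is replaced by the constant $1$ minus itself. I would make this precise by a direct manipulation of the corank--nullity generating function: partition $\binom{[2n]}{n}$-supported subsets by their ``type'' relative to the standardization-type map $I \mapsto u_I$ of Corollary~\ref{cor:W_I-max}, and for each $v$ sum $(x-1)^{\rank(\P_w)-\rank_{\P_w}(I)}(y-1)^{\#I - \rank_{\P_w}(I)}$ over the $I$ whose rank behavior is governed by $\Q_v$; this inner sum telescopes to $U_v(x,y)$ times $(1-(x-1)(y-1))$ for $v \neq w_0$ and to $U_{w_0}(x,y)$ for $v = w_0$. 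The substitutions $(x,1)$, $(1,y)$, $(1,1)$ appearing in the definition of $T_{K,n}$ reflect exactly whether the endpoints $1, n$ of a run $K$ of anti-fixed points are present, which determines whether the ``first peak height'' statistic $c$ and the ``returns to axis'' statistic $d$ in \eqref{eq:catalan-tutte-polynomial} are free or forced; I would verify this interval-by-interval using the matrix description $N_w$ and the explicit $U^\bullet D^\bullet$ form of Krattenthaler's path $K(w)$.

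**The main obstacle** I expect is precisely the step of showing the rank function of $\P_w$ decomposes cleanly across the $\Q_v$ pieces: a disjoint union of basis-sets need not have rank function expressible in terms of the pieces, so one must genuinely use the positroid/juggling-sequence description (Lemma~\ref{lem:positroid-simplification-2}, i.e.\ $I \in \P_w \iff v_I \geq w$) together with the fact that $u_I$ avoids $123$ to control $\rank_{\P_w}(I)$ for \emph{non-basis} subsets $I$. A clean way to sidestep worst-case subtleties is to work instead with the activities/internal--external decomposition or directly with \eqref{eq:catalan-tutte-polynomial}: every $n$-subset lies in a unique $\Q_v$, and for an arbitrary subset $I' \subseteq [2n]$ of size $\leq n$ one shows $\rank_{\P_w}(I')$ equals $\max$ over bases, which by the disjoint union is realized inside some $\Q_v$ with $v \geq w$ maximal subject to a Hall-type condition — the same Hall's-theorem argument used in part (d) of the proof of Theorem~\ref{thm:matroid-structure}. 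Granting that, the remaining computation is a (somewhat lengthy but routine) generating-function identity, and the final equality of polynomials can, if needed, be confirmed by checking it agrees at enough specializations, or simply by the fact that setting $x=y=1$ recovers Corollary~\ref{cor:123-avoiding-identity}-style counting for $\#\P_w$.
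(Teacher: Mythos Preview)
Your outline identifies the right obstacle --- controlling $\rank_{\P_w}(I)$ for \emph{arbitrary} subsets $I\subseteq[2n]$, not just bases --- but does not actually overcome it. The paper's proof rests on a separate result (Theorem~\ref{thm:rank-characterization}) which constructs, for every $I$ and every $r\geq 0$, a permutation $u_I^r$ with the property that $\rank_{\P_w}(I)\geq r$ if and only if $u_I^r\geq w$. This is done by defining $J_r(I)$ to be the $\prec$-lex-smallest $n$-set meeting $I$ in at least $r$ elements and setting $u_I^r=u_{J_r(I)}$; proving that this works requires two nontrivial lemmas (Lemmas~\ref{lem:respects-bruhat} and~\ref{lem:Jk-respects-diamond}) comparing $u_I$ to $u_J$ in Bruhat order when $I$ and $J$ differ by a single element. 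Your ``Hall-type condition'' remark gestures at this but is not a proof; the Hall argument in Theorem~\ref{thm:matroid-structure}(d) handles only $n$-subsets lying in a specific $\Q_w$, not arbitrary subsets of $[2n]$.

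The second gap is structural. The paper does \emph{not} compute $T_{\P_w}$ directly from the decomposition $\P_w=\bigsqcup_v \Q_v$; instead it applies M\"obius inversion on Bruhat order to reduce to computing the alternating sum $\sum_{v\geq w}(-1)^{\ell(v)-\ell(w)}T_v(x,y)$, and shows this equals $(1-(x-1)(y-1))U_w(x,y)$ for $w\neq w_0$ avoiding $123$ (and $0$ if $w$ contains $123$). The factor $1-(x-1)(y-1)$ then arises from a telescoping over the rank index $r$ in $\sum_r\sum_{I\in\Q_w^r}(x-1)^{n-r}(y-1)^{\#I-r}$, together with Lemma~\ref{lem:Jk-fiber} describing the fibers $J_r^{-1}(K)$ and Lemma~\ref{lem:modified-tutte} identifying $\sum_{K\in\Q_w}x^{c(K)}y^{\bar c(K)}$ with $U_w$. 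Your explanation that the factor comes from ``exactly one element of the groundset that behaves simultaneously like an element whose deletion and contraction coincide'' does not match what actually happens and would need to be replaced by this telescoping argument.
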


We start with a characterization of ranks in $\P_w$. Recall that Lemma~\ref{lem:positroid-simplification-2} associates to each $n$-subset $I$ of $[2n]$ an permutation $u_I \in S_n$ in such a way that $I \in \P_w$ if and only if $u_I \geq w$. We will follow a similar strategy here, and construct, for any nonnegative integer $r$ and any $I \subseteq [2n]$, a permutation $u_I^r$ such that $I$ has rank at least $r$ in $\P_w$ if and only if $u_I^r \geq w$.

Say $I \subseteq [2n]$ has size at least $r$. Define $J_r(I)$ to be the $\preceq$-lexicographically smallest $n$-set such that $\#(J_r(I) \cap I) \geq r$. Explicitly, if
\begin{equation*}
I = \{i_1 \prec i_2 \prec \cdots \} \qquad \text{ and} \qquad [2n] \setminus \{i_1, \ldots, i_r\} = \{j_1 \prec j_2 \prec \cdots \},
\end{equation*}
then $J_r(I) = \{i_1, \ldots, i_r, j_1, \ldots, j_{n-r}\}$. Now define $u_I^r = u_{J_r(I)}$.

\begin{thm} \label{thm:rank-characterization} The set $I$ has rank at least $r$ in $\P_w$ if and only if $J_r(I) \in \P_w$, or equivalently, $u_I^r \geq w$. \end{thm}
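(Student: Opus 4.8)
The plan is to prove the chain $\rank_{\P_w}(I) \geq r \iff J_r(I) \in \P_w \iff u_I^r \geq w$. The second equivalence is immediate from Lemma~\ref{lem:positroid-simplification-2}, since $u_I^r = u_{J_r(I)}$ by definition. One direction of the first equivalence is also easy: if $J_r(I) \in \P_w$, then $J_r(I)$ is a basis of $\P_w$ with $\#(J_r(I) \cap I) \geq r$ by construction, so $\rank_{\P_w}(I) \geq r$. The real content is the implication $\rank_{\P_w}(I) \geq r \implies J_r(I) \in \P_w$.

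For this I would work with $\P_w$ as the transversal matroid of the interval system $A_i = [w(i), i+n]$, $1 \leq i \leq n$, and with Hall's theorem. For $s \in [2n]$ put $N(s) = \{i \in [n] : s \in A_i\}$; a direct check gives $N(s) = w^{-1}([1,s])$ when $s \leq n$ and $N(s) = [s-n, n]$ when $s > n$. Thus $\{N(s)\}_{s \leq n}$ and $\{N(s)\}_{s > n}$ are chains under inclusion, $|N(s)|$ decreases as $s$ moves upward in the order $\prec$, and for any $X \subseteq [2n]$ the union $\bigcup_{s \in X} N(s)$ depends only on $\max(X \cap [1,n])$ and $\min(X \cap [n+1,2n])$. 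Using this chain structure together with Hall's theorem (in its deficiency form), both ``$X$ is independent in $\P_w$'' and ``$\rank_{\P_w}(I) \geq r$'' reduce to families of inequalities indexed by a threshold pair $a \leq n < b$: the first is equivalent to
\begin{equation*}
\#\big(X \cap ([1,a] \cup [b, 2n])\big) \leq \big|w^{-1}([1,a]) \cup [b-n, n]\big| \qquad \text{for all } a \le n < b,
\end{equation*}
and $\rank_{\P_w}(I) \geq r$ is equivalent to the ``slackened'' version $r - \#(I \cap (a,b)) \leq |w^{-1}([1,a]) \cup [b-n,n]|$ for all $a \le n < b$ (the threshold sets $X \cap ([1,a]\cup[b,2n])$ are the binding ones in Hall's condition because of the chain structure).

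It then remains to deduce the independence inequalities for $X = J_r(I)$ from the slackened inequalities for $I$. Here I would use the explicit description $J_r(I) = \{i_1, \ldots, i_r\} \sqcup \{j_1, \ldots, j_{n-r}\}$, where $i_1 \prec i_2 \prec \cdots$ enumerates $I$ and $j_1 \prec j_2 \prec \cdots$ enumerates $[2n] \setminus \{i_1, \ldots, i_r\}$: the adjoined ``filler'' elements $j_1, \ldots, j_{n-r}$ are the $\prec$-smallest available, hence the most central, hence they lie in the largest number of the $A_i$, so adjoining them cannot break a Hall inequality. The comparison of the two families of inequalities breaks into cases according to where the thresholds $a,b$ fall relative to the ``old'' block $\{i_1,\ldots,i_r\}$ and the ``filler'' block $\{j_1,\ldots,j_{n-r}\}$, together with a count of how $|w^{-1}([1,a]) \cup [b-n,n]|$ changes when one slides the thresholds between the two relevant values. (Equivalently, one can organize this into two lemmas: (A) if $I$ contains an independent set of size $r$ then its $\prec$-smallest $r$-element subset $\{i_1,\ldots,i_r\}$ is independent; (B) if $Y'$ is independent then the $\prec$-smallest $n$-set containing $Y'$ is a basis — and chaining them gives the implication, since the $\prec$-smallest $n$-set containing $\{i_1,\ldots,i_r\}$ is exactly $J_r(I)$.)

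I expect this last comparison to be the main obstacle. The tempting shortcut — ``if $C$ is a basis of $\P_w$, $x \in C$, $y \notin C$, and $y \prec x$, then $C - x + y$ is again a basis'' — is \emph{false} (for instance $\{1,2,7,8\} \in \P_{2134}$ but $\{1,6,7,8\} \notin \P_{2134}$, even though $6 \prec 2$), so one cannot transform an optimal basis into $J_r(I)$ by sliding a single element at a time; the chain/threshold structure has to be used globally. A related subtlety: in (B) the $\prec$-smallest completion $C^*$ of $Y'$ need not be the basis with $\prec$-largest associated permutation $u_{C^*}$, so the argument genuinely needs that $Y'$ is independent and not merely that $C^* \supseteq Y'$.
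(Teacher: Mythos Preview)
Your approach is sound and genuinely different from the paper's, but it leaves the hard step unproved.

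\textbf{What the paper does.} Rather than working with Hall's theorem at all, the paper argues on the ``dual'' side: it shows that the order ideal $W_I^r = \{w : \rank_{\P_w}(I) \geq r\}$ has $u_I^r$ as its unique maximum. First it reduces to $\#I = r$ by writing $W_I^r = \bigcup_{I'' \in \binom{I}{r}} W_{I''}^r$ and using two comparison lemmas (Lemmas~\ref{lem:respects-bruhat} and~\ref{lem:Jk-respects-diamond}) to see that the $\prec$-lex-least $r$-subset $I'$ dominates. Then, for $\#I = r$, it inducts downward on $r$ via $W_I^r = \bigcup_{x \notin I} W_{I\cup x}^{r+1}$, again invoking Lemma~\ref{lem:respects-bruhat}(b). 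Those lemmas are proved not by Hall-type counting but by directly manipulating the explicit injective word $v_I$, so the single-swap obstruction you flagged never arises: Lemma~\ref{lem:respects-bruhat}(b) only ever swaps an element lying in a $\prec$-initial segment of the current $n$-set, which is exactly the situation produced by the inductive construction of $J_r$.

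\textbf{What you do differently.} Your reduction to threshold inequalities is correct: independence is $\#(X\cap T_{a,b}) \leq |N_{T_{a,b}}|$, and $\rank(I)\geq r$ is the slackened version $r - \#(I\cap(a,b)) \leq |N_{T_{a,b}}|$, both for all $0\le a\le n < b\le 2n{+}1$. Your two sub-lemmas (A) and (B) are the right decomposition, and chaining them does recover the theorem. Since the theorem is true, (A) and (B) are in fact true. The transversal/Hall route is more self-contained (it doesn't need the $v_I$, $u_I$ machinery beyond the final translation) and makes the ``greedy towards the center'' heuristic explicit.

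\textbf{The gap.} You correctly diagnose that one cannot push a basis to $J_r(I)$ by single $\prec$-decreasing swaps, and you propose a case analysis on where the threshold $(a,b)$ falls relative to the two blocks of $J_r(I)$ --- but you do not actually carry out that analysis, and it is not routine. Concretely, for (B) the crux is: given $\#(Y'\cap T_{a,b}) \leq |N_{T_{a,b}}|$ for all $(a,b)$, deduce the same for $Y'$ together with the $\prec$-initial filler block. The filler block is a genuine interval $[p,q]$ around $n{+}1$, but $T_{a,b}$ need not meet it in a $\prec$-final segment, so the bookkeeping requires comparing the chosen $(a,b)$ against an adjusted pair $(a',b')$ tracking the ends of that interval, together with the identity $|N_{T_{a,b}}| = n - \#\{i < b-n : w(i) > a\}$. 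This is doable, but until it is written down your proposal is a strategy rather than a proof. If you want to avoid that case analysis, the paper's Lemma~\ref{lem:respects-bruhat} is precisely the tool that replaces it.
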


\begin{rem}
What is really important here is the partial order
\begin{equation*}
    n+1, n \prec n+2, n-1 \prec \cdots \prec 2n, 1.
\end{equation*}
One can show that although $J_r(I)$ depends on the choice of linear extension of this partial order to a total order, $u_I^r$ does not (indeed, this is a consequence of Theorem~\ref{thm:rank-characterization}).
\end{rem}

We postpone the proof of Theorem~\ref{thm:rank-characterization} since it is somewhat involved, and move on to its consequences for ranks in $\P_w$. Let $\P_w^r = \{I \subseteq [2n] : \text{$I$ has rank at least $r$ in $\P_w$}\}$, and $\Q_w^r = \P_w^r  \setminus \bigcup_{v > w} \P_v^r$. Theorem~\ref{thm:rank-characterization} shows that $\Q_w^r$ is the set of $I$ such that $u_I^r = w$. Equivalently, if we think of $J_r$ as a function $2^{[2n]} \to {[2n] \choose n}$, then $\Q_w^r = \bigcup_{K \in \Q_w} J_r^{-1}(K)$, and we can give a reasonable description of $J_r^{-1}(K)$ for a fixed $K$.

\begin{lem} \label{lem:Jk-fiber} Let $K$ be an $n$-subset of $[2n]$, and $0 \leq r \leq n$. Write $K = E \cup F$ where $E$ is the maximal initial segment of $[2n]$ in $K$ (in the order $\prec$). Then $J_r^{-1}(K)$ is the collection of sets of the form $E' \cup F \cup G$, where $E' \in {E \choose \#E-n+r}$ and $G \subseteq [2n]$ satisfies $\min(G) > \max(F)$.
\end{lem}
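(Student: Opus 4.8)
The plan is to unwind the definitions of $J_r$ and of the order $\prec$, and check directly that a set $I$ satisfies $J_r(I) = K$ precisely when $I$ has the claimed form. Recall the explicit description: if we list $I = \{i_1 \prec i_2 \prec \cdots\}$ and set $[2n] \setminus \{i_1, \ldots, i_r\} = \{j_1 \prec j_2 \prec \cdots\}$, then $J_r(I) = \{i_1, \ldots, i_r\} \cup \{j_1, \ldots, j_{n-r}\}$. So $J_r(I)$ is built by taking the $\prec$-smallest $r$ elements of $I$, then filling out to an $n$-set with the $\prec$-smallest available elements of $[2n]$. The key structural fact I would isolate first: $J_r(I)$ always contains $E_r(I) := \{i_1, \ldots, i_r\}$ (the $\prec$-initial $r$-element segment of $I$) together with a $\prec$-initial segment of $[2n] \setminus E_r(I)$ of the appropriate size. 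Writing $K = E \cup F$ with $E$ the maximal $\prec$-initial segment of $[2n]$ lying in $K$, I would show $E_r(I) \subseteq E$ and that the "filler" part of $J_r(I)$ is exactly $E \setminus E_r(I)$ together with... no: more carefully, $J_r(I) \setminus E_r(I)$ is a $\prec$-initial segment of the complement of $E_r(I)$, and for this to equal $K \setminus E_r(I)$ we need $F$ (the non-initial part of $K$) to be "pulled in" by being among the first $r$ elements of $I$ in the $\prec$-order, i.e. $F \subseteq E_r(I)$, hence $F \subseteq I$ and every element of $F$ is $\prec$-below every element of $I \setminus E_r(I)$.

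**Forward direction.** Assume $J_r(I) = K$. Since $K \supseteq E_r(I)$ and $K \setminus E_r(I)$ is a $\prec$-initial segment of $[2n] \setminus E_r(I)$, the elements of $K$ that are \emph{not} in a $\prec$-initial segment of $[2n]$ — that is, the elements of $F$ — must all lie in $E_r(I)$, so $F \subseteq I$. Moreover, because $E_r(I)$ is exactly the $r$ $\prec$-smallest elements of $I$, every element of $I \setminus E_r(I)$ is $\prec$-larger than every element of $E_r(I)$, in particular $\prec$-larger than every element of $F$; translating $\prec$ back into the ordinary order on $[2n]$ (where $\prec$-large means large positive integer, roughly "far from the center"), this is precisely the condition $\min(I \setminus E_r(I)) > \max(F)$ once one tracks how $\prec$ orders things — here one must be careful, since $\prec$ is $n{+}1 \prec n \prec n{+}2 \prec \cdots \prec 2n \prec 1$, so "$\prec$-initial segment" and the literal condition $\min(G) > \max(F)$ need to be reconciled using that $F$ and $E$ partition $K$ with $E$ the initial segment. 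Setting $E' = E \cap I = E_r(I) \setminus F$ — which has size $r - \#F$... I'd recompute: $\#E' = \#E - (n-r)$? — one gets $E' \in \binom{E}{\#E - n + r}$ and $G = I \setminus E_r(I)$ with $\min(G) > \max(F)$, as claimed. The bookkeeping on cardinalities is the fiddly part: $\#I \ge r$, $\#(K) = n$, $\#E_r(I) = r$, and since $E_r(I) = E' \sqcup F$ we need $\#E' + \#F = r$; separately $K = E \sqcup F$ gives $\#E + \#F = n$, so $\#E' = \#E + \#F - n + (\#E' + \#F - \#F)$... the cleanest route is just $\#E' = r - \#F = r - (n - \#E) = \#E - n + r$.

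**Reverse direction.** Conversely, given $I = E' \cup F \cup G$ with $E' \in \binom{E}{\#E - n + r}$ and $\min(G) > \max(F)$, I would verify $J_r(I) = K$ by identifying the $\prec$-smallest $r$ elements of $I$. Since $E' \cup F \subseteq K$ and (from $\min(G) > \max(F)$, plus the fact that $E$ is a $\prec$-initial segment so $G$, being disjoint from $E$ or at least $\prec$-above all of $F$) every element of $E' \cup F$ is $\prec$-below every element of $G$, and $\#(E' \cup F) = (\#E - n + r) + (n - \#E) = r$, the $\prec$-initial $r$-set of $I$ is exactly $E' \cup F = E_r(I)$. Then $J_r(I) = E' \cup F \cup (\text{$\prec$-initial $(n-r)$-set of } [2n] \setminus (E' \cup F))$. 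Because $E' \subseteq E$ and $E$ is a $\prec$-initial segment of $[2n]$, the $\prec$-smallest $n - r$ elements of $[2n] \setminus (E' \cup F)$ are $E \setminus E'$ (which has size $\#E - (\#E - n + r) = n - r$), using that $F \cap E = \emptyset$ so removing $F$ doesn't disturb $E \setminus E'$ being initial among the survivors. Hence $J_r(I) = E' \cup F \cup (E \setminus E') = E \cup F = K$.

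**Main obstacle.** The genuine content is light — it is a direct verification — but the \emph{main obstacle} is keeping the translation between the exotic order $\prec$ and the ordinary integer order straight, particularly reconciling "$\prec$-initial segment of $[2n] \setminus E_r(I)$" with the literal hypothesis "$\min(G) > \max(F)$ in the usual order." The safe approach is to prove a small preliminary observation pinning down, for the particular set $K = E \cup F$ arising here, exactly which elements of $[2n]$ are $\prec$-below $\max_\prec F$ and how that interacts with $E$ being the maximal ordinary-or-$\prec$ initial segment in $K$; once that dictionary is in place both inclusions are mechanical. I would also double-check the edge cases $r = 0$ (then $E' = \emptyset$ forces $\#E = n$, i.e. $K = E$, $F = \emptyset$, and $J_0(I) = K$ for all $I$ with $\min(I)$ unrestricted — consistent) and $E = [2n]$ versus $F = \emptyset$, to make sure the cardinality formula $\#E - n + r$ stays in range $[0, \#E]$.
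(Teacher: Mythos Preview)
Your approach is correct and essentially identical to the paper's: both argue by direct two-sided verification, identifying the $\prec$-first $r$ elements of $I$ as $E' \cup F$ and the filler $\{j_1,\dots,j_{n-r}\}$ as $E \setminus E'$. The paper does the reverse direction first and then the forward direction, but the content of each step matches yours, including the cardinality bookkeeping $\#E' = r - \#F = \#E - n + r$.

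Your flagged ``main obstacle'' is a non-issue: the condition $\min(G) > \max(F)$ in the lemma is meant in the order $\prec$ throughout (so it reads $\min_\prec G \succ \max_\prec F$), and the paper never translates back to the usual integer order. Once you read it that way, the fact that $E$ is a $\prec$-initial segment and $F = K \setminus E$ lies $\prec$-above $E$ immediately gives $G \cap E = \emptyset$ and $G \cap F = \emptyset$, so no separate dictionary lemma is needed. Your instinct to check the edge case $F = \emptyset$ is reasonable but harmless here: the representation $E' \cup G$ may fail to be unique, but the lemma only asserts equality of \emph{collections} of sets, and the later counting application in the paper takes place only for $K \in \Q_w$ with $w \neq w_0$, where one checks $F \neq \emptyset$.
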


\begin{proof} Say $I = E' \cup F \cup G$ where $E'$, $F$, $G$ are as in the statement of the lemma, and write $I = \{i_1 \prec i_2 \prec \cdots\}$. Since $\#E' + \#F = r$, we have $\{i_1, \ldots, i_r\} = E' \cup F$. Thus $[2n] \setminus \{i_1, \ldots, i_r\}$ contains $E \setminus E'$, which has size $n-r$. Since $E$ is an initial segment, the smallest $n-r$ elements of $[2n] \setminus \{i_1, \ldots, i_r\}$ are exactly $E \setminus E'$, so $J_r(I) = (E' \cup F) \cup (E \setminus E') = K$.

Conversely, suppose $J_r(I) = K$, with $[2n] \setminus I = \{j_1 \prec j_2 \prec \cdots\}$ as in the definition of $J_r$. Let $E'$ consist of the $\prec$-first $\#E-n+r$ elements of $I$ (noting that $\#E-n+r \leq r \leq \#I$). Since $E$ is an initial segment of size $\#E'+n-r$, we must have $E' \cup \{j_1 \prec \cdots \prec j_{n-r}\} = E$. But this forces $F = \{i_{\#E'+1} \prec \cdots \prec i_r\} \subseteq I$, and then defining $C = \{i_{r+1} \prec \cdots \prec i_n\}$ gives the desired decomposition $I = E' \cup F \cup G$.
\end{proof}

Finally, we will need a description of $U_w$ in the style of $\eqref{eq:catalan-tutte-polynomial}$. As above, let $c(K)$ be the length of the largest $\prec$-initial segment of $[2n]$ contained in $K$, and let $\bar{c}(K)$ be the length of the largest $\prec$-final segment in $[2n] \setminus K$.

\begin{lem} \label{lem:modified-tutte} For any $123$-avoiding $w \neq w_0$,
\begin{equation*}
U_w(x,y) = \sum_{K \in \Q_w} x^{c(K)} y^{\overline{c}(K)}.
\end{equation*}
\end{lem}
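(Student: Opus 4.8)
The plan is to reduce Lemma~\ref{lem:modified-tutte} to the single-run case via the direct-sum decomposition of $\Q_w$ in Theorem~\ref{thm:matroid-structure}(d), and then identify the single-run case with equation~\eqref{eq:catalan-tutte-polynomial}. Recall that for $123$-avoiding $w$ with runs of anti-fixed points $A_1, \ldots, A_k$, we have
\begin{equation*}
\Q_w = \bigoplus_{i=1}^k \P_{A_i, n} \oplus \{w(L(w))\} \oplus \{n + R(w)\},
\end{equation*}
so every $K \in \Q_w$ is a disjoint union $K = \bigsqcup_i K_i \sqcup w(L(w)) \sqcup (n + R(w))$ with $K_i \in \P_{A_i, n}$. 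The key observation is that $c(K)$ and $\bar c(K)$ \emph{split along this decomposition}: I will show $c(K) = \sum_i c_i(K_i)$ and $\bar c(K) = \sum_i \bar c_i(K_i)$, where $c_i, \bar c_i$ are the analogous statistics computed inside the groundset $Z_{\#A_i} A_i$ of $\P_{A_i,n}$ with respect to the restriction of $\prec$. Once this multiplicativity is established, the sum over $\Q_w$ factors as a product of sums over the $\P_{A_i,n}$, and the lemma reduces to showing $\sum_{K_i \in \P_{A_i,n}} x^{c_i(K_i)} y^{\bar c_i(K_i)} = T_{A_i,n}(x,y)$ for each run.

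For the splitting of $c$: the $\prec$-initial segments of $[2n]$ are the sets $\{n+1, n, n+2, n-1, \ldots\}$ built symmetrically around the center. I would argue that a $\prec$-initial segment $S$ is contained in $K$ iff its ``restriction'' to each block $Z_{\#A_i}A_i$ is a $\prec$-initial segment of that block contained in $K_i$, \emph{and} $S$ avoids $w(L(w))$ and $n+R(w)$ entirely — but the latter is automatic since $L(w)$ and $R(w)$ consist of left-to-right minima that are not right-to-left maxima and vice versa, so $w(L(w)) \subseteq [n-j+2, n]$-type positions that are not among the innermost elements; more carefully, one checks that any element of $w(L(w))$ or $n+R(w)$ lies $\prec$-after some element not in $K$. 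A clean way to see this: an element $x$ of $K$ is in the maximal $\prec$-initial segment contained in $K$ iff every $y \preceq x$ is in $K$; since the groundsets $Z_{\#A_i}A_i$ are nested intervals around the center and the ``strip'' elements $w(L(w)) \cup (n+R(w))$ interleave with missing elements, the initial segment can only ever reach into one block at a time. I expect the defining inequalities $\#(I \cap Z_j K) \ge j$ from Remark~\ref{rem:Qw-description} and the explicit interlacing of the blocks to make this bookkeeping routine once set up correctly. The statistic $\bar c$ is handled dually (using Theorem~\ref{thm:dual} / the complement-and-reverse symmetry of Dyck paths, or just directly by the same argument applied to $[2n] \setminus K$).

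For the single-run base case, I need $\sum_{K \in \P_{K_0,n}} x^{c(K)} y^{\bar c(K)} = T_{K_0,n}(x,y)$ for an interval $K_0 \subseteq [n]$. Via the increasing bijection $f_{K_0,n} : [2\#K_0] \to Z_{\#K_0}K_0$, which carries $\P_{\#K_0} = \P_{w_0}$ to $\P_{K_0,n}$ and is $\prec$-order-preserving onto the relevant sub-order, the statistic $c$ pulls back to the statistic $c$ of \eqref{eq:catalan-tutte-polynomial} on $\P_{\#K_0}$ \emph{provided} the initial segment never ``escapes'' the block — which, for a single run, it cannot. So when $K_0 = [n]$ we get exactly $T_{\#K_0}(x,y) = T_{K_0,n}(x,y)$. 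When $1 \in K_0$ but $n \notin K_0$, the $\prec$-initial segments of $[2n]$ can still reach the low-index end of the block, contributing the $x^{c}$ factor honestly, but the $\prec$-\emph{final} segments of $[2n]\setminus K$ cannot reach into the block from the top (there are missing strip-elements $\prec$-before them), so $\bar c \equiv 0$ on this block and we get $T_{\#K_0}(x,1)$; the remaining two cases are symmetric. \textbf{The main obstacle} I anticipate is precisely verifying that $c$ and $\bar c$ are additive over the block decomposition and correctly ``truncate'' to $0$ on blocks not meeting $1$ (resp. not meeting $n$) — this requires carefully tracking how the linear order $\prec$ on $[2n]$ interacts with the nested-interval groundsets $Z_{\#A_i}A_i$ and the interspersed singletons $w(L(w)) \cup (n+R(w))$, and is the one place where a genuine (if elementary) combinatorial argument is needed rather than a formal appeal to earlier results.
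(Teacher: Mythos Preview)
Your overall route is the paper's: use Theorem~\ref{thm:matroid-structure}(d) to write each $K\in\Q_w$ as $L_1\sqcup\cdots\sqcup L_k\sqcup w(L(w))\sqcup(n+R(w))$ with $L_i\in\P_{A_i,n}$, argue that $c(K)$ and $\bar c(K)$ each depend on at most one $L_i$, and then identify the per-run contribution with the appropriate specialization of $T_{\#A_i}$ via \eqref{eq:catalan-tutte-polynomial} and the self-duality of $\P_n$ (Theorem~\ref{thm:dual}).

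However, your stated additivity $c(K)=\sum_i c_i(K_i)$, with $c_i$ the \emph{internal} statistic (longest $\prec|_{Z_{\#A_i}A_i}$-initial segment contained in $K_i$), is false. Take $n=5$, $w=53241$: here $A_1=\{1\}$, $A_2=\{5\}$, and choosing $K_1=\{6\}\in\P_{\{1\},5}$, $K_2=\{10\}\in\P_{\{5\},5}$ gives $c_1(K_1)=c_2(K_2)=1$, yet the assembled $K=\{2,3,6,9,10\}$ has $c(K)=1$, not $2$. The second block's internal initial segment is invisible to the global $\prec$-order because elements of $[2n]$ between the two groundsets (here $5,7,4,8$) are not all in $K$.

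The correct statement---which you effectively reach by the end of your proposal and which the paper proves cleanly---is that $c(K)$ equals $c(L_1)$ if $1\in A_1$ (equivalently $w(1)=n$) and is $0$ otherwise; dually $\bar c(K)=\bar c(L_k)$ if $n\in A_k$ and is $0$ otherwise. The paper's argument for this is short: if $\{n+1,\ldots,n+\alpha\}\subseteq K$ then positions $1,\ldots,\alpha$ must be right-to-left maxima of $w$, which forces $w(j)=n-j+1$ there; the same reasoning on $\{n,n-1,\ldots,n-\beta+1\}$ shows those positions are anti-fixed too. So any $\prec$-initial segment in $K$ lies entirely in $L_1$. The four cases $w(1)\overset{?}{=}n$, $w(n)\overset{?}{=}1$ then match the four branches of the definition of $T_{A_i,n}$, and the hypothesis $w\neq w_0$ is used precisely to ensure $A_1\neq A_k$ when both equalities hold, so that the $x$- and $y$-contributions sit in distinct factors of the product.
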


\begin{proof}
Suppose $w$ avoids $123$, and has runs of anti-fixed points $A_1, \ldots, A_k$. Then any $K \in \Q_w$ is a disjoint union
\begin{equation*}
L_1 \cup \cdots \cup L_k \cup w(L(w)) \cup (R(w)+n),
\end{equation*}
where $L_i \in \P_{A_i,n}$.

Suppose $K \in \Q_w$ contains as a maximal $\prec$-initial segment $E = \{n+1, \ldots, n+\alpha, n, n-1, \ldots, n-\beta+1\}$ for some $\alpha, \beta$. By definition of $\Q_w$, this means $w$ has right-to-left maxima in positions $1, \ldots, \alpha$. But this is only possible if $w$ has anti-fixed points in those positions. Likewise, $w$ has left-to-right minima with values $n, n-1, \ldots, n-\beta+1$, hence anti-fixed points in positions $1, \ldots, \beta$. This shows that $E \subseteq L_1$ if $w(1) = n$, and that $E = \emptyset$ if $w(1) \neq n$. Hence $c(K) = c(L_1)$ if $w(1) = n$, and $c(K) = 0$ otherwise. An analogous argument shows that $\bar{c}(K) = \bar{c}(L_r)$ if $w(n) = 1$, and $\bar{c}(K) = 0$ otherwise. Now we see that:
\begin{itemize}
\item If $w(1) \neq n$ and $w(n) \neq 1$, then
\begin{equation*}
\sum_{K \in \Q_w} x^{c(K)} y^{\overline{c}(K)} = \#\Q_w = \prod_{i=1}^k T_{\#A_i}(1,1) = U_w(x,y).
\end{equation*}

\item If $w(1) = n$ and $w(n) \neq 1$, then using \eqref{eq:catalan-tutte-polynomial},
\begin{align*}
\sum_{K \in \Q_w} x^{c(K)} y^{\overline{c}(K)} &= \sum_{L \in \P_{\#A_1}} x^{c(L)} \prod_{i=2}^k T_{\#A_i}(1,1) \\
&=  T_{\#A_1}(x,1)\prod_{i=2}^k T_{\#A_i}(1,1) = U_w(x,y).
\end{align*}

\item If $w(1) \neq n$ and $w(n) = 1$, then
\begin{align*}
\sum_{K \in \Q_w} x^{c(K)} y^{\overline{c}(K)} &= \sum_{L \in \P_{\#A_k}} y^{\overline{c}(L)} \prod_{i=1}^{k-1} T_{\#A_i}(1,1)\\
&= \sum_{L \in \P_{\#A_k}} y^{c(L)} \prod_{i=1}^{k-1} T_{\#A_i}(1,1)\\
&= T_{\#A_k}(y,1) \prod_{i=1}^{k-1} T_{\#A_i}(1,1)
\end{align*}
To get the second equality, we use the fact from Theorem~\ref{thm:dual} that $I \mapsto w_n^*([2n] \setminus I)$ is an automorphism of $\P_n$, and that it exchanges the statistics $c$ and $\bar{c}$. Taking the dual of a matroid corresponds to switching the variables in the Tutte polynomial, so $T_n(x,y) = T_n(y,x)$ since $\P_n$ is self-dual. Thus
\begin{equation*}
\sum_{K \in \Q_w} x^{c(K)} y^{\overline{c}(K)} = T_{\#A_k}(1,y) \prod_{i=1}^{k-1} T_{\#A_i}(1,1) = U_w(x,y).
\end{equation*}

\item If $w(1) = n$ and $w(n) = 1$, then $k > 1$ since $w \neq w_0$, and 
\begin{align*}
\sum_{K \in \Q_w} x^{c(K)} y^{\overline{c}(K)} &= \sum_{L \in \P_{\#A_1}} x^{c(L)} \sum_{L \in \P_{\#A_k}} y^{\overline{c}(L)} \prod_{i=2}^{k-1} T_{\#A_i}(1,1) \\
&= T_{\#A_1}(x,1) T_{\#A_k}(1,y)  \prod_{i=2}^{k-1} T_{\#A_i}(1,1) = U_w(x,y).
\end{align*}
\end{itemize}
\end{proof}

Let $T_w(x,y)$ be the Tutte polynomial of $\P_w$. Recall that Theorem~\ref{thm:tutte-polynomial} claims that
\begin{equation*}
    T_w(x,y) = U_{w_0}(x,y) + (1 - (x-1)(y-1))\sum_{\substack{w \leq v < w_0 \\ \text{$v$ avoids $123$}}} U_v(x,y).
\end{equation*}
    The M\"obius function of Bruhat order on $S_n$ is $\mu(w,v) = (-1)^{\ell(v) - \ell(w)}$. By M\"obius inversion, for any particular $w \in S_n$, Theorem~\ref{thm:tutte-polynomial} is equivalent to
\begin{equation*}
\sum_{v \geq w} (-1)^{\ell(v) - \ell(w)} T_v(x,y) = \begin{cases}
U_{w_0}(x,y) = T_n(x,y) & \text{if $w = w_0$}\\
(1 - (x-1)(y-1))U_w & \text{if $w \neq w_0$ avoids $123$}\\
0 & \text{if $w$ contains $123$}
\end{cases}.
\end{equation*}

\begin{proof}[Proof of Theorem~\ref{thm:tutte-polynomial}]
Write $\rank_v(I)$ for the rank of $I$ in $\P_v$. By Theorem~\ref{thm:rank-characterization}, $\rank_v(I) = r$ if and only if $v \leq u_I^r$ and $v \not\leq u_I^{r+1}$. Thus,
\begin{align*}
\sum_{v \geq w} (-1)^{\ell(v) - \ell(w)} T_v(x,y) &= \sum_{v \geq w} (-1)^{\ell(v) - \ell(w)} \sum_{I \subseteq [2n]} (x-1)^{n-\rank_v(I)} (y-1)^{\#I-\rank_v(I)}\\
&= \sum_{I \subseteq [2n]} \sum_{r=0}^n (x-1)^{n-r} (y-1)^{\#I-r} \sum_{v \in [w, u_I^r] \setminus [w, u_I^{r+1}]} (-1)^{\ell(v) - \ell(w)}.
\end{align*}
The term $(x-1)^{n-r}(y-1)^{\#I-r}$ will occur frequently, so we will simply write $f$ for it in the rest of the proof.

Any Bruhat interval with more than one element has the same number of elements of even length and of odd length \cite{bjorner-brenti}, so
\begin{equation*}
\sum_{v \in [w, u_I^r] \setminus [w, u_I^{r+1}]} (-1)^{\ell(v) - \ell(w)} = \begin{cases}
0 & \text{if $w \neq u_I^r$ and $w \neq u_I^{r+1}$}\\
1 & \text{if $w = u_I^r > u_I^{r+1}$}\\
-1 & \text{if $u_I^r > u_I^{r+1} = w$}
\end{cases}
\end{equation*}
Observe that $w = u_I^r > u_I^{r+1}$ if and only if $I \in \Q_w^r \setminus \Q_w^{r+1}$, and $u_I^r > u_I^{r+1} = w$ if and only if $I \in \Q_w^{r+1} \setminus \Q_w^r$. Therefore
\begin{align*}
\sum_{v \geq w} (-1)^{\ell(v) - \ell(w)} T_v(x,y) &= \sum_{r=0}^n \left[\sum_{I \in \Q_w^r \setminus \Q_w^{r+1}} f - \sum_{I \in \Q_w^{r+1} \setminus \Q_w^r} f \right]\\
&= \sum_{r=0}^n \left[\sum_{I \in \Q_w^r} f - \sum_{I \in \Q_w^{r+1}} f\right]\\
&= \sum_{r=0}^n \sum_{I \in \Q_w^r} f - (x-1)(y-1)\sum_{r=1}^{n+1} \sum_{I \in \Q_w^r} f.
\end{align*}

We may as well assume $w \neq w_0$, in which case $\Q_w^0 = \emptyset$. Also, $\Q_w^{n+1} = \emptyset$ for any $w$, so
\begin{align*}
\sum_{v \geq w} (-1)^{\ell(v) - \ell(w)} T_v(x,y) &= [1 - (x-1)(y-1)]\sum_{r=0}^n \sum_{I \in \Q_w^r} f\\
&= [1 - (x-1)(y-1)]\sum_{r=0}^n \sum_{K \in \Q_w} \sum_{I \in J_r^{-1}(K)} f.
\end{align*}
As in Lemma~\ref{lem:modified-tutte}, let $c(K)$ denote the length of the largest initial segment of $[2n]$ in $K$ in the order $\prec$, and $\bar{c}(K)$ the length of the largest final segment in $[2n] \setminus K$. By Lemma~\ref{lem:Jk-fiber}, a member of $J_r^{-1}(I)$ with size $j+r$ corresponds to a choice of (1) a $(c(K)-n+r)$-subset of a set of size $c(K)$, and (2) a $j$-subset of the maximal $\prec$-final segment of $[2n] \setminus K$. Hence
\begin{align*}
\sum_{I \in J_r^{-1}(K)} f &= \sum_{I \in J_r^{-1}(K)} (x-1)^{n-r}(y-1)^{\#I-r}\\
 &= {c(K) \choose c(K)-n+r} (x-1)^{n-r} \sum_{j=0}^{\overline{c}(K)} {\bar{c}(K) \choose j} (y-1)^j\\
 &= {c(K) \choose c(K)-n+r} (x-1)^{n-r} y^{\bar{c}(K)}.
\end{align*}

Continuing on,
\begin{align*}
\sum_{v \geq w} (-1)^{\ell(v) - \ell(w)} T_v(x,y) &= [1 - (x-1)(y-1)] \sum_{K \in \Q_w} \sum_{r=0}^n {c(K) \choose c(K)-n+r} (x-1)^{n-r} y^{\bar{c}(K)}\\
&= [1 - (x-1)(y-1)] \sum_{K \in \Q_w} \sum_{r=0}^n {c(K) \choose c(K)-n+r} (x-1)^{n-r} y^{\bar{c}(K)}\\
\end{align*}
This is equal to $[1 - (x-1)(y-1)]U_w(x,y)$ by Lemma~\ref{lem:modified-tutte}.
\end{proof}

To prove Theorem~\ref{thm:rank-characterization}, we will need some lemmas giving a Bruhat relation between $u_I$ and $u_J$ for two sets $I$ and $J$. Write $I \unlhd J$ if $I$ is the $\prec$-lexicographically minimal $\#I$-subset of $I \cup J$. Equivalently, $I \unlhd J$ if and only if $J \cap I = J \cap [\max_{\prec}(I)]$. This is a partial order on finite subsets of $\N$ of a fixed size.

\begin{lem} \label{lem:respects-bruhat} Suppose $I, J \in {[2n] \choose n}$ are such that either
\begin{enumerate}[(a)]
\item $I \unlhd J$, or
\item $J = I \setminus \{i\} \cup \{j\}$, where $i$ is contained in a $\prec$-initial segment in $I$ and $i \preceq j$.
\end{enumerate}
Then $u_I \geq u_J$.
\end{lem}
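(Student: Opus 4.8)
The plan is to reduce the Bruhat inequality $u_I \ge u_J$ to set-theoretic containments among the $\P_w$ and then verify those from an explicit description of $\P_w$. By Corollary~\ref{cor:W_I-max}, $W_I = \{w \in S_n : I \in \P_w\}$ is the principal order ideal of $u_I$ in Bruhat order (it is downward closed by Lemma~\ref{lem:positroid-simplification-2} and has $u_I$ as its unique maximal element), and likewise for $W_J$; hence $u_I \ge u_J$ is equivalent to $W_J \subseteq W_I$, and for that it suffices to prove $u_J \in W_I$, i.e. $I \in \P_{u_J}$, i.e. $v_I \ge u_J$. Since $u_J = v_J \wedge w_0$ and the proof of Lemma~\ref{lem:word-meet} identifies $u_J([r])$ with the componentwise minimum $\min\bigl(v_J([r]), w_0([r])\bigr)$, the tableau criterion turns the desired inequality into the family of componentwise inequalities
\[
B_r(I) \;\ge\; \min\bigl(B_r(J),\ \{n-r+1, \dots, n\}\bigr), \qquad r = 1, \dots, n,
\]
where $B_r(X)$ is as in Lemma~\ref{lem:positroid-simplification-1} (equal to $v_X([r])$ as a set). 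Unwinding that definition, one checks that $B_r(X)$ is exactly the set of the $r$ largest elements of $X \cap [1, n+r]$, which is the form I would use.

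Next I would exploit the hypotheses. For (a), $I \unlhd J$ means $I$ is the set of the $n$ $\prec$-smallest elements of $I \cup J$, so every element of $J \setminus I$ is $\prec$-above every element of $I$. For (b), let $S$ be the minimal $\prec$-initial segment with $i \in S \subseteq I$, so $i = \max_\prec S$; since the $\prec$-initial segments are precisely the intervals $[n-k+1, n+k]$ (even length, $\prec$-maximum $n-k+1$) and $[n-k+1, n+k+1]$ (odd length, $\prec$-maximum $n+k+1$), the containment $S \subseteq I$ forces $[n-k+1, n] \subseteq I$, and $i \preceq j$ forces $j \notin S$. In either case I would compare $B_r(I)$ with $B_r(J)$ coordinate by coordinate, splitting on whether the deleted and inserted elements lie in $[1,n]$ or in $(n,2n]$. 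When both lie in $[1,n]$, or both lie in $(n,2n]$, one gets $B_r(I) \ge B_r(J)$ coordinatewise with room to spare and is done; the only delicate case is when an element of $I \cap [1,n]$ is deleted and a $\prec$-larger element $j = n+t'$ of $(n,2n]$ is inserted, for then $j$ converts a low-value slot of $v$ into a high-value one and a few coordinates of $B_r(J)$ overshoot those of $B_r(I)$.

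The main obstacle is handling exactly that delicate case, and the mechanism is the $w_0$-cap: at each coordinate $l$ where the $J$-value $d_l$ exceeds the $I$-value $c_l$, one must show $n - r + l \le c_l$, so that $\min(d_l, n-r+l) \le c_l$. This amounts to showing $I \cap [1,n]$ contains a sufficiently long top block $\{n-r+l, \dots, n\}$; in case (b) that is guaranteed by $[n-k+1,n] \subseteq I$ (equivalently, $i_c = n - |I \cap [1,n]| + c$ for all $c$ beyond the threshold cut out by $S$), and in case (a) by the weaker fact that $J \setminus I$ sits $\prec$-above $I$. Verifying, uniformly in $r$, that the cap always catches precisely the overshooting coordinates is the real content of the argument; case (a) runs along the same lines but is genuinely easier. (One could alternatively attack $W_J \subseteq W_I$ directly via Hall's theorem on the transversal presentation of $\P_w$ as the transversal matroid of $\{[w(k), n+k]\}_k$, rerouting a system of distinct representatives for $J$ into one for $I$, but the deficiency bookkeeping there is essentially the same.)
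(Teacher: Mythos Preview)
Your reduction is sound: the equivalence $u_I \ge u_J \iff I \in \P_{u_J} \iff v_I \ge u_J$, together with the identification of $u_J([r])$ as the coordinatewise minimum $\min(B_r(J), w_0([r]))$ from Lemma~\ref{lem:word-meet}, does reduce the problem to the family of inequalities $B_r(I) \ge \min(B_r(J), \{n-r+1,\dots,n\})$. Your description of $B_r(X)$ as the $r$ largest elements of $X \cap [1,n+r]$ is also correct.

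The gap is in the casework. Your claim that ``when both $i,j$ lie in $(n,2n]$ one gets $B_r(I) \ge B_r(J)$ coordinatewise'' is false. Take $n=3$, $I = \{1,4,6\}$, $i=4$, $j=5$: here $i = n+1$ is trivially a $\prec$-initial segment in $I$ and $i \prec j$, so case~(b) applies; but $B_2(I) = \{1,4\}$ while $B_2(J) = \{1,5\}$, so the second coordinate overshoots. (The $w_0$-cap does rescue this, since $\min(5, n-r+2) = 3 \le 4$; but that means this case is exactly as delicate as your ``only delicate case,'' and your stated mechanism for the cap---the inclusion $[n-k+1,n] \subseteq I$---is vacuous here because the initial segment around $i = n+1$ is just $\{n+1\}$.) You also do not address the mixed case $i > n$, $j \le n$, though that one does turn out to be easy once you use the initial-segment hypothesis.

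The paper's proof is organised differently and avoids this difficulty. Rather than invoking the $w_0$-cap at all, it shows directly that $v_J \le v_I$ \emph{entrywise} (hence in Bruhat order, hence $u_J \le u_I$) whenever $j \le n$, by tracking how the decreasing subsequence of $v_I$ shifts when $i$ is removed and $j$ inserted. It then disposes of the case $j > n$ in one stroke via the reflection $x \mapsto 2n+1-x$ and Corollary~\ref{cor:reflection}: reflecting exchanges the roles of $[1,n]$ and $(n,2n]$, so every remaining case becomes an instance of one already handled. Your approach can be repaired either by carrying out a genuine cap analysis in the ``both $>n$'' case (noting that the initial segment then gives $[n+1,n+k_1] \subseteq I$ rather than an interval in $[1,n]$), or---more economically---by borrowing the same reflection trick.
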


\begin{proof}
For the case where $I \unlhd J$, we may assume that $J = I \setminus \{i\} \cup \{j\}$ where $i \in I$ and $j \succ \max_{\prec}(I)$, since this is the covering relation for $\unlhd$. Recall the injective word $v_I$, with the property that $u_I$ is the greatest lower bound of $v_I$ and $w_0$, and whose entries are $I \cap [n+1,2n]$ in increasing order together with $I \cap [n]$ in decreasing order.

Suppose for the moment that $j \leq n$. In passing from $v_I$ to $v_J$, we remove one entry ($i$), insert a new entry ($j$) into the decreasing subsequence formed by $I \cap [n]$ in the unique way that keeps the subsequence decreasing, and then shift part of the subsequence either right or left to fill the gap left by $i$. If $i \leq n$, then $j \succ i$ implies $j < i$. Thus, $j$ enters right of the gap left by $i$, so we shift leftward. This means that $v_J$ is entrywise less than or equal to $v_I$, which implies the weaker statement that $v_J \leq v_I$ in Bruhat order. Therefore $u_J \leq u_I$.

Next suppose that $j \leq n$ still, but now $i > n$. We consider cases (a) and (b) separately. In case (b), where $i$ is contained in a $\prec$-initial segment in $I$, $v_I$ begins $(n+1)(n+2)\cdots (n+b)\cdots$, with $i$ being one of those first $b$ entries. Thus, every entry of the decreasing sequence is right of $i$, and in particular $j$ does enter to the right of it when we pass to $v_J$. In case (a), we have $j \succeq \max_{\prec}(I)$, which implies $j \leq \min(I)$ (in the usual order), so $j$ will be the last entry in the decreasing sequence in $v_J$. In particular, $j$ enters right of the gap where $i$ was. In both cases we end up with $v_J$ entrywise less than or equal to $v_I$ as before, as in the last paragraph.

Finally, assume that $j > n$. We will apply the map $x \mapsto \overline{x} = 2n+1-x$ and use Corollary~\ref{cor:reflection}. The arguments above only depend on $\prec$ being a linear extension of the partial order
\begin{equation*}
n+1,n \prec n+2,n-1 \prec \cdots \prec 2n,1
\end{equation*}
and so they still go through if we replace $\prec$ with the total order $\mathrel{\overline{\prec}}$ defined by
\begin{equation*}
n \mathrel{\overline{\prec}} n+1 \mathrel{\overline{\prec}} n-1 \mathrel{\overline{\prec}} n+2 \mathrel{\overline{\prec}} \cdots \mathrel{\overline{\prec}} 1 \mathrel{\overline{\prec}} 2n.
\end{equation*}
The hypotheses of the lemma still hold for $\overline{I}$, $\overline{i}$, and $\overline{j}$ using the order $\mathrel{\overline{\prec}}$.

As $\overline{j} \leq n$, the previous arguments show that $u_{\overline{I}} \geq u_{\overline{J}}$, or $w_0 u_I^{-1} w_0 \geq w_0 u_J^{-1} w_0$ by Corollary~\ref{cor:reflection}. Since $w \mapsto w_0 w^{-1} w_0$ is an automorphism of Bruhat order, this is equivalent to $u_I \geq u_J$.
\end{proof}

\begin{lem} \label{lem:Jk-respects-diamond} Say $I, I' \in {[2n] \choose r}$, where $r \leq n$. If $I \unlhd I'$, then $J_r(I) \unlhd J_r(I')$. \end{lem}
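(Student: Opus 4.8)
The statement concerns only the order $\unlhd$ and the map $J_r$ on subsets of $[2n]$ --- no permutations enter --- so I would argue directly from the definitions. The plan begins with two reformulations. Since $\#I=\#I'=r$, the requirement ``$\#(B\cap I)\ge r$'' in the definition of $J_r$ just says $I\subseteq B$, so $J_r(I)$ is the $\preceq$-lex least $n$-set containing $I$: it is $I$ together with the $\prec$-smallest $n-r$ elements of $[2n]\setminus I$. And for equal-size sets, $I\unlhd I'$ means $I$ is a $\prec$-initial segment of $I\cup I'$, equivalently $\max_\prec I\prec\min_\prec(I'\setminus I)$, equivalently $I'\cap\{z:z\preceq\max_\prec I\}\subseteq I$. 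It is convenient to identify $[2n]$ with $\{1,\dots,2n\}$ so that $\prec$ becomes $<$; I will do so and write $M=\max I$.

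Then I would split on whether $M$ lies in the $\prec$-initial segment $\sigma_n$ of size $n$. If $M\le n$, then $I\subseteq\sigma_n$, so $J_r(I)=\sigma_n$, the lex-least $n$-set of all. For any $n$-set $A$ one has $A\setminus\sigma_n\subseteq[2n]\setminus\sigma_n$, and $[2n]\setminus\sigma_n$ consists exactly of the elements $\succ\max_\prec\sigma_n$; hence $\sigma_n\unlhd A$. Taking $A=J_r(I')$ gives $J_r(I)\unlhd J_r(I')$, so this case is immediate.

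The substantive case is $M\ge n+1$. Here $[2n]\setminus I$ has $M-r\ge n-r$ elements below $M$, so its $n-r$ smallest elements are all $<M$; thus $J_r(I)\subseteq\{1,\dots,M\}$ and, since $M\in I\subseteq J_r(I)$, $\max J_r(I)=M$. It therefore suffices to show $J_r(I')\setminus J_r(I)\subseteq\{M+1,\dots,2n\}$, i.e.\ that every $y\in J_r(I')$ with $y\le M$ lies in $J_r(I)$: this forces $\max J_r(I)=M\prec\min_\prec(J_r(I')\setminus J_r(I))$, which is $J_r(I)\unlhd J_r(I')$. If $y\in I'$, then $y\in I'\cap\{1,\dots,M\}\subseteq I\subseteq J_r(I)$ by $I\unlhd I'$. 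If $y\in J_r(I')\setminus I'$ and $y\in I$, we are done; if instead $y\notin I$, I would count holes below $y$: because $\{1,\dots,y-1\}\subseteq\{1,\dots,M\}$, the inclusion $I'\cap\{1,\dots,M\}\subseteq I$ gives $\#(I'\cap\{1,\dots,y-1\})\le\#(I\cap\{1,\dots,y-1\})$, hence
\[
\#\{z\in[2n]\setminus I: z<y\}\ \le\ \#\{z\in[2n]\setminus I': z<y\}\ <\ n-r,
\]
the last inequality because $y$ is among the $n-r$ smallest elements of $[2n]\setminus I'$. So $y$ is among the $n-r$ smallest elements of $[2n]\setminus I$, i.e.\ $y\in J_r(I)$.

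The only step needing care is this last one --- transferring ``$y$ is one of the first $n-r$ holes of $I'$'' to ``$y$ is one of the first $n-r$ holes of $I$'' --- which rests entirely on the monotonicity $\#(I'\cap\{1,\dots,y-1\})\le\#(I\cap\{1,\dots,y-1\})$ for $y\le M$, the single quantitative consequence of $I\unlhd I'$ that we use; everything else is bookkeeping. (One could instead reduce to the generating moves ``$I'=I\setminus i\cup j$ with $j\succ\max_\prec I$'' used in the proof of Lemma~\ref{lem:respects-bruhat} and induct, but the direct two-case argument above is shorter. Presumably the lemma will then be combined with Lemma~\ref{lem:respects-bruhat}(a) to deduce $u_I^r\ge u_{I'}^r$ whenever $I\unlhd I'$, en route to Theorem~\ref{thm:rank-characterization}.)
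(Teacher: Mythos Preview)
Your proof is correct, and it takes a genuinely different route from the paper's. The paper reduces to the covering relation for $\unlhd$ --- assuming $I' = I \setminus i \cup j$ with $j \succ \max_\prec I$ --- and then computes $J_r(I')$ explicitly by a three-case analysis comparing $i$, $j$, and $j_{n-r}$ (the last ``filler'' element of $J_r(I)$), checking in each case that the new element of $J_r(I')$ exceeds $\max_\prec J_r(I)$. Your argument instead works directly with arbitrary $I \unlhd I'$: you identify $\max_\prec J_r(I)$ with $M = \max_\prec I$ (once $M$ is past the $n$th position) and then use the single quantitative consequence $\#(I'\cap\{z\preceq y\}) \le \#(I\cap\{z\preceq y\})$ for $y \preceq M$ to transfer hole-rank from $I'$ to $I$. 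This avoids induction on the length of a chain of covers and is arguably cleaner; the paper's explicit description of $J_r(I')$, on the other hand, dovetails with the similar case analysis appearing later in the proof of Theorem~\ref{thm:rank-characterization}. You are also right that the intended use is exactly to combine with Lemma~\ref{lem:respects-bruhat}(a) in that proof.
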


\begin{proof}
As in the proof of Lemma~\ref{lem:respects-bruhat}, we can assume that $I' = I \setminus \{i\} \cap \{j\}$, where $i \in I$ and $j \succ \max_{\prec}(I)$. Write $I = \{i_1 \prec \cdots \prec i_r\}$ and $[2n] \setminus I = \{j_1 \prec j_2 \prec \cdots\}$, so $J_r(I) = \{i_1, \ldots, i_r, j_1, \ldots, j_{n-r}\}$. There are several cases.

\begin{itemize}
\item If $j \preceq j_{n-r}$, then $J_r(I') = J_r(I)$.
\item If $i \preceq j_{n-r} \prec j$, then $J_r(I') = J_r(I) \setminus \{j_{n-r}\} \cup \{j\}$. Here $j \succ \max_{\prec}(I)$ and $j \succeq j_{n-r}$, so $j \succ \max_{\prec} J_r(I)$.
\item If $j \prec i \preceq j$, then $J_r(I') = J_r(I) \setminus \{i\} \cup \{j\}$. Once again, $j \succ \max_{\prec}(I)$ and $j \succ i \succeq j_{n-r}$, so $j \succ \max_{\prec} J_r(I)$.
\end{itemize}
\end{proof}

We can now prove Theorem~\ref{thm:rank-characterization}; recall it claims that $I \in \P_w$ has rank $\geq r$ if and only if $u_I^r \geq w$.

\begin{proof}[Proof of Theorem~\ref{thm:rank-characterization}]
Define $W_I^r = \{w \in S_n : I \in \P_w^r\}$.  It is clear from Theorem~\ref{thm:matroid-structure} that $W_I^r$ is a lower order ideal in Bruhat order. Theorem~\ref{thm:rank-characterization} is equivalent to the assertion that $u_I^r$ is the unique maximal element of $W_I^r$.

First we reduce to the case where $\#I = r$. Notice that $I \in \P_w^r$ if and only if $I' \in \P_w^r$ for some $r$-subset $I'$ of $I$. Equivalently,
\begin{equation*}
W_I^r = \bigcup_{I' \in {I \choose r}} W_{I'}^r.
\end{equation*}
If $I'$ is the $\prec$-lexicographically least $r$-subset of $I$, then $J_r(I') = J_r(I)$, so $u_{I'}^r = u_I^r$. For any other $r$-subset $I''$ of $I$, we have $I' \unlhd I''$. Lemma~\ref{lem:Jk-respects-diamond} then says $J_r(I') \unlhd J_r(I'')$, so Lemma~\ref{lem:respects-bruhat} implies $u_{I'}^r \geq u_{I''}^r$. Thus if we knew that each $W_{I''}^r$ has $u_{I''}^r$ as a unique maximum, we would be done: the unique maximum of $W_I^r$ would be $u_{I'}^r$. In other words, we can assume $\#I = r$.

Now we induct (downward) on $r$, assuming $\#I = r$. If $r = n$, then $W_I^r = W_I$ has $u_I^r = u_I$ as its unique maximum by Theorem~\ref{thm:matroid-structure}. Suppose $r < n$. Then $I \in \P_w^r$ if and only if $I \cup x \in \P_w^{r+1}$ for some $x \notin I$, or equivalently,
\begin{equation*}
W_I^r = \bigcup_{x \notin I} W_{I \cup x}^{r+1}.
\end{equation*}
By induction, each $W_{I \cup x}^{r+1}$ has $u_{I \cup x}^{r+1}$ as its unique maximal element. What we want to show, therefore, is that if $x \notin I$, then $u_I^r \geq u_{I \cup x}^{r+1}$, with equality holding for some $x$.

As in the definition of $J_r(I)$, write $I = \{i_1 \prec \cdots \prec i_r\}$ and $[2n] \setminus I = \{j_1 \prec j_2 \prec \cdots\}$, so that $J_r(I) = \{i_1, \ldots, i_r, j_1, \ldots, j_{n-r}\}$. Then
\begin{equation*}
J_{r+1}(I \cup x) = \begin{cases}
J_r(I) & \text{if $x \preceq j_{n-r}$}\\
J_r(I) \setminus \{j_{n-r}\} \cup \{x\} & \text{if $x \succ j_{n-r}$}
\end{cases}
\end{equation*}
In particular, if $x$ is $\prec$-minimal in $[2n] \setminus I$, then $x \preceq j_{n-r}$, so $u_I^r = u_{I \cup x}^{r+1}$.

We can now assume that $x \succ j_{n-r}$. By definition, $j_{n-r}$ is part of a $\preceq$-initial segment in $J_r(I)$, so Lemma~\ref{lem:respects-bruhat} shows that
\begin{equation*}
u_I^r = u_{J_r(I)} \geq u_{J_{r+1}(I \cup x)} = u_{I \cup x}^{r+1}.
\end{equation*}
\end{proof}

\section{Transversal matroids associated to permutation diagrams} \label{sec:diagram-matroids}

In this section we give some conjectures to the effect that results like Theorem~\ref{thm:matroid-structure} and Theorem~\ref{thm:tutte-polynomial} hold for another family of rank $n$ matroids on $[2n]$ indexed by $S_n$.

\begin{defn} The \emph{Rothe diagram} of $w \in S_n$ is
\begin{equation*}
    D(w) \eqdef \{(i,w(j)) \in [n] \times [n] : i < j, w(i) > w(j)\}.
\end{equation*}
\end{defn}

Given $w \in S_n$, let $\tilde{M}_w$ be a generic $n \times 2n$ matrix $[I_n \mid A]$, where $I_n$ is an $n\times n$ identity matrix, and $A$ is $n \times n$ with $A_{ij} = 0$ whenever $(i,j) \in D(w)$. The \emph{diagram matroid} $DM_w$ of $w$ is the matroid of $\tilde{M}_w$.

\begin{ex}
Say $w = 31524$. Then
\begin{equation*}
D(w) = \begin{array}{ccccc}
\circ & \circ & \cdot & \cdot & \cdot\\
\cdot & \cdot & \cdot & \cdot & \cdot\\
\cdot & \circ & \cdot & \circ & \cdot\\
\cdot & \cdot & \cdot & \cdot & \cdot\\
\cdot & \cdot & \cdot & \cdot & \cdot
\end{array},
\end{equation*}
where we use matrix coordinates, and $\circ$ for lattice points in $D(w)$, $\cdot$ for those not in $D(w)$. The diagram matroid of $w$ is then the matroid of a generic matrix
\begin{equation*}
\begin{bmatrix}
1 & 0 & 0 & 0 & 0 & 0 & 0 & * & * & *\\
0 & 1 & 0 & 0 & 0 & * & * & * & * & *\\
0 & 0 & 1 & 0 & 0 & * & 0 & * & 0 & *\\
0 & 0 & 0 & 1 & 0 & * & * & * & * & *\\
0 & 0 & 0 & 0 & 1 & * & * & * & * & *
\end{bmatrix}.
\end{equation*}
\end{ex}

\begin{conj} \label{conj:diagram-matroid-basis-count}
Theorem~\ref{thm:Pw-formula} holds for $DM_w$. That is, for any $w \in S_n$, the number of bases of $DM_w$ is
\begin{equation*}
\sum_{\substack{v \geq w \\ \text{$v$ avoids $123$}}} C_{\ell_1+1} \cdots C_{\ell_k+1},
\end{equation*}
where $\ell_1, \ldots, \ell_k$ are the lengths of the runs of anti-fixed points of $v$.
\end{conj}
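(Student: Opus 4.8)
The plan is to mirror, as closely as possible, the chain of reductions that proves Theorem~\ref{thm:Pw-formula}. The key observation is that $DM_w$ is again a transversal matroid: writing $\tilde{M}_w = [A \mid I_n]$ with $A_{ij}=0$ exactly on $D(w)$, the matroid $DM_w$ is the transversal matroid of the set system $\{C_i \cup \{n+i\} : i \in [n]\}$, where $C_i = \{j : (i,j) \notin D(w)\}$ is the complement of row $i$ of the diagram. So I would first try to realize $DM_w$ as a positroid, exactly as $\P_w$ was: by exhibiting a bounded affine permutation whose juggling sequence encodes the row-complements of $D(w)$. If that succeeds, one gets the analogue of Lemma~\ref{lem:positroid-simplification-2}, namely a map $I \mapsto \tilde v_I$ from $\binom{[2n]}{n}$ to some poset of words such that $I \in DM_w \iff \tilde v_I \ge w$, and then the whole scaffolding of Section~\ref{sec:Pw-structure} would carry over: define $\tilde\Q_w = DM_w \setminus \bigcup_{v>w} DM_v$, show the relevant word-meet exists so these are disjoint, show $\tilde\Q_w$ is empty when $w$ contains $123$, and identify $\#\tilde\Q_w$ with $g(w)$ for $123$-avoiding $w$. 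The enumeration then follows from Corollary~\ref{cor:123-avoiding-identity} by the same counting argument used at the end of the proof of Theorem~\ref{thm:matroid-structure}: the nonempty $\tilde\Q_w$ partition the bases of $DM_{12\cdots n}$, which (since $D(12\cdots n)=\emptyset$) is all of $\binom{[2n]}{n}$, and the total count is $\binom{2n}{n} = \sum_{v \text{ avoids }123} g(v)$, forcing $\#\tilde\Q_w = g(w)$ termwise.

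The first genuine subtlety is the containment statement $DM_w \subseteq DM_v$ for $v \le w$. This is not formal: it asserts that making a permutation larger in Bruhat order only shrinks the diagram matroid. Since $D(w)$ grows with $\ell(w)$ in a controlled way (the diagram of $w$ has $\ell(w)$ cells, and covering relations in Bruhat order correspond to a single transposition), I would try to prove the covering case directly — if $w \lessdot wt_{ab}$, show that every transversal of the column-complement system for $wt_{ab}$ is also one for $w$ — or deduce it from a positroid description via the analogue of Lemma~\ref{lem:positroid-simplification-2}. This is likely where the cleanest argument requires the positroid realization, so establishing that realization is really the crux.

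The main obstacle, and the reason this is only a conjecture, is that $DM_w$ need \emph{not} be a positroid in general: Rothe/diagram positioning can fail the interval structure that made $\P_w$ a rank variety, so there may be no bounded affine permutation at all. If that is the case, one loses Lemma~\ref{lem:positroid-simplification-2} and must instead argue combinatorially with Hall's theorem about transversals of the system $\{C_i \cup \{n+i\}\}$ — proving directly that the maximal $w$ with a given basis is $123$-avoiding and that the fiber over a $123$-avoiding $w$ has size $g(w)$. The hard part will be this direct transversal analysis: isolating which structural feature of $D(w)$ (runs of anti-fixed points, i.e.\ runs where $w(i)=n-i+1$, corresponding to anti-diagonal runs in the diagram) produces the Catalan factors $C_{\ell_i+1}$, without the positroid juggling-sequence machinery to lean on. I would expect the $w=w_0$ base case (where $D(w_0)$ is the full staircase below the anti-diagonal and one should again recover the Catalan matroid $\C_{n+1}$, up to relabeling) to be checkable by hand and to suggest the right bijection for the general fiber count.
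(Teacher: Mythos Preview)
The statement is a \emph{conjecture} in the paper, not a theorem, so there is no proof to compare against. That said, your primary strategy runs into a concrete obstruction that the paper explicitly flags: the containment $DM_w \subseteq DM_v$ for $v \le w$ in Bruhat order can \emph{fail}. The paper states this directly after the conjecture (``Theorem~\ref{thm:matroid-structure} no longer holds, as it can happen that $w \leq v$ but $DM_v \not\subseteq DM_w$''). This kills the decomposition $\tilde\Q_w = DM_w \setminus \bigcup_{v>w} DM_v$ as a route to the formula: without nesting, the $\tilde\Q_w$ need not be disjoint, and $DM_w$ need not equal $\bigcup_{v \ge w} \tilde\Q_v$. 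Since your hoped-for positroid realization together with the analogue of Lemma~\ref{lem:positroid-simplification-2} would have \emph{implied} this nesting, its failure also confirms your suspicion that $DM_w$ is not a positroid in general; so that entire line cannot be salvaged, not merely the covering-relation step.

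What the paper does instead is reformulate the conjecture via shifting. It defines a shift operator $\shift_w$ on both matrices and set families, proves $\shift_w M_w = \tilde M_w$ (Lemma~\ref{lem:matrix-shift}), and shows that matrix shifting is compatible with set-family shifting up to a containment (Lemma~\ref{lem:matroid-shift-containment}), yielding $DM_w \subseteq \shift_w \P_w$. Since shifting preserves cardinality, $\#\shift_w \P_w = \#\P_w$, and hence Conjecture~\ref{conj:diagram-matroid-basis-count} is equivalent to the equality $DM_w = \shift_w \P_w$ (Conjecture~\ref{conj:shifting}). The paper stops there. Your fallback of a direct Hall-theorem transversal analysis is not pursued; the paper's remark that ``a less trivial sign-reversing involution is required'' suggests a M\"obius-inversion approach rather than a structural decomposition as the more promising route.
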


Theorem~\ref{thm:matroid-structure} no longer holds: it can happen that $w \leq v$ but $DM_v \not\subseteq DM_w$. One can still hope to prove Conjecture~\ref{conj:diagram-matroid-basis-count} by M\"obius inversion, but a less trivial sign-reversing involution would be required. Conjecture~\ref{conj:diagram-matroid-basis-count} would follow from a stronger conjecture on Tutte polynomials.
\begin{conj} \label{conj:diagram-matroid-tutte} For any $w \in S_n$, the Tutte polynomial of $DM_w$ is equal to the Tutte polynomial of $\P_w$. \end{conj}

 If $DM_w$ and $\P_w$ were isomorphic, then Conjecture~\ref{conj:diagram-matroid-tutte} would of course be true, but this need not be the case.

\begin{conj} The matroids $DM_w$ and $\P_w$ are isomorphic if and only if $w$ avoids the pattern $21354$. \end{conj}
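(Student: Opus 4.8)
The plan is to realize both $DM_w$ and $\P_w$ through the transversal presentations read off from $\tilde M_w$ and $M_w$ — namely $\P_w$ is presented by the intervals $A_i=[w(i),n+i]$ and $DM_w$ by the sets $B_i=([w(i),n]\cup w([i]))\cup\{n+i\}$, where $w([i])=\{w(1),\dots,w(i)\}$ — and then to compare these two transversal matroids through a complete isomorphism invariant. The cleanest choice is the rank- and corank-decorated poset of cyclic flats, since it determines a matroid up to isomorphism and is computable from a transversal presentation. Thus the first step is to compute the cyclic flats of $\P_w$ (for which Theorem~\ref{thm:matroid-structure}, or the positroid / lattice-path description, is available) and of $DM_w$ (directly from the $B_i$, the subtlety being to identify which columns $w(k)$, $k\le i$, are already contained in $[w(i),n]$ and hence contribute nothing new), and to read off from these computations exactly which $w$ make the two lattices agree. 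I would emphasize up front that the enumerative invariant cannot be what distinguishes the two matroids when $w=21354$, since Conjecture~\ref{conj:diagram-matroid-basis-count} predicts equal basis counts; one really needs the cyclic-flat data (or, if that conjecture and a Tutte analogue are eventually settled, a Tutte coefficient not forced to coincide).

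For the ``if'' direction the natural strategy is induction, with the $123$-avoiding permutations as the base case. When $w$ avoids $123$, Theorem~\ref{thm:matroid-structure} pins down $\P_w$ completely, and one verifies that the extra columns carried by $DM_w$ over $\P_w$ are inessential (do not affect the cyclic flats), so $DM_w\cong\P_w$. For general $w$ avoiding $21354$ one reduces to this case by an induction on $n$ together with the reflection and duality symmetries of Theorems~\ref{thm:reflection} and~\ref{thm:dual} and Corollary~\ref{cor:reflection}, which both $DM$ and $\P$ respect. The combinatorial heart should be a lemma of the form: \emph{if $w$ avoids $21354$, then for each $i$ the set $\{r>i : w(r)>w(i)\}$ of rows that $\tilde M_w$ adjoins to column $w(i)$ beyond those appearing in $M_w$ lies in the $\P_w$-span of the rows already meeting that column, so adjoining it changes neither the matroid nor its cyclic flats.} A $21354$-occurrence — two ``spread-out'' descents with an appropriately valued entry between them — is precisely the obstruction to this: it forces one of the adjoined row-sets to span a flat that has no counterpart in $\P_w$.

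For the ``only if'' direction I would first confirm by a finite computation that $DM_{21354}\not\cong\P_{21354}$, by exhibiting a decorated cyclic flat of one with no match in the other. It then remains to propagate non-isomorphism upward: if $w$ contains $21354$ in positions $p_1<\dots<p_5$, then deleting and contracting, in $M_w$ (resp.\ $\tilde M_w$), the groundset elements outside rows $\{p_1,\dots,p_5\}$ and the matching columns should produce a minor of $\P_w$ isomorphic to $\P_{21354}$ — essentially the statement that pattern containment yields compatible transversal minors — and, in parallel, a minor of $DM_w$ isomorphic to $DM_{21354}$; if moreover this deletion/contraction set is recoverable intrinsically from $\P_w$ (say as the complement of a canonical union of cyclic flats), then any isomorphism $DM_w\cong\P_w$ would induce $DM_{21354}\cong\P_{21354}$, contradicting the base case.

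I expect this last point to be the main obstacle: an abstract isomorphism $DM_w\to\P_w$ need not send ``the pattern minor on one side'' to ``the pattern minor on the other'', so one must either show the relevant minor is cut out by isomorphism-invariant data, or else replace the minor argument by a direct construction — cleaner, but more ambitious — of a cyclic flat of $DM_w$ whose rank/corank profile violates the Grassmann-necklace constraints that every positroid, in particular every $\P_{w'}$, must satisfy. A secondary, more pedestrian difficulty is the volume of case analysis needed to carry the ``if''-direction reduction out uniformly over all $21354$-avoiding $w$; the symmetries above will compress this but probably not remove it.
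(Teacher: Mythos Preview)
The statement you are attempting to prove is labelled a \emph{Conjecture} in the paper, and the paper offers no proof: it only records that the equivalence has been checked computationally through $S_7$. There is therefore no ``paper's own proof'' to compare against. What you have written is not a proof either, but an outline with several steps you yourself flag as unfinished or obstructed; it should be read as a research plan rather than an argument, and judged on whether the plan is plausible.

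On that score, a few concrete issues. First, your fallback of distinguishing the two matroids by ``a Tutte coefficient not forced to coincide'' collides with the very next conjecture in the paper, which asserts that $DM_w$ and $\P_w$ have \emph{equal} Tutte polynomials for every $w$; if that holds, no Tutte data can separate them even at $w=21354$, so cyclic flats (or something finer still) really are forced on you. Second, the phrase ``the extra columns carried by $DM_w$ over $\P_w$'' is not well-defined: both matroids live on $[2n]$, and the support patterns of $M_w$ and $\tilde M_w$ are not related by inclusion. The paper's own device for comparing them is the shifting operator $\shift_w$ of Section~\ref{sec:diagram-matroids} (Lemma~\ref{lem:matrix-shift}), which gives $\shift_w M_w=\tilde M_w$ and hence $DM_w\subseteq\shift_w\P_w$; any serious attack on the conjecture should probably go through this, and your outline does not mention it. Third, the obstacle you name in the ``only if'' direction is genuine and unresolved: an abstract isomorphism $DM_w\to\P_w$ has no reason to respect the minor you build from a $21354$ occurrence, and producing isomorphism-invariant data that pins down that minor would already be most of the work. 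In short, the proposal does not settle the conjecture, and neither does the paper.
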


These conjectures have all been verified through $S_7$. Despite this, their Tutte polynomials seem to agree, also verified through $S_7$.

There is a combinatorial procedure called \emph{shifting} that relates $DM_w$ and $\P_w$ (and which has geometric connections making it useful in studying positroid varieties and other subvarieties of Grassmannians \cite{knutson-interval-positroid-varieties, pawlowski-liu-conjecture-rank-varieties}). Given integers $i$ and $j$, and a set $I$, let
\begin{equation*}
\shift_{i\to j} I = \begin{cases}
I \setminus \{i\} \cup \{j\} & \text{if $i \in I$ and $j \notin I$}\\
I & \text{else}
\end{cases}
\end{equation*}
If $X$ is a collection of sets, and $I \in X$, then we define
\begin{equation*}
\shift_{i\to j, X} I = \begin{cases}
\shift_{i \to j} I & \text{if $\shift_{i \to j} I \neq I$ and $\shift_{i \to j} I \notin X$}\\
I & \text{else}
\end{cases}
\end{equation*}
Finally, define $\shift_{i\to j} X$ to be $\{\shift_{i\to j, X} I : I \in X\}$.

Let $\mathcal{B}(A)$ denote the set of bases of the matroid of a matrix $A$. We can also apply shifting to matrices. Let $\shift_{i\to j}A$ be the matrix of the same size as $A$ such that
\begin{equation*}
A_{pq} = \begin{cases}
A_{pi} & \text{if $q = j$ and $A_{pj} = 0$}\\
0      & \text{if $q = i$ and $A_{pj} = 0$}\\
A_{pq} & \text{else}
\end{cases}
\end{equation*}

We have $\#(\shift_{i \to j} X) = \#X$, but it need not be the case that $\mathcal{B}(\shift_{i \to j} A) = \shift_{i \to j} \mathcal{B}(A)$. For example, if $A$ is a $2\times 2$ identity matrix, then $\shift_{2 \to 1}\mathcal{B}(A) = \{12\}$, while $\mathcal{B}(\shift_{i \to j}A)$ is empty. In general, we only get a containment.

\begin{lem} \label{lem:matroid-shift-containment} If the entries of $A$ are algebraically independent, then $\mathcal{B}(\shift_{i\to j} A) \subseteq \shift_{i \to j}\mathcal{B}(A)$. \end{lem}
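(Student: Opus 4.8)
The plan is to show that every basis of $\shift_{i\to j} A$ is also a basis of $\shift_{i\to j}\mathcal{B}(A)$ by tracking what the matrix shift does to the maximal minors. Fix a set $I \in \mathcal{B}(\shift_{i\to j} A)$, so the $n\times n$ submatrix of $\shift_{i\to j} A$ in columns $I$ has nonzero determinant. I would first dispose of the easy cases: if $j \notin I$, or if $i \in I$, then the relevant columns of $\shift_{i\to j} A$ agree with those of $A$ in the positions that matter (and if $i,j$ are both outside $I$ the two matrices literally have the same columns $I$), so either $I \in \mathcal{B}(A)$ and $I$ is unchanged by $\shift_{i\to j}$ on the level of sets, or a short argument shows $I$ is the image of some basis. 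The substantive case is $j \in I$, $i \notin I$.

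In that case, write $I' = I \setminus j \cup i$. The point is that column $j$ of $\shift_{i\to j}A$ is obtained from column $j$ of $A$ by overwriting each entry $A_{pj}$ that is zero with $A_{pi}$, and leaving the already-nonzero entries alone; column $i$ of $\shift_{i\to j}A$ has been zeroed out exactly in those rows $p$ where $A_{pj}=0$. Expanding $\det$ of the submatrix of $\shift_{i\to j}A$ in columns $I$ as a polynomial in the entries, each monomial is supported on a transversal (a choice of one nonzero entry per column), and the entry chosen in column $j$ is either some original $A_{pj}\neq 0$ or some $A_{pi}$ with $A_{pj}=0$. I would argue that because the entries of $A$ are algebraically independent, no cancellation occurs between these monomials, so nonvanishing of the determinant gives an honest transversal; if its column-$j$ entry is an original $A_{pj}$, then $I \in \mathcal{B}(A)$ and $\shift_{i\to j}$ fixes $I$ (since $i\notin I$... wait, need $\shift_{i\to j}I = I$ because $\shift_{i\to j}I$ either equals $I$ or removes $i$ — here $i\notin I$ so it's fixed), while if its column-$j$ entry is some $A_{pi}$ with $A_{pj}=0$, then replacing that entry by $A_{pi}$ in the column-$i$ slot shows the submatrix of $A$ in columns $I'$ has a nonzero minor, i.e. $I' \in \mathcal{B}(A)$; moreover $I' = \shift_{i\to j}^{-1}$-type preimage and $\shift_{i\to j, \mathcal{B}(A)} I' = I$ provided $I \notin \mathcal{B}(A)$. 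Here I would use algebraic independence again: if both $I \in \mathcal{B}(A)$ and $I'\in\mathcal{B}(A)$ held, then $\shift_{i\to j, \mathcal{B}(A)}$ would fix $I'$, and I'd need to rule this out — but actually if $I \in \mathcal{B}(A)$ we are already done by the first horn, so there is no real conflict; the clean statement is "$I\in\mathcal{B}(A)$ or ($I'\in\mathcal{B}(A)$ and $I\notin\mathcal{B}(A)$)", and in either case $I \in \shift_{i\to j}\mathcal{B}(A)$.

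The main obstacle I anticipate is making the "no cancellation, so a nonzero determinant yields an actual transversal of the shifted matrix" step rigorous while the shifted matrix has \emph{repeated} entries (the $A_{pi}$ now appears both in its old column $i$ — possibly, in rows where $A_{pj}\neq 0$ — and in column $j$). So the determinant of $\shift_{i\to j}A$ restricted to columns $I$ is a polynomial in the $A_{pq}$ in which some variables appear in two columns, and one must check that the monomial coming from a genuine transversal of $\shift_{i\to j}A$ in columns $I$ cannot be killed by another transversal contributing the same monomial. I would handle this by noting that within columns $I$ (with $j\in I$, $i\notin I$) no variable is actually repeated: $A_{pi}$ only appears in column $j$, not in column $i$, because column $i \notin I$; the duplication of $A_{pi}$ occurs in column $i$, which is absent from $I$. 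Hence the submatrix in columns $I$ of $\shift_{i\to j}A$ still has algebraically independent (indeed, distinct) entries, its determinant is a nonzero polynomial iff it has a transversal of nonzero entries, and the argument above goes through cleanly. This observation — that in the relevant submatrix the shift merely substitutes column $j$ by a column whose support is the union of the supports of the old columns $i$ and $j$, with genuinely distinct generic entries — is the key that makes everything routine.
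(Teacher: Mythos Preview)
Your treatment of the case $i \notin I$, $j \in I$ is correct and matches the paper's argument closely; the observation that the submatrix in columns $I$ still has distinct algebraically independent entries (since column $i$ is absent) is exactly what makes the transversal argument go through cleanly there.

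The gap is in your dismissal of the cases with $i \in I$ as ``easy.'' You assert that when $i \in I$ the relevant columns of $\shift_{i\to j}A$ ``agree with those of $A$ in the positions that matter,'' but this is false when both $i, j \in I$: column $j$ of the shifted matrix has \emph{gained} nonzero entries (the $A_{pi}$ from rows $p$ with $A_{pj}=0$), so a transversal $\pi$ of $\shift_{i\to j}A$ in columns $I$ may use such a moved entry at $(\pi(j),j)$ and fail to be a transversal of $A$. The paper handles this by observing that $(\shift_{i\to j}A)_{\pi(i),i}\neq 0$ forces $A_{\pi(i),j}\neq 0$, so swapping the values $\pi(i)$ and $\pi(j)$ yields a genuine transversal of $A$ in columns $I$. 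This swap is the missing ingredient, and it is not subsumed by anything you wrote.

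The case $i \in I$, $j \notin I$ also needs more than you indicate. You do get $I \in \mathcal{B}(A)$ (since column $i$ only lost entries), but here $\shift_{i\to j}I = I \setminus i \cup j \neq I$, so for $I$ to lie in $\shift_{i\to j}\mathcal{B}(A)$ you must show $I \setminus i \cup j \in \mathcal{B}(A)$ as well, so that $\shift_{i\to j,\mathcal{B}(A)}$ fixes $I$ rather than sending it to $I\setminus i\cup j$. This again uses the same observation: $(\shift_{i\to j}A)_{\pi(i),i}\neq 0$ forces $A_{\pi(i),j}\neq 0$, giving a transversal of $A$ in columns $I\setminus i\cup j$. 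Your phrase ``a short argument shows $I$ is the image of some basis'' gestures at this but does not supply it.
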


\begin{proof} Suppose $I \in \mathcal{B}(\shift_{i \to j} A)$, where $I = \{b_1 < \cdots < b_n\}$. Then there is a transversal of $A$ in columns $I$, i.e. a bijection $\pi : I \to [n]$ such that $(\shift_{i \to j} A)_{\pi(b_p)b_p} \neq 0$ for each $p$. We consider various cases.

\begin{itemize}
\item If $i, j \notin I$, then $I \in \mathcal{B}(A)$ and $\shift_{i \to j}I = I$, so $I \in \shift_{i \to j}\mathcal{B}(A)$.

\item If $i \in I$, $j \notin I$, then again $I \in \mathcal{B}(A)$, because $\shift_{i \to j}A$ restricted to columns $I$ is $A$ restricted to columns $I$ with some nonzero entries made zero. Since $(\shift_{i\to j}A)_{\pi(i)i}$ is nonzero, $A_{\pi(i)j}$ must be nonzero. Therefore the bijection $\pi' : I \setminus \{i\} \cup \{j\} \to [n]$ which agrees with $\pi$ on $I \setminus \{i\}$ and having $\pi'(j) = \pi(i)$ is a transversal of $A$. This shows that $\mathcal{B}(A)$ also contains $\shift_{i \to j}I$. But then $I \in \shift_{i \to j}\mathcal{B}(A)$.

\item Suppose $i \notin I$, $j \in I$. If  $A_{\pi(j)i} \neq 0$, then modifying $\pi$ appropriately as in the last case will give a transversal of $A$ in columns $I \setminus \{j\} \cup \{i\}$. Then $I = \shift_{i\to j}(I \setminus \{j\} \cup \{i\}) \in \shift_{i\to j}\B(A)$.

If $A_{\pi(j)i} = 0$, then $A_{\pi(j)j} = (\shift_{i \to j}A)_{\pi(j)j} \neq 0$, and so $I \in \B(A)$. Then $I = \shift_{i \to j}I \in \shift_{i\to j}\B(A)$.

\item Suppose $i, j \in I$. Since $(\shift_{i \to j}A)_{\pi(i)i}$ is nonzero, so is $A_{\pi(i)i}$. Therefore if $A_{\pi(j)j} \neq 0$, then $\pi$ is still a transversal of $A$ in columns $I$.

Now suppose $A_{\pi(j)j} = 0$. Then, since $(\shift_{i\to j}A)_{\pi(j)j}$ is nonzero, so is $A_{\pi(j)i}$. Also, since $(\shift_{i \to j}A)_{\pi(i)i}$ is nonzero, so is $A_{\pi(i)j}$. Therefore the bijection $\pi' : I \to [n]$ agreeing with $\pi$ on $I \setminus \{i,j\}$, and having $\pi'(i) = \pi(j)$, $\pi'(j) = \pi(i)$, is a transversal of $A$ in columns $I$.

Either way we see that $I \in \B(A)$, and so $I = \shift_{i\to j}I \in \shift_{i \to j}\B(A)$.
\end{itemize}
\end{proof}

The matrices $M_w$ and $\tilde{M}_w$ defining $\P_w$ and $DM_w$ turn out to be related by a sequence of shifts. Let $\shift_w$ be the composition $\shift_{2n \to w(n)} \cdots \shift_{n+2 \to w(2)} \shift_{n+1 \to w(1)}$.

\begin{lem}[\cite{pawlowski-liu-conjecture-rank-varieties}, Theorem 5.5] \label{lem:matrix-shift} For any permutation $w$, $\shift_w M_w = \tilde{M}_w$. \end{lem}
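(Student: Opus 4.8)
The plan is to follow $M_w$ through the $n$ shifts that make up $\shift_w$ and check that the result has exactly the zero pattern of $\tilde M_w = [A \mid I_n]$ prescribed by the diagram $D(w)$. Since a matrix shift only relocates or deletes entries, applying a sequence of shifts to a generic matrix again gives a generic matrix, so it is enough to track zero patterns. First I would record the shape of $M_w$: writing $M_w = [B \mid C]$ with $B,C$ both $n\times n$ and identifying column $n+j$ of $M_w$ with column $j$ of $C$, the support condition on row $i$ says $B_{ij}\neq 0 \iff j\geq w(i)$ and $C_{ij}\neq 0 \iff i\geq j$, so $C$ is lower triangular with nonzero diagonal.

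Next I would pin down how the shifts in $\shift_w = \shift_{2n\to w(n)}\circ\cdots\circ\shift_{n+1\to w(1)}$ interact. The shift $\shift_{n+k\to w(k)}$ changes only columns $n+k$ and $w(k)$; since $w$ is a permutation and $w(k')\leq n < n+k$ for every $k'$, the pairs $\{n+k, w(k)\}$ are pairwise disjoint as $k$ ranges over $[n]$. Hence the shifts commute, and the entries of columns $n+k$ and $w(k)$ in $\shift_w M_w$ are obtained simply by applying $\shift_{n+k\to w(k)}$ to those two columns of $M_w$. This turns an $n$-fold composition into $n$ independent two-column computations.

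Then I would carry out those computations. From the formula for the matrix shift, after $\shift_{n+k\to w(k)}$ the target column $w(k)$ is zero in row $i$ exactly when it was zero there before and column $n+k$ was also zero there, i.e.\ when $w(i)>w(k)$ and $i<k$ --- precisely the condition $(i,w(k))\in D(w)$. And one checks that the columns $n+1,\dots,2n$ of $\shift_w M_w$ come out as the columns of $I_n$. Because $w$ is a bijection, as $k$ runs over $[n]$ the index $w(k)$ runs over every column of the left-hand block, so the left block of $\shift_w M_w$ becomes a generic matrix whose zero set is exactly $D(w)$ while the right block becomes $I_n$ --- that is, $\shift_w M_w = \tilde M_w$.

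The hard part will be the second half of the two-column step: verifying that the source columns really do collapse to the columns of $I_n$, with nothing stray left behind and no interference between the left and right halves, and at the same time that the left block acquires \emph{exactly} the zeros of $D(w)$ and no others. The disjointness observation is what makes this feasible, since it lets one argue about a single pair of columns at a time rather than unwinding the $n$-fold composition; after that the remaining work is a careful matching of the interval supports of $M_w$ against the combinatorics of the Rothe diagram $D(w)$.
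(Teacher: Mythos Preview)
The paper does not give its own proof of this lemma; it is simply quoted from \cite{pawlowski-liu-conjecture-rank-varieties}. So there is nothing to compare your approach to, only its correctness to assess.

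Your decomposition $M_w=[B\mid C]$, the disjointness/commutativity of the shifts, and the left-block computation are all fine: after $\shift_{n+k\to w(k)}$, entry $(i,w(k))$ is zero iff $B_{i,w(k)}=0$ and $C_{ik}=0$, i.e.\ iff $w(i)>w(k)$ and $i<k$, which is precisely $(i,w(k))\in D(w)$. The gap is exactly the step you flag as ``the hard part'': the source columns do \emph{not} collapse to $I_n$. From the shift rule, entry $(i,n+k)$ of $\shift_w M_w$ is nonzero iff both $C_{ik}\neq 0$ and $B_{i,w(k)}\neq 0$, i.e.\ iff $i\geq k$ and $w(i)\leq w(k)$. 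The diagonal entries $(k,n+k)$ survive, but every inversion $k<i$, $w(k)>w(i)$ leaves an off-diagonal nonzero entry at $(i,n+k)$. For the paper's own example $w=31524$ the right block of $\shift_w M_w$ is
\[
\begin{bmatrix}
* & 0 & 0 & 0 & 0\\
* & * & 0 & 0 & 0\\
0 & 0 & * & 0 & 0\\
* & 0 & * & * & 0\\
0 & 0 & * & 0 & *
\end{bmatrix},
\]
not $I_5$. Thus the zero pattern of $\shift_w M_w$ matches neither $[A\mid I_n]$ (the text's definition of $\tilde M_w$) nor $[I_n\mid A]$ (the displayed example). Whatever Theorem~5.5 of the cited reference actually asserts must use different conventions, or hold only at the level of matroids or rowspans; as a literal equality of generic zero patterns under the definitions written in this paper it is false, and your claim about the right block cannot be repaired.
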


Thus, Lemma~\ref{lem:matroid-shift-containment} shows that $DM_w \subseteq \shift_w \P_w$. Since shifting preserves the size of a collection of sets, we see that Conjecture~\ref{conj:diagram-matroid-basis-count} is equivalent to:

\begin{conj} \label{conj:shifting} $\shift_w \P_w = DM_w$ for any permutation $w$. \end{conj}

\bibliographystyle{plain}
\bibliography{positroid_enumeration_new}

\end{document}